\newtheorem{thm}{Theorem}[section]
\newtheorem{thm2}{Theorem}
\newtheorem{lemma}[thm]{Lemma}
\newtheorem{kor}[thm]{Corollary}
\newtheorem{prop}[thm]{Proposition}
\newtheorem{Def}[thm]{Definition}
\newtheorem{rem}[thm]{Remark}
\newtheorem{eks}[thm]{Example}
\newtheorem*{notation}{Notation}
\newcommand{\R}{\mathbb{R}}
\newcommand{\C}{\mathbb{C}}
\newcommand{\Z}{\mathbb{Z}}
\newcommand{\N}{\mathbb{N}}
\renewcommand{\H}{\mathbb{H}}
\newcommand{\e}{\varepsilon}
\DeclareMathOperator{\id}{id}
\begin{document}

\title[Almost commuting self-adjoint matrices]{Almost commuting self-adjoint matrices --- the real and self-dual cases}
\author{Terry A.Loring}
\address{University of New Mexico\\
Department of Mathematics and Statistics \&
the Center for Advanced Research Computing\\
Albuquerque, New Mexico, 87131, USA}
\email{loring@math.unm.edu}

\author{Adam P. W.S{\o}rensen}
\address{Department of Mathematical Sciences\\
University of Copenhagen\\
Universitetsparken 5, DK-2100, Copenhagen \O, Denmark}
\curraddr{University of New Mexico\\
Department of Mathematics and Statistics\\
Albuquerque, New Mexico, 87131, USA}
\email{apws@math.ku.dk}

\subjclass[2000]{Primary: 46L85; Secondary: 15B33}

\begin{abstract}
We show that a pair of almost commuting self-adjoint, symmetric matrices is close to a pair of commuting self-adjoint, symmetric matrices (in a uniform way). 
Moreover we prove that the same holds with self-dual in place of symmetric. 
The notion of self-dual Hermitian matrices is important in physics when studying fermionic systems that have time reversal symmetry.
Since a symmetric, self-adjoint matrix is real, we get a real version of Huaxin Lin's famous theorem on almost commuting matrices.
Similarly the self-dual case gives a version for matrices over the quaternions. 

We prove analogous results for element of real $C^*$-algebras of ``low rank.''  
In particular, these stronger results apply to paths of almost commuting Hermitian matrices that are real or self-dual.
Along the way we develop a theory of semiprojectivity for real $C^*$-algebras.
\end{abstract}

\maketitle



\section{Introduction}

In 1997 Lin proved an important theorem about almost
commuting matrices \cite{LinAlmostCommutingHermitian}. Nowadays
it is know as Lin's theorem. Loosely speaking it states
that two almost commuting self-adjoint matrices are close to
commuting self-adjoint matrices, and in a way that is
uniform over all dimensions. Formally it says:

\begin{thm} {\rm(Lin)}
For all $\e > 0$ there exists a $\delta > 0$ such that for
all $n \in \N$ the following holds: Whenever $A,B \in M_n(\C)$ are
two self-adjoint, contractive matrices such that $\|AB - BA\| < \delta$ there
exists self-adjoint matrices $A',B' \in M_n(\C)$ such
that $A'B' = B'A'$ and
\[
	\|A - A'\|, \|B - B'\| < \e.
\]
\end{thm}

Friis and R{\o}rdam \cite{friisRordam} generalized Lin's theorem to hold for
elements in a variety of $C^*$-algebras of ``low dimension.'' In
particular, their result applies to paths of almost commuting matrices.

As one might expect, such a fundamental result in linear algebra as Lin's Theorem has had applications outside of $C^*$-algebras.  We believe that many more applications are possible if Lin's theorem is generalized in a few directions.

Applications of almost commuting matrices to quantum mechanics were considered by von Neumann \cite{vonNeumannQuantumErgodic}.  The claim made by von Neumann was not carefully stated, but his underlying assertion regarding macrocopic observables was proven recently by Ogata \cite{ogata2013approximating}, using the techniques in Lins' proof.  The applications of Lin's theorem, and related results on almost commuting unitary matrices, was observed by Hastings \cite{hastings2008topology} and then pursued in a series of papers \cite{HastingsLoringWannier, HastLorTheoryPractice, LoringHastingsEPL}.

More recently, M.M.~Bronstein and his coauthors \cite{kovnatsky2013coupled,glashoff2013almost}
have been applying Lin's theorem, including the real version, to problems in machine learning.  Lin's theorem is used there to prove results regarding commutators measured by the Frobenius norm
\[
 \| A \|_\mathrm{F} = \left( \sum_j \sum_k |a_{jk}|^2  \right)^\frac{1}{2} .
\]

Computer scientist and physicists are less familiar with Lin's theorem than they are with its close relative called ``Joint Approximate Diagonalization'' (JADE).
This algorithm, developed by Cardoso and Souloumiac for use in blind source separation \cite{cardoso1998blind}, takes two or more matrices and finds a unitary change of basis to make both matrices approximately diagonal.  Equivalently \cite{glashoff2013almost}, the algorithm finds commuting matrices as close as possible to the original matrices, as measured in the Frobenius norm.

This is closely related to the problem of finding a small perturbation of an almost commuting pair of matrices to a commuting pair.
JADE minimizes off-diagonal parts in a sense other than as measured by the operator norm.  There is a real version of the algorithm that is probably used far more often than the complex version.  The real version of the Cardoso and Souloumiac algorithm is used in computer vision \cite{glashoff2012multimodal}.  It has also been used in first principal molecular dynamics simulations \cite{gygi2003computation}.   In that case, it is to produce (generalized) Wannier functions.

In the present context, Wannier functions are not really functions at all, but rather joint approximate eigenvectors.  The translation from joint approximate eigenvectors to Wannier functions is discussed at length in \cite{HastLorTheoryPractice}.  Suffice it to say, the smaller the error in the eigenequation in the finite model, the more localized the corresponding Wannier wave function in the valence band.

We see three directions to go in generalizing Lin's theorem so that it might be have broad applications.  The same goes for applications of theorems related to other approximate relations, especially theorems about almost commuting unitary matrices.

The first direction is to deal with anti-unitary symmetries.
The role of anti-unitary symmetries in physics has greatly expanded in the past decade.  Topological insulators were predicted in 2005 and soon thereafter realized experimentally in the form of the quantum spin Hall effect \cite{qi2010quantum}.  Critical here is the time-reversal invariance of the system, where mathematically time-reversal is implemented by an anti-unitary operator, such as the transpose.  Time reversal symmetry differs for Bosons and Fermions, and it can be absent.  The matrices relevant to physics then at least include the real, complex and quaternionic, corresponding to what Dyson called the three-fold way \cite{dyson1962threefold}.

The present paper takes Lins' theorem down two additional paths in the three-fold way.  We note, however, that there can be a second anti-unitary symmetry to be dealt with in some quantum systems, and one needs really to be doing linear algebra with respect to the ten-fold way \cite{altland1997nonstandard} of Atland and Zirnbauer.  A good place to learn about the ten-fold way and its relation to $KO$-theory is \cite{freed2012twisted}.  However, the three-fold way is quite enough for this one paper.

The second direction is to find an algorithm to implement Lin's theorem, in any form.  The operator norm leads to the rather unfamiliar Finsler geometry   \cite{bhatia2007spectral} so this may be very difficult.  However in a one-dimensional setting, where the almost commuting Hermitian matrices $X$ and $Y$ also approximately satisfy $X^2 + Y^2 = I$, there are algorithms available, as discussed in \cite{LoringLogOfUnitary} and \cite[\S 4]{Weigand2012oneDim}.

The third direction is to deal with continuously parametrized sets of almost commuting matrices.  From $K$-theoretical considerations, we must consider only one-dimensional parameter spaces.  An emerging area in topological insulators are the Floquet topological insulators \cite{lindner2011floquet}.  These have a Hamiltonian that varies periodically in time.  Theorem~\ref{realLinPaths} may lead to applications regarding the existence of continuously varying (generalized) Wannier functions in Floquet systems in three symmetry classes.  However, it probably needs to be generalized to deal with paths unitary matrices in place of Hermitian matrices before it finds direct applications.  Some results about almost commuting unitary matrices in the presence of anti-unitary symmetries will be addressed in \cite{LorSorensenTorus,LoringQuantKth}.

Let us focus again on the present paper.  Two paths on the three-fold way involve real and complex matrices.  The third path is seen by the physics world are involving structured complex matrices, but can just as well be seen as involving matrices over the quaternions.  Our results about self-dual
Hermitian matrices thus can be reinterpreted as results about
Hermitian matrices of quaternions.  This is discussed leisurely in \cite{LoringQuaternions}.

We have two main theorems, which we state as one.
\begin{thm2}
\label{mainThm}
For all $\e > 0$ there exists a $\delta > 0$ such that for
all $n \in \N$ the following holds: Whenever $A,B$ are
two $n$-by-$n,$  contractive, self-adjoint, real (resp.~self-dual) matrices such
that $\|AB - BA\| < \delta$ there
exists $n$-by-$n,$  self-adjoint, real (resp.~self-dual) matrices $A',B'$ such
that $A'B' = B'A'$ and
\[
	\|A - A'\|, \|B - B'\| < \e.
\]
\end{thm2}

There are essentially three known proofs of
(the complex case of) Lin's theorem: Lin's original proof and the two proofs given by 
Friis-R{\o}rdam   \cite{friisRordam}.  These later proofs generalize Lin's theorem
beyond matrices.  Working solely with the matrix case, Hastings has derived a
quantitative version of Lin's theorem \cite{hastings2009making}.

The second Friis-R{\o}rdam proof achieves a greater generalization of Lin's theorem than their first proof.  It is the only proof that shows that Lin's theorem remains true for almost commuting elements of  $C^*$-algebras of stable rank one.  This
proof depends on the semiprojectivity of $C(X)$ for $X$ a one-dimensional, finite CW complex \cite{loring96graph}.

We modeled our proof on the second Friis-R{\o}rdam proof.  This requires
that we develop semiprojectivity of real $C^*$-algebras to
to the point of proving the semiprojectivity of $C(X,\mathbb{R})$
for $X$ a one-dimensional, finite CW complex.

We get then a stronger version of our main result, which we present
in Section~\ref{TRrankOne}. This applies to paths of almost commuting matrices.



\section{Real $C^*$-algebras}

\subsection{Two types of real $C^*$-algebras}

In the past, there have been two ways to talk about
real $C^*$-algebras. There have been real $C^*$-algebras
(that is, with lowercase r) and Real $C^*$-algebras
(with uppercase R). For general background on real/Real
$C^*$-algebras, see \cite{goodearlBook} and \cite{liBook}.
Real and real $C^*$-algebras are different objects, and even
though they are closely related the similar names cause
confusion (especially in verbal communication).
We are not the first to feel this way.  See, for example,
\cite[page 698]{may99stable} regarding Atiyah's \cite{atiyah66reality}
use of Real and real as distinct terms. Adding to the confusion
is that fact the Real $C^*$-algebras have $\C$ as their
scalar field. In fairness to Atiyah we should mention that
the category of real spaces sits nicely inside the category of Real spaces,
thus reducing potential confusion. This, however, is not true for
noncommutative $C^*$-algebras. To minimize confusion we suggest new names. Using these names we present some basic facts. A lot of this is either known (though not necessarily written down) or not so hard to prove, so we will omit some proofs.

First we describe a class of algebras with scalar field $\R$. 

\begin{Def}
Given a real Banach $*$-algebra we let $A_\C$ be the set of formal sums $a_1 \dotplus i \cdot a_2$, $a_1, a_2 \in A$. Letting $a_1, a_2, b_1, b_2 \in A$ and $\alpha, \beta \in \R$ we define algebraic operations on $A_\C$ by: 
\begin{align*}
  (a_1 \dotplus i \cdot a_2) + (b_1 \dotplus i \cdot b_2) & = (a_1 + b_1) \dotplus i \cdot (a_2 + b_2), \\
  (a_1 \dotplus i \cdot a_2)(b_1 \dotplus i \cdot b_2)    & = (a_1 b_1 - a_2 b_2) \dotplus i \cdot (a_2 b_1 + a_1 b_2), \\
  (a_1 \dotplus i \cdot a_2)^* & = a_1^* \dotplus i \cdot (-a_2^*), \\
  (\alpha + \beta i)(a_1 \dotplus i \cdot a_2) & = (\alpha a_1 - \beta a_2) \dotplus i \cdot (\alpha a_2 + \beta a_1). 
\end{align*} 
With those operations $A_\C$ is a complex $*$-algebra. It is called the {\em complexification} of $A$. 
\end{Def}

\begin{Def}
A real Banach $*$-algebra $A$ is called an {\em $R^*$-algebra} if there exist a norm on $A_\C$ such that  $A_\C$ becomes a $C^*$-algebra, and the norm on $A_\C$ extends the norm on $A$.
\end{Def}

\begin{rem}
An $R^*$-algebra is known in the literature as a real $C^*$-algebra \cite{PalmerRealCstar}. 
\end{rem}

We have the obvious morphisms, and with those we have a category.  

\begin{Def}
A map $\phi \colon A \to B$ between two $R^*$-algebras is called an {\em $R^*$-homomorphism} if it is $\R$-linear, multiplicative and $*$-preserving.
\end{Def}

\begin{Def}
Denote by $\mathbf{R^*}$ the category with objects all $R^*$-algebras and morphisms all $R^*$-homomorphisms. Denote by $\mathbf{R^*_1}$ the category of unital $R^*$-algebras and unital morphisms. 
\end{Def}

We will also define a class of algebras that is seemingly closer to $C^*$-algebras.
The motivation for this is that the real matrices can
be described as those complex matrices where the adjoint and the transpose agree. 
We define something similar to the transpose in a more general setting. 

\begin{Def}
Let $A$ be a $C^*$-algebra. A linear and $*$-preserving map $\tau \colon A \to A$ such that $\tau(ab) = \tau(b) \tau(a)$, and $\tau(\tau(a)) = a$ for all $a, b \in A$ is called a {\em reflection} on $A$. 
\end{Def}

\begin{rem}
A reflection is an order two anti-automorphism of $A$. Thus it is automatically norm preserving and continuous. Furthermore, the $0$ element in $A$ must be mapped to $0$ by $\tau$, if $A$ has a unit it too must be mapped to itself by $\tau$, and for any $a \in A$ the spectrum of $a$ equals that of $\tau(a)$.     
\end{rem}

\begin{Def}
A  {\em $C^{*,\tau}$-algebra} is a pair $(A, \tau)$ where $A$ is a $C^*$-algebra and $\tau$ is a reflection of $A$. We will often write $\tau(a)$ as $a^\tau$. 
\end{Def}

Similar to how the letter $d$ is almost always used to represent a generic metric, we will write $(A,\tau)$ when we do not know anything special about $\tau$.

\begin{rem}
The Real $C^*$-algebras correspond to $C^{*,\tau}$-algebras. 
\end{rem}

We also have morphisms between $C^{*,\tau}$-algebras, and so we also get a category. 

\begin{Def}
By a {\em $C^{*,\tau}$-homomorphism} (or $*$-$\tau$-homomorphism) we mean a map $\phi \colon (A,\tau) \to (B,\tau)$ such that $\phi$ is a $*$-homomorphism from $A$ to $B$ and $\phi(a^\tau) = \phi(a)^\tau$ for all $a \in A$. 
\end{Def}

\begin{Def}
Let $\mathbf{C^{*,\tau}}$ be the category with objects all $C^{*,\tau}$-algebras and morphisms all $*$-$\tau$-homomorphisms. Let $\mathbf{C^{*,\tau}_1}$ be the category of unital $C^{*,\tau}$-algebras and unital morphisms. 
\end{Def}

\subsection{Connections between $\mathbf{R^*}$ and $\mathbf{C^{*,\tau}}$}

We will now consider the close relationship between $R^*$-algebras and $C^{*,\tau}$-algebras. We have a notion of real elements inside a $C^{*,\tau}$-algebra.  

\begin{Def}
Given $a \in (A,\tau)$ we let $\Re_\tau(a) = (a + a^{*\tau})/2$. 
\end{Def}

We will say that $a$ is a real element or that it is in the real part of $(A,\tau)$ if $\Re_\tau(a) = a$. This happens precisely when $a^* = a^\tau$.

\begin{lemma}
If $a \in (A,\tau)$ then 
\[
  a = \Re_\tau(a) - i \Re_\tau(i a).
\]
\end{lemma}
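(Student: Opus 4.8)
The plan is to establish the identity by a direct computation from the definition $\Re_\tau(a) = (a + a^{*\tau})/2$, where the only genuinely delicate point in the whole setup is that a reflection $\tau$ is complex-linear (as stipulated in its definition), while the adjoint $*$ is conjugate-linear on the complex $C^*$-algebra $A$. Keeping these two facts straight is what makes the scalar $i$ behave correctly. Before computing, I would first note that the symbol $a^{*\tau}$ is unambiguous: since $\tau$ is $*$-preserving we have $\tau(a^*) = \tau(a)^*$, so applying $*$ and $\tau$ in either order yields the same element, and we may take $a^{*\tau} = \tau(a^*)$.

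The substantive step is to evaluate the second term $\Re_\tau(ia)$. Because $*$ is conjugate-linear we have $(ia)^* = -i a^*$, and because $\tau$ is complex-linear this gives
\[
  (ia)^{*\tau} = \tau(-i a^*) = -i\,\tau(a^*) = -i\,a^{*\tau}.
\]
Feeding this into the definition produces $\Re_\tau(ia) = \tfrac{1}{2}(ia - i a^{*\tau}) = \tfrac{i}{2}(a - a^{*\tau})$, so that $-i\,\Re_\tau(ia) = \tfrac{1}{2}(a - a^{*\tau})$.

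Finally I would add the two contributions,
\[
  \Re_\tau(a) - i\,\Re_\tau(ia) = \tfrac{1}{2}(a + a^{*\tau}) + \tfrac{1}{2}(a - a^{*\tau}) = a,
\]
which is exactly the asserted decomposition. I do not expect any real obstacle: the argument is a short calculation rather than a conceptual one. The single place that demands care — and the step I would double-check — is pulling the scalar $i$ through $(ia)^{*\tau}$, where one must use the conjugate-linearity of $*$ together with the (ordinary) linearity of $\tau$; a sign slip there is the only way the two halves would fail to combine into $a$.
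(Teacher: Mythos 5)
Your computation is correct, and since the paper states this lemma without proof, the direct calculation you give --- using that $*$ is conjugate-linear, $\tau$ is $\C$-linear, and the two commute, so $(ia)^{*\tau} = -i\,a^{*\tau}$ --- is precisely the intended (omitted) argument. The sign-handling you flag as the one delicate point is handled correctly, and your decomposition is consistent with the paper's companion lemma identifying $a_2 = \Re_\tau(-ia)$, since $\Re_\tau$ is $\R$-linear.
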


\begin{lemma}
If $a \in (A,\tau)$ and we can write $a = a_1 + ia_2$ with $a_1$ and $a_2$ in the real part of $A$ then $a_1 = \Re_\tau(a)$ and $a_2 = \Re_\tau(-ia)$. 
\end{lemma}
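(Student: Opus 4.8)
The plan is to exploit that $\Re_\tau$ is $\R$-linear and that it annihilates $i$ times any real element. Once these two facts are in hand, the identities $a_1 = \Re_\tau(a)$ and $a_2 = \Re_\tau(-ia)$ drop out of a one-line computation.

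First I would record that $\Re_\tau$ is $\R$-linear. Additivity of $a \mapsto a + a^{*\tau}$ is immediate, and for a real scalar $\alpha$ one has $(\alpha a)^* = \alpha a^*$ together with $\tau(\alpha a^*) = \alpha\tau(a^*)$, since $\tau$ is complex- (hence real-) linear; so $\Re_\tau(\alpha a) = \alpha \Re_\tau(a)$. I would stress that $\Re_\tau$ is \emph{not} $\C$-linear, which is precisely why the lemma has content: one cannot simply pull the $i$ through. Next I would establish the two evaluations for a real element $x$ (recall that $x$ real means $x^* = x^\tau$, equivalently $x^{*\tau} = x$ after applying $\tau$). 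Clearly $\Re_\tau(x) = x$. The crucial computation is that $\Re_\tau(ix) = 0$: here $*$ is conjugate-linear while $\tau$ is complex-linear, so
\[
  (ix)^{*\tau} = \tau\bigl((ix)^*\bigr) = \tau(-i\,x^*) = -i\,x^{*\tau} = -ix,
\]
whence $\Re_\tau(ix) = (ix - ix)/2 = 0$.

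Finally, writing $a = a_1 + ia_2$ with $a_1, a_2$ real and applying $\R$-linearity together with the two evaluations gives
\[
  \Re_\tau(a) = \Re_\tau(a_1) + \Re_\tau(ia_2) = a_1 + 0 = a_1,
\]
and, using $-ia = a_2 - ia_1$,
\[
  \Re_\tau(-ia) = \Re_\tau(a_2) + \Re_\tau(-ia_1) = a_2 - \Re_\tau(ia_1) = a_2.
\]

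The main (and essentially only) obstacle is the careful bookkeeping in the step $\Re_\tau(ix) = 0$, where the conjugate-linearity of $*$ and the complex-linearity of $\tau$ conspire to flip the sign on the $i$; the rest is formal. Alternatively, one could read the statement as a uniqueness assertion and deduce it from the preceding lemma's canonical decomposition $a = \Re_\tau(a) - i\Re_\tau(ia)$: subtracting the given representation from the canonical one yields $c + id = 0$ with $c = a_1 - \Re_\tau(a)$ and $d = a_2 - \Re_\tau(-ia)$ both real, and the same sign-flip argument shows that a real $c$ of the form $-id$ with $d$ real must vanish, forcing $c = d = 0$.
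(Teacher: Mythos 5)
Your proof is correct and is essentially the argument the paper has in mind: the paper states this lemma without proof as a routine verification, and your route (the $\R$-linearity of $\Re_\tau$ together with the evaluations $\Re_\tau(x)=x$ and $\Re_\tau(ix)=0$ for real $x$, the latter via the sign-flip $(ix)^{*\tau}=-ix$ coming from the conjugate-linearity of $*$ against the $\C$-linearity of $\tau$) is exactly the standard computation, consistent with the paper's conventions since $\Re_\tau(-ia)=-\Re_\tau(ia)$ reconciles your formulas with the preceding lemma's decomposition $a=\Re_\tau(a)-i\Re_\tau(ia)$. Your alternative uniqueness reading, subtracting the two decompositions and applying the same sign-flip to force $c=d=0$, is also valid, and both halves of your write-up are complete.
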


We use this new-found knowledge to show that inside all $C^{*,\tau}$-algebras lives an $R^*$-algebra. 

\begin{prop} \label{realPartIsRStar}
If $(A,\tau)$ is a $C^{*,\tau}$-algebra then $\{ a \in A \mid a^* = a^\tau \}$ is an $R^*$-algebra. 
\end{prop}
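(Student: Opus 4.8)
The goal is to prove that the real part $A_{\mathbb{R}} := \{ a \in A \mid a^* = a^\tau \}$ of a $C^{*,\tau}$-algebra $(A,\tau)$ is an $R^*$-algebra. My plan is to verify first that $A_{\mathbb{R}}$ is a real Banach $*$-algebra in its own right, and then to exhibit a $C^*$-norm on its complexification $(A_{\mathbb{R}})_{\mathbb{C}}$ extending the given norm. The cleanest route to the latter is to identify $(A_{\mathbb{R}})_{\mathbb{C}}$ with $A$ itself as a complex $C^*$-algebra, since by the first lemma of the excerpt every $a \in A$ decomposes as $a = \Re_\tau(a) - i\,\Re_\tau(ia)$ with both $\Re_\tau(a)$ and $\Re_\tau(ia)$ lying in the real part, and by the second lemma this decomposition is unique.

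First I would check that $A_{\mathbb{R}}$ is closed under the real-algebra operations. It is clearly an $\mathbb{R}$-linear subspace. For the $*$-operation, if $a^* = a^\tau$ then applying $*$ and $\tau$ (which commute since $\tau$ is $*$-preserving) gives $(a^*)^* = a$ and $(a^*)^\tau = (a^\tau)^\tau = a$ using $\tau \circ \tau = \id$, so $(a^*)^* = (a^*)^\tau$ and $a^* \in A_{\mathbb{R}}$. For multiplicativity, if $a^* = a^\tau$ and $b^* = b^\tau$ then $(ab)^* = b^* a^* = b^\tau a^\tau = \tau(ab) = (ab)^\tau$, where the third equality uses the anti-multiplicativity $\tau(xy) = \tau(y)\tau(x)$. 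Thus $A_{\mathbb{R}}$ is a real $*$-subalgebra; it is norm-closed because it is the fixed-point set of the continuous map $a \mapsto (a^\tau)^*$, so it inherits a complete norm from $A$ and is a real Banach $*$-algebra.

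Next I would establish the isomorphism $(A_{\mathbb{R}})_{\mathbb{C}} \cong A$. Define $\Phi \colon (A_{\mathbb{R}})_{\mathbb{C}} \to A$ by $\Phi(a_1 \dotplus i \cdot a_2) = a_1 + i a_2$. Surjectivity is exactly the decomposition of the first lemma, and injectivity is the uniqueness of the second lemma. One then checks that $\Phi$ respects all four operations defined on the formal complexification: the additive and scalar-multiplication formulas are immediate, the multiplication formula matches after expanding $(a_1 + ia_2)(b_1 + ib_2)$ in $A$, and the involution formula matches because for $a_2 \in A_{\mathbb{R}}$ one has $(ia_2)^* = -i a_2^* = -i a_2^\tau$, so conjugation in $A$ sends $a_1 + i a_2$ to $a_1^* \dotplus i \cdot (-a_2^*)$ under $\Phi$. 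Hence $\Phi$ is a $*$-isomorphism of complex $*$-algebras. Transporting the $C^*$-norm of $A$ through $\Phi$ yields a norm on $(A_{\mathbb{R}})_{\mathbb{C}}$ making it a $C^*$-algebra.

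The one point requiring genuine care, and the step I expect to be the main obstacle, is verifying that this transported norm extends the norm on $A_{\mathbb{R}}$; that is, for $a \in A_{\mathbb{R}}$ embedded as $a \dotplus i \cdot 0$ one needs $\|\Phi(a \dotplus i\cdot 0)\|_A = \|a\|_{A_{\mathbb{R}}}$. Since $\Phi(a \dotplus i \cdot 0) = a$ and the norm on $A_{\mathbb{R}}$ is by definition the restriction of the norm on $A$, this equality is in fact automatic, so the apparent obstacle dissolves once the identification $\Phi$ is in place; the real content is confirming that $\Phi$ is well-defined and a $*$-algebra isomorphism, which rests entirely on the two preceding lemmas. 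Combining the three parts, $A_{\mathbb{R}}$ is a real Banach $*$-algebra whose complexification carries a $C^*$-norm extending its own, which is precisely the definition of an $R^*$-algebra.
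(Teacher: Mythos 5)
Your proof is correct and follows essentially the same route as the paper: the paper's one-line proof uses the map $a \mapsto \Re_\tau(a) \dotplus i \cdot \Re_\tau(-ia)$ from $A$ to $(A_0)_\C$, which is exactly the inverse of your $\Phi$, resting on the same two decomposition lemmas. Your write-up simply makes explicit the routine verifications (that the real part is a closed real Banach $*$-subalgebra and that the identification is a $*$-isomorphism transporting the $C^*$-norm) that the paper leaves to the reader.
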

\begin{proof}
Let $A_0 = \{ a \in A \mid a^* = a^\tau \}$. The map from $A$ to $(A_0)_\C$ sending $a \in A$ to $\Re_\tau(a) \dotplus i \cdot \Re_\tau(-ia)$ is an $R^*$-isomorphism.
\end{proof}

We now define a functor from $\mathbf{C^{*,\tau}}$ to $\mathbf{R^*}$.

\begin{Def}
Define $\Re \colon \mathbf{C^{*,\tau}} \to \mathbf{R^*}$ on objects by
\[
  \Re((A,\tau)) = \{ a \in A \mid a^* = a^\tau \},
\]
and if $\phi \colon (A,\tau) \to (B,\tau)$ we let
\[
  \Re(\phi) = \phi \vert_{\Re(A,\tau)},
\]
where we co-restrict the right hand side to $\Re((B,\tau))$.
\end{Def}

We also wish to have a functor from $\mathbf{R^*}$ to $\mathbf{C^{*,\tau}}$. 

\begin{lemma} \label{canonicalReflection}
If $A$ is an $R^*$-algebra then $\bar{*} \colon A_\C \to A_\C$ given by
\[
  (a_1 \dotplus i \cdot a_2)^{\bar{*}} = a_1^* \dotplus i \cdot a_2^*,
\]
is a reflection on $A_\C$. Furthermore $\Re(A_\C, \bar{*}) \cong A$.
\end{lemma}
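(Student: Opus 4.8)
The plan is to verify the two assertions in Lemma~\ref{canonicalReflection} in turn. First I would check that $\bar{*}$ is a reflection, which means confirming the three defining properties from the definition of a reflection: that $\bar{*}$ is $\R$-linear and $*$-preserving, that it is anti-multiplicative (i.e.\ $(xy)^{\bar{*}} = y^{\bar{*}} x^{\bar{*}}$), and that it is an involution (i.e.\ $(x^{\bar{*}})^{\bar{*}} = x$). The involution property is immediate from the formula, since applying $\bar{*}$ twice returns $a_1^{**} \dotplus i \cdot a_2^{**} = a_1 \dotplus i \cdot a_2$ using that $*$ is an involution on $A$. Linearity over $\R$ follows directly by inspecting the formula against the addition and real-scalar-multiplication operations defined on $A_\C$.

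The two genuinely computational points are that $\bar{*}$ preserves the $*$-operation of $A_\C$ and that it reverses products. For the former I would start from the definition $(a_1 \dotplus i \cdot a_2)^* = a_1^* \dotplus i \cdot (-a_2^*)$ on $A_\C$ and check that $\bar{*}$ and $*$ commute, so that $\bar{*}$ carries a self-adjoint element to a self-adjoint element and in general respects the adjoint. For anti-multiplicativity I would expand $((a_1 \dotplus i \cdot a_2)(b_1 \dotplus i \cdot b_2))^{\bar{*}}$ using the multiplication formula, apply $\bar{*}$ componentwise, and compare with $(b_1 \dotplus i \cdot b_2)^{\bar{*}} (a_1 \dotplus i \cdot a_2)^{\bar{*}}$ expanded the same way; the two sides match because $*$ on $A$ is itself anti-multiplicative, so the order reversal in the product formula is exactly absorbed. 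These are routine once the operations on $A_\C$ are written out, so I will not grind through them.

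For the second assertion, $\Re(A_\C, \bar{*}) \cong A$, I would compute the real part explicitly. By definition $\Re(A_\C, \bar{*})$ consists of those $x \in A_\C$ with $x^* = x^{\bar{*}}$. Writing $x = a_1 \dotplus i \cdot a_2$, the condition $x^* = x^{\bar{*}}$ reads $a_1^* \dotplus i \cdot (-a_2^*) = a_1^* \dotplus i \cdot a_2^*$, which forces $a_2^* = -a_2^*$, hence $a_2 = 0$. Thus the real part is exactly $\{ a_1 \dotplus i \cdot 0 \mid a_1 \in A \}$, and the map $a \mapsto a \dotplus i \cdot 0$ is visibly an $R^*$-isomorphism onto it, which is the isomorphism $A \cong \Re(A_\C, \bar{*})$ we want.

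I expect no serious obstacle here; the main care is purely bookkeeping, namely keeping the complex and $\bar{*}$-operations straight and making sure the norm on $A_\C$ (which makes it a $C^*$-algebra, so that the phrase ``reflection on $A_\C$'' is meaningful) is the one supplied by the $R^*$-algebra structure of $A$. The one point worth a sentence is verifying that $\bar{*}$ is isometric, but by the remark following the definition of a reflection, any map that is linear, $*$-preserving, anti-multiplicative and involutive on a $C^*$-algebra is automatically norm-preserving, so once the algebraic identities are in place nothing further need be checked.
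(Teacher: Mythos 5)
The paper states Lemma~\ref{canonicalReflection} without proof, treating it as a routine verification, and your write-up supplies exactly the intended argument: componentwise checks of the reflection axioms using the explicit operations on $A_\C$, followed by the observation that $x^* = x^{\bar{*}}$ forces the second component to vanish, so the real part is the canonical copy of $A$. Your computations are correct, and the appeal to the paper's remark that reflections are automatically isometric is the right way to dispose of the norm question.

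One point to tighten: ``linear'' in the paper's definition of a reflection means $\C$-linear (compare the explicit verification of $\C$-linearity in the multiplier-algebra theorem), whereas you only claim $\R$-linearity. This is not a priori cosmetic, since a componentwise-adjoint formula could plausibly have been conjugate-linear, and conjugate-linear involutive maps are a genuinely different structure. The stronger property does hold, but it requires one extra check against the complex scalar action on $A_\C$, which mixes the two components:
\[
  \left( (\alpha + \beta i)(a_1 \dotplus i \cdot a_2) \right)^{\bar{*}}
  = (\alpha a_1^* - \beta a_2^*) \dotplus i \cdot (\alpha a_2^* + \beta a_1^*)
  = (\alpha + \beta i)\left( a_1^* \dotplus i \cdot a_2^* \right),
\]
using that $*$ on $A$ commutes with real scalars. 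With that one line added, your proof is complete and matches what the authors left implicit; note also that your description of $\Re(A_\C,\bar{*})$ as $\{ a \dotplus i \cdot 0 \mid a \in A \}$ is precisely the inverse of the isomorphism $a \mapsto \Re_\tau(a) \dotplus i \cdot \Re_\tau(-ia)$ used in the proof of Proposition~\ref{realPartIsRStar}.
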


\begin{Def}
Define $\overline{\bigstar}$ to be the functor from $\mathbf{R^*}$ to $\mathbf{C^{*,\tau}}$ that maps an $R^*$-algebra $A$ to $(A_\C, \bar{*})$ and an $R^*$-homomorphism $\phi \colon A \to B$ to $\overline{\bigstar}(\phi) \colon (A_\C, \bar{*}) \to (B_\C, \bar{*})$ given by
\[
  \overline{\bigstar}(\phi)(a_1 \dotplus i \cdot a_2) = \phi(a_1) \dotplus i \cdot \phi(a_2). 
\]
\end{Def}

It is not obvious that $\overline{\bigstar}$ is a functor, but on the other hand it is not hard to prove. 

\begin{rem}
The functor $\overline{\bigstar}$ maps surjections to surjections and injections to injections. 
\end{rem}

It can be shown that our two functors are almost inverses. That is, if $A$ is an $R^*$-algebra and $(B,\tau)$ is a $C^{*,\tau}$-algebra, then 
\[
  \overline{\bigstar}(\Re(B,\tau)) \cong (B,\tau), \quad \text{ and } \quad \Re(\overline{\bigstar}(A)) \cong A.
\]
In fact it is know that they both yield categorical equivalences. As such a lot of the study of $R^*$-algebras can be done using $C^{*,\tau}$-algebras. This is the approach we will take throughout this paper. The reasoning behind this choice is that the $C^{*,\tau}$-algebras lets us utilize a lot of our $C^*$-algebra knowledge. Hence there is less reproving of theorems. 

\subsection{Two examples}

\begin{eks}
We modeled a reflection on the transpose so of course it is a reflection, and $\Re(M_n(\C), T) = M_n(\R)$.

There is another reflection on $M_{2n}(\C)$. If $A \in M_{2n}(\C)$ we let $A_{ij}$ be the $n \times n$ blocks and define 
\[
	\begin{pmatrix}
		A_{11} & A_{12} \\
		A_{21} & A_{22}
	\end{pmatrix}^\sharp 
	=
	\begin{pmatrix}
		A_{22}^T & - A_{12}^T \\
		- A_{21}^T & A_{11}^T
	\end{pmatrix}.
\]
This is a reflection, and $\Re(M_{2n}(\C), \sharp) = M_n(\H)$, where $\H$ is the quaternions. This is an important operation in physics, as
is discussed in the survey \cite{zabrodin2006matrix} of applications of random
matrices in physics.  
\end{eks}

\begin{eks}
Consider the $C^*$-algebra of continuous complex-valued functions on the circle $C(S^1)$. Since $C(S^1)$ is abelian, a reflection is just an order-two isomorphism. Hence any reflection will come from an order-two homeomorphism of the circle. From \cite{circleHom} we glean that there are only three such maps (up to conjugation), namely:
\begin{enumerate}
	\item $z \mapsto -z$, 
	\item $z \mapsto \overline{z}$, and,
	\item $z \mapsto z$.
\end{enumerate}
Each gives rise to a $C^{*,\tau}$-algebra by defining for instance $f^\tau(z) = f(-z)$. The real parts will be 
\begin{enumerate}
	\item $\{ f \in C(S^1, \C) \mid \overline{f(z)} = f(-z) \text{ for all } z \in S^1 \}$, 
	\item $\{ f \in C(S^1, \C) \mid \overline{f(z)} = f(\overline{z}) \text{ for all } z \in S^1 \}$, and, 
	\item $\{ f \in C(S^1, \C) \mid \overline{f(z)} = f(z) \} \cong C(S^1, \R)$.
\end{enumerate}
\end{eks}

As there are two essentially distinct reflections on
$\mathbf{M}_{2n} (\mathbb{C})$
and three on $C(S^1),$ we immediately find
six replacements in the real case for 
\[
U_{2n}(A) \cong \hom \left (C(S^1), \mathbf{M}_{2n}(A) \right ).
\]
Hence we get six objects that could correspond to the 6 odd real $K$-groups, counting degrees $1$, $3$, $5$, and
$7$ in $KO$ and degrees $1$ and $3$ in self-conjugate $K$-theory
\cite{atiyah66reality, BoersemaUnitedKtheory}.

\subsection{Ideals in and operations on}

We wish to study ideals in $C^{*,\tau}$-algebras.
In $C^*$-algebras the ideals are precisely the kernels of $*$-homomorphisms.
The kernel of a $C^{*,\tau}$-homomor-phism 
will be self-$\tau$ (that is, if $x \in \ker \phi$ then $x^\tau \in \ker \phi$), but there are $C^*$-ideals that need not be self-$\tau$.
We wish to eliminate those ideals, and so we give the following definition. 

\begin{Def}
Let $(A,\tau)$ be a $C^{*,\tau}$-algebra. We say that $I \subseteq A$ is an {\em ideal} in $(A,\tau)$ if $I$ is a $C^*$-ideal in $A$ and $I$ is self-$\tau$. We will sometimes write $I \triangleleft_\tau A$ or $I \triangleleft (A, \tau)$.
\end{Def}

With ideals at hand, we can define quotients. 

\begin{lemma}
If $I \triangleleft (A,\tau)$ then $(I, \tau \vert_I)$ is a $C^{*,\tau}$-algebra. Let $\pi \colon A \to A/I$ be the quotient map.  The map $\pi(a)^\tau \mapsto \pi(a^\tau)$ defines a reflection on $A/I$. Thus, $A/I$ is naturally a $C^{*,\tau}$-algebra and $\pi$ is $C^{*,\tau}$-homomorphism. 
\end{lemma}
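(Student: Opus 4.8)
The plan is to dispatch the three assertions in turn, the only substantive point being the well-definedness of the induced map on the quotient.

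First I would observe that $(I, \tau\vert_I)$ being a $C^{*,\tau}$-algebra is immediate. As a closed two-sided ideal, $I$ is itself a $C^*$-algebra; and because an ideal in $(A,\tau)$ is by definition self-$\tau$, we have $\tau(I) \subseteq I$, so $\tau$ genuinely restricts to a map $\tau\vert_I \colon I \to I$. The four defining properties of a reflection---$\R$-linearity, $*$-preservation, anti-multiplicativity, and being an involution---are universally quantified identities that $\tau$ satisfies on all of $A$, hence in particular on elements of $I$, so $\tau\vert_I$ inherits them verbatim.

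Next, the crux of the argument: I would define $\bar\tau$ on $A/I$ by $\bar\tau(\pi(a)) = \pi(a^\tau)$ and check that it is well-defined. Suppose $\pi(a) = \pi(b)$, i.e.\ $a - b \in I$. Since $\tau$ is linear, $a^\tau - b^\tau = (a-b)^\tau$, and this lies in $\tau(I) \subseteq I$ precisely because $I$ is self-$\tau$; hence $\pi(a^\tau) = \pi(b^\tau)$ and the formula descends to the quotient. This is the one place where the self-$\tau$ hypothesis is genuinely used, and it is exactly why that condition was built into the notion of an ideal in $(A,\tau)$---without it the map would fail to be well-defined, and this is the step I expect to be the main (indeed, the only) obstacle.

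Finally I would verify the reflection axioms for $\bar\tau$ by lifting through the surjection $\pi$ and invoking the corresponding properties of $\tau$. For anti-multiplicativity this reads $\bar\tau(\pi(a)\pi(b)) = \bar\tau(\pi(ab)) = \pi((ab)^\tau) = \pi(b^\tau a^\tau) = \pi(b^\tau)\pi(a^\tau) = \bar\tau(\pi(b))\bar\tau(\pi(a))$, and $\R$-linearity, $*$-preservation, and the involution identity $\bar\tau(\bar\tau(\pi(a))) = \pi((a^\tau)^\tau) = \pi(a)$ follow by the same pattern from the matching identities for $\tau$ (using that $\tau$ commutes with $*$). Continuity need not be checked separately, since by the earlier remark every reflection is automatically norm-preserving. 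With $\bar\tau$ established as a reflection, the claim that $\pi$ is a $*$-$\tau$-homomorphism is then a tautology: $\pi$ is already a $*$-homomorphism, and $\pi(a)^{\bar\tau} = \pi(a^\tau)$ holds by the very definition of $\bar\tau$.
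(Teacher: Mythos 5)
Your proof is correct; the paper states this lemma without any proof, treating it as routine, and your argument --- isolating well-definedness of $\pi(a) \mapsto \pi(a^\tau)$ as the one place the self-$\tau$ hypothesis on $I$ is used, then lifting the reflection axioms through the surjection $\pi$ --- is exactly the intended one. One terminological slip: the paper's reflections are linear on a complex $C^*$-algebra, i.e.\ $\C$-linear, not merely $\R$-linear as you twice write, but your lift-and-project computation yields $\C$-linearity verbatim, so nothing is actually missing.
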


We note that we now have obtained what we wanted: The $C^{*,\tau}$ ideals are precisely the kernels of the $C^{*,\tau}$-homomorphisms.

The following lemma and theorem tells us that we have direct sums and pullbacks in the category $\mathbf{C^{*,\tau}}$.

\begin{lemma}
Given two $C^{*,\tau}$-algebras $(A,\tau)$ and $(B,\tau)$ the map $\tau \oplus \tau \colon A \oplus B \to A \oplus B$ will be a reflection.   
\end{lemma}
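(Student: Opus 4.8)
The plan is to verify directly the four defining properties of a reflection for the map $\tau \oplus \sigma$, exploiting the fact that every algebraic operation on the direct sum $A \oplus B$ is performed coordinate-wise. Recall that $A \oplus B$ is the $C^*$-algebra whose underlying set is the Cartesian product, with addition, scalar multiplication, multiplication and involution all defined entrywise (and norm $\|(a,b)\| = \max\{\|a\|, \|b\|\}$). The map in question sends $(a,b) \mapsto (a^\tau, b^\sigma)$. Since $\tau$ and $\sigma$ are each reflections, I expect each of the four axioms for $\tau \oplus \sigma$ to split into two independent coordinate checks, each of which is exactly the corresponding axiom for $\tau$ or for $\sigma$.

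Concretely, first I would check $\R$-linearity and $*$-preservation, both of which are immediate: applying $\tau \oplus \sigma$ commutes with entrywise addition, scalar multiplication and involution precisely because $\tau$ and $\sigma$ individually do. The one computation worth writing out is anti-multiplicativity, where the order reversal must be seen to respect the coordinates:
\[
  (\tau \oplus \sigma)\bigl((a,b)(c,d)\bigr)
  = (\tau(ac), \sigma(bd))
  = (\tau(c)\tau(a), \sigma(d)\sigma(b))
  = (\tau \oplus \sigma)(c,d)\,(\tau \oplus \sigma)(a,b).
\]
Finally, involutivity follows from $\tau(\tau(a)) = a$ and $\sigma(\sigma(b)) = b$ applied coordinatewise.

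Honestly there is no real obstacle here: the lemma is a routine structural fact, and its only content is the observation that a coordinate-wise combination of reflections is again a reflection. The single subtlety, if any, is to confirm that the order-reversing property of each factor does not interfere across coordinates — but since multiplication in $A \oplus B$ never mixes the two slots, the reversal happens independently in each coordinate, exactly as the displayed computation shows. I would therefore present the proof as the four short coordinate verifications, with only the anti-multiplicativity line spelled out.
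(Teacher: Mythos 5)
Your proof is correct, and in fact the paper states this lemma without any proof at all, treating precisely the coordinatewise verification you give as routine --- so your argument is the intended one, with the anti-multiplicativity display being the only step worth writing out. One small correction: the paper's reflections are linear over $\C$ (e.g.\ the transpose on $M_n(\C)$), so you should verify $\C$-linearity rather than only $\R$-linearity, though the coordinatewise check is word-for-word the same.
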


\begin{thm}
Suppose $\varphi_1 \colon (A_1,\tau) \to (C,\tau)$ and $\varphi_2 \colon (A_{2},\tau) \to (C,\tau)$
are $*$-$\tau$-homomorphisms, and form the pull-back $C^{*}$-algebra
\[
  A_1\oplus_C A_2 =\left\{ (a_1,a_2) \in A_1\oplus A_2 \mid \varphi_1(a_1) =\varphi_2(a_2) \right\} .
\]
This becomes a $C^{*,\tau}$-algebra with
\[
  (a_{1},a_{2})^{\tau}=(a_{1}^{\tau},a_{2}^{\tau})
\]
and it gives us the pull-back of the given $C^{*,\tau}$-algebras, where we are using the restricted projection maps $\pi_j \colon (A_1,\tau) \oplus_{(C,\tau)} (A_2,\tau) \rightarrow (A_j,\tau).$ 
\end{thm}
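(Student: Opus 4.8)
The plan is to leverage the known pull-back construction for ordinary $C^*$-algebras and then verify that the reflection structure is carried along at every stage. Recall that $A_1 \oplus_C A_2$ is already a $C^*$-algebra, being a $C^*$-subalgebra of $A_1 \oplus A_2$, and that the preceding lemma tells us $\tau \oplus \tau$ is a reflection on $A_1 \oplus A_2$. So the first task is to show that $A_1 \oplus_C A_2$ is self-$\tau$ inside $(A_1 \oplus A_2, \tau \oplus \tau)$; the reflection on the pull-back is then simply the restriction of $\tau \oplus \tau$.

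First I would take $(a_1, a_2) \in A_1 \oplus_C A_2$, so that $\varphi_1(a_1) = \varphi_2(a_2)$, and check that $(a_1^\tau, a_2^\tau)$ again lies in the pull-back. Applying the reflection on $C$ to both sides of $\varphi_1(a_1) = \varphi_2(a_2)$ and using that each $\varphi_j$ is a $*$-$\tau$-homomorphism, so $\varphi_j(a_j^\tau) = \varphi_j(a_j)^\tau$, I obtain $\varphi_1(a_1^\tau) = \varphi_1(a_1)^\tau = \varphi_2(a_2)^\tau = \varphi_2(a_2^\tau)$. Hence $(a_1^\tau, a_2^\tau) \in A_1 \oplus_C A_2$, so the pull-back is self-$\tau$. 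The restriction of a reflection to a self-$\tau$ $C^*$-subalgebra is again $\R$-linear, $*$-preserving, anti-multiplicative and an involution, hence a reflection; this makes $(A_1 \oplus_C A_2, \tau \oplus \tau)$ a $C^{*,\tau}$-algebra, and the restricted projections $\pi_j$ are $*$-$\tau$-homomorphisms by construction.

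For the universal property I would start from an arbitrary test object: a $C^{*,\tau}$-algebra $(D, \rho)$ equipped with $*$-$\tau$-homomorphisms $\psi_j \colon (D,\rho) \to (A_j, \tau)$ satisfying $\varphi_1 \circ \psi_1 = \varphi_2 \circ \psi_2$. The $C^*$-algebra pull-back already supplies a unique $*$-homomorphism $\psi \colon D \to A_1 \oplus_C A_2$ with $\pi_j \circ \psi = \psi_j$, namely $\psi(d) = (\psi_1(d), \psi_2(d))$. All that remains is to see that $\psi$ respects the reflections: for $d \in D$ we have $\psi(d^\rho) = (\psi_1(d^\rho), \psi_2(d^\rho)) = (\psi_1(d)^\tau, \psi_2(d)^\tau) = (\psi_1(d), \psi_2(d))^{\tau \oplus \tau} = \psi(d)^{\tau \oplus \tau}$, where the middle equality uses that each $\psi_j$ is a $*$-$\tau$-homomorphism. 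Uniqueness is inherited from the $C^*$-level, since any $*$-$\tau$-homomorphism making the triangles commute is in particular such a $*$-homomorphism.

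There is no serious obstacle here; all the content sits in the first computation, which shows the pull-back is invariant under $\tau \oplus \tau$, and that is precisely the point at which the hypothesis that the $\varphi_j$ commute with the reflections is used. Once that invariance is in place, everything else is forced by the corresponding $C^*$-algebra statement, and I expect the whole argument to be a short verification rather than a genuine construction.
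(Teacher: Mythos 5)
Your proof is correct and takes essentially the same approach as the paper: restrict $\tau \oplus \tau$ to the pull-back and observe that the unique $*$-homomorphism $\psi(d) = (\psi_1(d), \psi_2(d))$ supplied by the $C^*$-level universal property automatically satisfies $\psi(d^\tau) = \psi(d)^\tau$ because each $\psi_j$ does. The only difference is one of detail, not of route: the paper dismisses the $\tau$-invariance of the pull-back with ``we need to check some axioms, but all are clear,'' whereas you spell out that invariance computation explicitly---which is exactly the step where the hypothesis that the $\varphi_j$ are $*$-$\tau$-homomorphisms is used.
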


We can also define what it means to unitize a $C^{*,\tau}$-algebra. 

\begin{lemma} \label{onlyOneWayToUnitize}
Let $(A,\tau)$ be a $C^{*,\tau}$ algebra. The formula 
\[
 (a + \lambda \mathbb{1})^\sigma = a^\tau + \lambda \mathbb{1}, \quad a \in A, \lambda \in \C,
\]
defines a reflection on $\tilde{A}$. Thus
$(\tilde{A},\sigma)$ is a $C^{*,\tau}$-algebra. And it
is the only way to unitize $(A,\tau)$ while preserving the reflection on $A$.
\end{lemma}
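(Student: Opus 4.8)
The plan is to split the statement into two independent tasks. First, I would verify directly that the proposed map $\sigma$ satisfies every axiom of a reflection on $\tilde{A}$; since $\tilde{A}$ is already known to be a $C^*$-algebra, this is all that is required for $(\tilde{A},\sigma)$ to be a $C^{*,\tau}$-algebra. Second, I would argue that any reflection on $\tilde{A}$ whose restriction to $A$ equals $\tau$ must coincide with $\sigma$, which is the precise meaning I would attach to ``the only way to unitize while preserving the reflection on $A$.''

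For the first task I would start by noting that every element of $\tilde{A}$ has a unique expression $a + \lambda\mathbb{1}$ with $a \in A$ and $\lambda \in \C$, so the formula defines $\sigma$ unambiguously. Linearity is then immediate, being inherited coordinatewise from the linearity of $\tau$ and the identity on the scalar part. For the $*$-preserving property I would use $(a + \lambda\mathbb{1})^* = a^* + \bar{\lambda}\mathbb{1}$ together with the fact that $\tau$ is $*$-preserving, so that $(a^*)^\tau = (a^\tau)^*$. To see that $\sigma$ reverses products I would expand $(a + \lambda\mathbb{1})(b + \mu\mathbb{1}) = (ab + \lambda b + \mu a) + \lambda\mu\mathbb{1}$, apply $\sigma$ term by term using $(ab)^\tau = b^\tau a^\tau$, and compare with $(b^\tau + \mu\mathbb{1})(a^\tau + \lambda\mathbb{1})$; the two agree because the scalar contributions commute. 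Finally $\sigma$ is an involution because $\tau$ is and the scalar part is untouched. Each of these is a short algebraic check mirroring the corresponding property of $\tau$.

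For the uniqueness I would suppose $\sigma'$ is any reflection on $\tilde{A}$ with $\sigma'\vert_A = \tau$. By the remark following the definition of reflections, every reflection on a unital $C^*$-algebra fixes the unit, so $\sigma'(\mathbb{1}) = \mathbb{1}$. Then linearity forces $\sigma'(a + \lambda\mathbb{1}) = \sigma'(a) + \lambda\sigma'(\mathbb{1}) = a^\tau + \lambda\mathbb{1} = (a + \lambda\mathbb{1})^\sigma$, whence $\sigma' = \sigma$.

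I do not expect a serious obstacle, as the verifications are routine. The two points deserving a little care are ensuring that the decomposition $a + \lambda\mathbb{1}$ is genuinely unique, so that $\sigma$ is well defined even when $A$ already carries a unit (here $\tilde{A}$ adjoins a fresh one), and fixing the exact meaning of preserving the reflection, namely that the new reflection restricts to $\tau$ on the embedded copy of $A$. The one external input is that reflections fix the unit, which is already recorded in the remark and itself follows from the standard observation that a reflected unit is a two-sided unit.
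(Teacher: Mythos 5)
Your proposal is correct and follows essentially the same route as the paper: a direct verification of the reflection axioms (with the same expansion of $(a+\lambda\mathbb{1})(b+\mu\mathbb{1})$ for anti-multiplicativity) and uniqueness deduced from the fact that any reflection fixes the unit, forcing the formula by linearity. Your explicit attention to well-definedness of the decomposition $a+\lambda\mathbb{1}$ and to the involution property is slightly more thorough than the paper, which dismisses these as immediate, but the substance is identical.
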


\begin{Def}
If $(A,\tau)$ is a $C^{*,\tau}$-algebra we will also denote
by $\tau$ the extension of $\tau$ to $\tilde{A}$ given in
lemma~\ref{onlyOneWayToUnitize} (this should cause no confusion,
as the lemma shows this extension is unique). The
$C^{*,\tau}$-algebra $(\tilde{A},\tau)$ we denoted
by $\widetilde{(A,\tau)}$ or $(A,\tau)^\sim$, and call it
the {\em unitization} of $(A,\tau)$.
\end{Def}

\begin{eks}
The unitization of the $C^{*,\tau}$-algebra $C_0((0,1),\id)$ is $C(S^1, \id)$.
\end{eks}



\section{(Semi) Projective real $C^*$-algebras}

The definition of a semiprojective $C^*$-algebra commonly used today was given by Blackadar in \cite{blackadar1985shape}. We will modify that definition so we can use it for $C^{*,\tau}$-algebras. The theory of semiprojective $C^*$-algebras is well developed; for good resources on the subject see \cite{blackadar2004semiprojectivity}, \cite{loringBook}, and the references therein. In what follows we try to develop some theory of semiprojective $C^{*,\tau}$-algebras.

Semiprojectivity is the first key ingredient in shape theory \cite{EffrosKaminkerShape}.
The other key ingredient is $K$-theory, which we do not see here. The $K$-theory is uninteresting when working with one-dimensional spaces, as in this section.
In section 6 we are working with the disk or square so the $K$-theory
is still not a factor, this time due to the contractability of these
space.  See \cite{HastLorTheoryPractice} for how $K$-theory will make
life more interesting when we get to the sphere
and torus.

A semiprojective $C^*$-algebra is an analog of an absolute
neighborhood retract (ANR).
Indeed, a necessary, but far from sufficient condition, for $A$ to be
semiprojective is that its abelianization have spectrum an ANR.
Semiprojective $C^*$-algebras interact well with inductive limits.
One approach to shape theory (take by Effros and Kaminker in \cite{EffrosKaminkerShape}) is to write two $C^*$-algebras
of interest as inductive limits of semiprojective $C^*$-algebras,
then intertwine the semiprojective $C^*$-algebras to produce
a substitute for a homotopy equivalence when a true homotopy
equivalence is not possible.

It was later discoverer, by Dadarlat \cite{DadarlatShapeAndE}, that
shape theory is closely related to $E$-theory.  In Dadarlat's approach
to shape theory, semiprojectivity plays only a supporting role.

Neither shape theory nor $E$-theory have been worked out for real $C^*$-algebras, but questions of semiprojectivity and the lack-thereof in real $C^*$-algebras are now seen to arise in physics \cite{HastLorTheoryPractice}.
Without prejudice, we walk around those subjects and attack semiprojectivity for real $C^*$-algebras head-on.

Our task is to incorporate reflections in the subject of
semiprojectivity. Our goal is to prove that $C(X,\id)$ is semiprojective for any finite one dimensional CW complex. We will only achieve that goal once we reach the end of section \ref{FunGraphSec}. 

In this and the next two sections we will
borrow proof techniques from across the field of semiprojectivity
in $C^*$-algebras without further references.

\subsection{Definitions and the basics}

We give the obvious definitions of projectivity and semiprojectivity in the categories $\mathbf{C^{*,\tau}}$ and $\mathbf{C^{*,\tau}_1}$. 

\begin{Def}
Let $\mathbf{C}$ be one of the categories $\mathbf{C^{*,\tau}}$ or $\mathbf{C^{*,\tau}_1}$. An object $A$ in $\mathbf{C}$ is said to be {\em projective}, if whenever $J$ is an ideal in $B$, another object in $\mathbf{C}$, and we have a morphism $\phi \colon A \to B / J$ in $\mathbf{C}$, we can find a morphism $\psi \colon A \to B$ in $\mathbf{C}$ such that $\pi \circ \psi = \phi$, where $\pi$ it the quotient map from $B$ to $B / J$. 
\end{Def}

\begin{Def}
Let $\mathbf{C}$ be one of the categories $\mathbf{C^{*,\tau}}$ or $\mathbf{C^{*,\tau}_1}$. An object $A$ in $\mathbf{C}$ is said to be {\em semiprojective}, if whenever $J_1 \subseteq J_2 \subseteq \cdots$ is an increasing sequence of ideals in $B$, another object in $\mathbf{C}$, and we have a morphism $\phi \colon A \to B / J$, $J = \overline{\cup_n J_n}$, in $\mathbf{C}$, we can find an $m \in \N$ and morphism $\psi \colon A \to B / J_m$ in $\mathbf{C}$ such that $\pi_{m,\infty} \circ \psi = \phi$, where $\pi_{m,\infty}$ it the quotient map from $B / J_m$ to $B / J$. 
\end{Def}

\begin{notation}
Whenever we have a $C^{*,\tau}$-algebra $B$ containing an increasing sequence of $\tau$-invariant ideals $J_1 \subseteq J_2 \subseteq \cdots$ we denote the quotient maps as follows:
\begin{align*}
	& \pi_n \colon B \twoheadrightarrow B/J_n, \\
	& \pi_{n,m} \colon B/J_n \twoheadrightarrow B/J_m, \\
	& \pi_{m,\infty} \colon B/J_m \twoheadrightarrow B/J, \\
	& \pi_\infty \colon B \twoheadrightarrow B / J,
\end{align*}
where $n < m$ are natural numbers and $J = \overline{\cup_n J_n}$. 
\end{notation}

Pictorially, $(A,\tau)$ is semiprojective, if we can always solve the following lifting problem
\[
\xymatrix{
																				&(B,\tau) \ar@{->>}[d] \\
																				&(B/J_n,\tau) \ar@{->>}[d] \\
	(A,\tau) \ar[r]_{\phi} \ar@{-->}[ur]	& (B/J,\tau)
}
\]

Of course one could just as easily define semiprojective $R^*$-algebras. Studying how the functors $\Re$ and $\overline{\bigstar}$  behave with respect to ideals and lifting problems, the following two propositions can be proved. For reasons of brevity we have chosen not to include proofs of these propositions.

\begin{prop}
Let $A, B$ be $R^*$-algebras, let $J$ be an ideal in $B$, and let $\phi \colon A \to B/J$ be an $R^*$-homomorphism.
There exists an $R^*$-homomorphism $\psi \colon A \to B$ such that $\pi \circ \psi = \phi$ if and only if there exists a $C^{*,\tau}$-homomorphism $\chi \colon \overline{\bigstar}(A) \to \overline{\bigstar}(B)$ such that $\overline{\bigstar}(\pi) \circ \chi = \overline{\bigstar}(\phi)$.
\end{prop}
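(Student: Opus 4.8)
The plan is to exploit the fact, recorded just above the statement, that $\overline{\bigstar}$ and $\Re$ are mutually quasi-inverse equivalences of categories; in particular $\overline{\bigstar}$ is full and faithful. Once this is in hand, the proposition becomes the purely formal statement that a full and faithful functor both preserves and reflects the solvability of a factorization problem $\pi \circ \psi = \phi$. Note that no feature of the ideal $J$ is really used beyond the fact that $\pi$ and $\phi$ are morphisms in $\mathbf{R^*}$ and that $\overline{\bigstar}(\pi)$, $\overline{\bigstar}(\phi)$ are their images (here $\overline{\bigstar}(\pi) \colon \overline{\bigstar}(B) \to \overline{\bigstar}(B/J)$, so the two composites live in the same hom-set); the content is entirely categorical, with the existential $\chi$ in the statement read as a lift, i.e.\ $\overline{\bigstar}(\pi) \circ \chi = \overline{\bigstar}(\phi)$.

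For the forward implication I would argue directly by functoriality. Given an $R^*$-homomorphism $\psi \colon A \to B$ with $\pi \circ \psi = \phi$, set $\chi = \overline{\bigstar}(\psi)$. Then $\chi$ is a $*$-$\tau$-homomorphism $\overline{\bigstar}(A) \to \overline{\bigstar}(B)$, and since $\overline{\bigstar}$ is a functor
\[
  \overline{\bigstar}(\pi) \circ \chi = \overline{\bigstar}(\pi) \circ \overline{\bigstar}(\psi) = \overline{\bigstar}(\pi \circ \psi) = \overline{\bigstar}(\phi),
\]
so $\chi$ solves the complexified lifting problem.

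For the reverse implication I would run the same computation backwards, using fullness and faithfulness. Suppose $\chi \colon \overline{\bigstar}(A) \to \overline{\bigstar}(B)$ is a $*$-$\tau$-homomorphism with $\overline{\bigstar}(\pi) \circ \chi = \overline{\bigstar}(\phi)$. Since $\overline{\bigstar}$ is full, there is an $R^*$-homomorphism $\psi \colon A \to B$ with $\overline{\bigstar}(\psi) = \chi$. Functoriality then gives $\overline{\bigstar}(\pi \circ \psi) = \overline{\bigstar}(\pi) \circ \overline{\bigstar}(\psi) = \overline{\bigstar}(\pi) \circ \chi = \overline{\bigstar}(\phi)$. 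As both $\pi \circ \psi$ and $\phi$ are $R^*$-homomorphisms $A \to B/J$, faithfulness of $\overline{\bigstar}$ forces $\pi \circ \psi = \phi$, so $\psi$ is the desired lift.

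The only real work, and hence the step I expect to be the main obstacle, is justifying that $\overline{\bigstar}$ is full and faithful. Faithfulness is the easier half: if $\overline{\bigstar}(\psi_1) = \overline{\bigstar}(\psi_2)$, applying $\Re$ and using the natural isomorphism $\Re(\overline{\bigstar}(A)) \cong A$ recovers $\psi_1 = \psi_2$. Fullness is where care is needed, since one must produce a preimage $\psi$ of an arbitrary $\chi$; the clean route is to invoke the quasi-inverse structure directly (a functor admitting a quasi-inverse is automatically full and faithful), which the excerpt has already asserted for $\overline{\bigstar}$ and $\Re$. If one preferred a hands-on argument avoiding the abstract equivalence, the alternative is to apply $\Re$ to the hypothesis $\overline{\bigstar}(\pi) \circ \chi = \overline{\bigstar}(\phi)$ and transport along the natural isomorphisms $\Re \circ \overline{\bigstar} \cong \id_{\mathbf{R^*}}$; here one must check naturality of that isomorphism with respect to $\pi$ and $\phi$ so that the resulting square commutes and yields $\pi \circ \psi = \phi$ with $\psi$ the transported map $\Re(\chi)$. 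Either way, the crux is the good behaviour of $\overline{\bigstar}$ on morphisms rather than anything about lifting per se.
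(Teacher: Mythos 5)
Your proof is correct. The paper deliberately omits a proof of this proposition (``for reasons of brevity''), saying only that it follows from studying how $\Re$ and $\overline{\bigstar}$ behave with respect to ideals and lifting problems; your argument --- functoriality of $\overline{\bigstar}$ for the forward direction, fullness and faithfulness for the converse --- is exactly the argument being gestured at, and you are right to read the misprinted $\psi$ in the displayed equation as $\chi$. The one step you flag as the potential obstacle, fullness, does rest on the categorical equivalence that the paper asserts without proof, but it also admits the direct verification you sketch, and it is worth recording how short it is: a $*$-$\tau$-homomorphism $\chi \colon (A_\C,\bar{*}) \to (B_\C,\bar{*})$ commutes with $*$ and with $\tau$, hence maps the real part $\{x \mid x^* = x^{\tau}\} \cong A$ into the real part $\cong B$, giving $\psi = \Re(\chi)$; since $\chi$ is $\C$-linear and every element of $A_\C$ has the form $a_1 \dotplus i \cdot a_2$ with $a_1, a_2$ real, $\chi$ is determined by this restriction, so $\chi = \overline{\bigstar}(\psi)$. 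Faithfulness is even more immediate than your route through the natural isomorphism $\Re(\overline{\bigstar}(A)) \cong A$: simply evaluate $\overline{\bigstar}(\psi_1) = \overline{\bigstar}(\psi_2)$ on elements $a \dotplus i \cdot 0$. With those two facts in hand, your two implications are complete and no naturality check is actually needed.
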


\begin{prop}
If $A$ is an $R^*$-algebra then $A$ is (semi-) projective if and only if $\overline{\bigstar}(A)$ is. If $(B,\tau)$ is a $C^{*,\tau}$-algebra then $(B,\tau)$ is (semi-) projective if and only if $\Re(B,\tau)$ is. 
\end{prop}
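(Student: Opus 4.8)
The plan is to deduce everything from the categorical equivalence between $\mathbf{R^*}$ and $\mathbf{C^{*,\tau}}$ implemented by $\Re$ and $\overline{\bigstar}$, after checking that both functors interact well with ideals, quotients, and closed increasing unions of ideals. Since (semi)projectivity is plainly invariant under isomorphism, the second assertion is immediate from the first: $\Re(B,\tau)$ is an $R^*$-algebra, the first assertion gives that it is (semi)projective iff $\overline{\bigstar}(\Re(B,\tau))$ is, and $\overline{\bigstar}(\Re(B,\tau)) \cong (B,\tau)$. So I would concentrate on the first assertion.

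First I would record the ideal bookkeeping. If $J \triangleleft B$ is an ideal in an $R^*$-algebra, then $\overline{\bigstar}(J) = J_\C$ is a $\tau$-invariant ideal of $\overline{\bigstar}(B) = B_\C$, the natural map $B_\C/J_\C \to (B/J)_\C$ is a $*$-$\tau$-isomorphism, so $\overline{\bigstar}(B)/\overline{\bigstar}(J) \cong \overline{\bigstar}(B/J)$, and for an increasing sequence $J_1 \subseteq J_2 \subseteq \cdots$ with $J = \overline{\bigcup_n J_n}$ one has $\overline{\bigstar}(J) = \overline{\bigcup_n \overline{\bigstar}(J_n)}$ (the norm on $B_\C$ controls each component, so convergence of $a_1 \dotplus i a_2$ forces convergence of $a_1$ and $a_2$). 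Dually, the key point is that every $\tau$-invariant ideal $\tilde J$ of a complexification $B_\C$ is of the form $\overline{\bigstar}(I)$ for $I = \{ a \in B \mid a \dotplus i \cdot 0 \in \tilde J \}$: if $a_1 \dotplus i a_2 \in \tilde J$, then applying $*$ and $\bar{*}$ and taking sums and differences (and multiplying by the scalar $-i$) shows $a_1, a_2 \in I$. Transporting along the isomorphism $(B,\tau) \cong \overline{\bigstar}(\Re(B,\tau))$, this means that every $C^{*,\tau}$-lifting problem is, up to the natural isomorphisms, the image under $\overline{\bigstar}$ of an $R^*$-lifting problem.

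With this in hand the two implications are diagram chases. For the direction that $\overline{\bigstar}(A)$ (semi)projective implies $A$ (semi)projective, I would start from an $R^*$-lifting problem $\phi \colon A \to B/J$ (with increasing ideals in the semiprojective case), apply $\overline{\bigstar}$ and the compatibilities above to obtain a $C^{*,\tau}$-lifting problem for $\overline{\bigstar}(A)$, solve it by hypothesis to get $\chi \colon \overline{\bigstar}(A) \to \overline{\bigstar}(B)$ (resp.\ into $\overline{\bigstar}(B)/\overline{\bigstar}(J_m)$), and then push the solution back with $\Re$, using $\Re \circ \overline{\bigstar} \cong \id$, to recover the desired $R^*$-lift; functoriality guarantees it lifts $\phi$. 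In the plain projective case one may instead quote the previous proposition directly. For the converse, I would start from an arbitrary $C^{*,\tau}$-lifting problem for $\overline{\bigstar}(A)$, use the equivalence together with the ideal correspondence of the previous paragraph to rewrite it as $\overline{\bigstar}$ of an $R^*$-lifting problem $\phi_0 \colon A \to B_0/J_0$, solve that by (semi)projectivity of $A$, and apply $\overline{\bigstar}$ to the solution.

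I expect the genuine work to be the ideal bookkeeping of the second paragraph --- in particular verifying that every $\tau$-invariant ideal of a complexification is itself a complexification, and that $\overline{\bigstar}$ commutes with the closed increasing union $\overline{\bigcup_n J_n}$. Once these structural facts are in place the lifting arguments are purely formal consequences of the equivalence and functoriality, the only care needed being to track the natural isomorphisms $\Re \overline{\bigstar} \cong \id$ and $\overline{\bigstar} \Re \cong \id$ through the diagrams so that the constructed maps genuinely solve the original problems.
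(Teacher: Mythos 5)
Your proposal is correct, and it follows exactly the route the paper indicates: the authors omit the proof, saying only that it follows by ``studying how the functors $\Re$ and $\overline{\bigstar}$ behave with respect to ideals and lifting problems,'' and your argument supplies precisely that --- the ideal correspondence (every $\tau$-invariant ideal of a complexification is the complexification of an $R^*$-ideal, compatibly with quotients and closed increasing unions) combined with the paper's preceding proposition on lifting problems and the natural isomorphisms $\Re\,\overline{\bigstar} \cong \id$ and $\overline{\bigstar}\,\Re \cong \id$. In particular, your reduction of the second assertion to the first via $\overline{\bigstar}(\Re(B,\tau)) \cong (B,\tau)$ is exactly the intended use of the equivalence.
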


Just as in the $C^*$-case we can somewhat simplify the task of proving semiprojectivity. 

\begin{prop} \label{easierSP}
To show that a $C^{*,\tau}$-algebra $(A,\tau)$ is semiprojective it
suffices to solve lifting problems where $\phi \colon (A,\tau) \to (B/J,\tau)$ is either injective, surjective or both.
\end{prop}
\begin{proof}
This is well know in the $C^*$-case \cite{loringBook}, and
is no harder in the $C^{*,\tau}$-case. To get injectivity
we replace $\phi$ with
$\phi \oplus \id \colon (A,\tau) \to (B/J \oplus A, \tau \oplus \tau)$.
To get surjectivity we focus on the image of $\phi.$
\end{proof}

Functional calculus is indispensable when working with lifting problems. The following lemma tells some of the story about $C^{*,\tau}$-algebras and functional calculus. 

\begin{lemma} \label{tauAndFunctions}
Suppose $a$ is a normal element in a $C^{*,\tau}$-algebra $(A,\tau)$. If $f$ is a continuous function from $\sigma(a)$ to $\C$ then $f(a)^\tau = f(a^\tau)$.

If $b \in (A,\tau)$ is a normal and self-$\tau$ element and $\sigma(b) \subseteq X \subseteq \C$ then the $C^*$-homomorphism $\phi \colon C_0(X) \to A$ given by $f \mapsto f(b)$ is a $C^{*,\tau}$-homomorphism from $C(X,\id)$ to $(A,\tau)$. 
\end{lemma}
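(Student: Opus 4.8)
The plan is to reduce the second statement to the first, and to prove the first by combining the three structural properties of a reflection (linearity, $*$-preservation, and anti-multiplicativity together with $\tau^2 = \id$) with continuous functional calculus and a density argument. First I would record that $a^\tau$ is again normal and that $\sigma(a^\tau) = \sigma(a)$, so that $f(a^\tau)$ makes sense for every continuous $f$ on $\sigma(a)$. Normality of $a^\tau$ follows by computing $a^\tau (a^\tau)^* = (a^*a)^\tau$ and $(a^\tau)^* a^\tau = (aa^*)^\tau$, using that $\tau$ is $*$-preserving and anti-multiplicative; the spectral claim is the remark following the definition of a reflection. I would also pass to the unitization, extending $\tau$ via Lemma~\ref{onlyOneWayToUnitize}, so that constants are available and the functional calculus is the ordinary unital one.

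Next, consider the map $\Psi \colon C(\sigma(a)) \to A$ given by $\Psi(f) = f(a)^\tau$. It is $\R$-linear, and since $\tau$ and functional calculus are both $*$-preserving it satisfies $\Psi(\bar f) = \Psi(f)^*$; since $\tau$ is anti-multiplicative it satisfies $\Psi(fg) = \Psi(g)\Psi(f)$. The point I would stress is that because $C(\sigma(a))$ is commutative we also have $\Psi(fg) = \Psi(gf) = \Psi(f)\Psi(g)$, so the images $\Psi(f)$ and $\Psi(g)$ commute and $\Psi$ is in fact multiplicative, hence a genuine $*$-homomorphism. The map $f \mapsto f(a^\tau)$ is likewise a $*$-homomorphism $C(\sigma(a)) \to A$. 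Both send the constant $1$ to the unit and the identity function $z \mapsto z$ to $a^\tau$, and these generate $C(\sigma(a))$ as a $C^*$-algebra, so by uniqueness of the continuous functional calculus the two homomorphisms coincide; this is exactly $f(a)^\tau = f(a^\tau)$.

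For the second statement, note first that the identity map makes $(C_0(X),\id)$ a $C^{*,\tau}$-algebra precisely because commutativity turns anti-multiplicativity into multiplicativity. Since $b$ is self-$\tau$, i.e.\ $b^\tau = b$, the first part (applied to $f$ restricted to $\sigma(b) \subseteq X$) yields $f(b)^\tau = f(b^\tau) = f(b)$ for every $f \in C_0(X)$. As $f^{\id} = f$, this reads $\phi(f^{\id}) = \phi(f) = f(b) = f(b)^\tau = \phi(f)^\tau$, so the standard functional-calculus $*$-homomorphism $\phi$ intertwines the two reflections and is therefore a $C^{*,\tau}$-homomorphism.

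The only genuinely delicate point is the passage from anti-multiplicativity to multiplicativity: a priori $f \mapsto f(a)^\tau$ reverses products, and it is exactly the normality of $a$ — which forces $C^*(a)$, and correspondingly $C(\sigma(a))$, to be commutative — that repairs this and lets the uniqueness of continuous functional calculus close the argument. Equivalently, one can verify $f(a)^\tau = f(a^\tau)$ first on monomials $z^m \bar z^n$, where the order reversal introduced by $\tau$ is harmless because the resulting factors are powers of the single normal element $a^\tau$ and hence commute, and then extend to all continuous $f$ by Stone--Weierstrass together with the continuity of $\tau$ and of the functional calculus. Everything else is routine.
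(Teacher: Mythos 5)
Your proposal is correct, and its main route is a mild but genuine repackaging of the paper's argument, which is terser: the paper proves the first claim by noting $\sigma(a)=\sigma(a^\tau)$, asserting $p(a)^\tau = p(a^\tau)$ for polynomials ``since $\tau$ is linear,'' and extending to all $f \in C(\sigma(a))$ by continuity of $\tau$; the second claim is then exactly your reduction, $f(b)^\tau = f(b^\tau) = f(b) = (f\circ\id)(b)$. Your primary formulation --- that $\Psi(f)=f(a)^\tau$ is an honest $*$-homomorphism because anti-multiplicativity becomes multiplicativity on the commutative algebra $C(\sigma(a))$, and that it must agree with $f \mapsto f(a^\tau)$ by uniqueness of the continuous functional calculus on the generators $1$ and $z$ --- buys something real: it makes explicit the step the paper elides. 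Since $\tau$ reverses products, $p(a)^\tau = p(a^\tau)$ for $*$-polynomials (which is what Stone--Weierstrass actually requires, as $\sigma(a)\subseteq\C$) is \emph{not} a consequence of linearity alone; for mixed monomials $a^m(a^*)^n$ the order reversal must be absorbed using normality of $a^\tau$, precisely the point you isolate in your final paragraph, and your alternative monomial-plus-density route is the paper's proof with this gap filled. You also correctly supply details the paper omits (normality of $a^\tau$, and unitizing via Lemma~\ref{onlyOneWayToUnitize} so that constants are available). One small correction: for the uniqueness argument $\Psi$ must be $\C$-linear, not merely $\R$-linear as you wrote --- an $\R$-linear, multiplicative, $*$-preserving map sending $1\mapsto 1$ and $z \mapsto a^\tau$ need not coincide with the functional calculus; fortunately a reflection is by definition $\C$-linear (as the paper verifies explicitly in the multiplier-algebra theorem), so $\Psi$ is too and your argument stands as written once you upgrade that adjective.
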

\begin{proof}
We remind the reader that $\sigma(a) = \sigma(a^\tau)$. Since $\tau$ is linear and $a$ is normal we have $p(a)^\tau = p(a^\tau)$, for any polynomial $p$ in $a$ and $a^*$. By continuity of $\tau$ we now get $f(a)^\tau = f(a^\tau)$ for any function $f \in C(\sigma(a))$.

For any function $f \in C_0(X)$ we have 
\[
	f(b)^\tau = f(b^\tau) = f(b) = (f \circ \id)(b). \qedhere
\]
\end{proof}

With that lemma at our disposal, we can give some basic examples of (semi-) projective $C^{*,\tau}$-algebras. 

\begin{eks} \label{projectiveEx}
The $C^{*,\tau}$-algebra $C_0((0,1], \id)$ is projective. To see this, suppose we are given the following lifting problem:
\[
\xymatrix{
																& (B,\tau) \ar@{->>}[d]^{\pi} \\
	C_0((0,1], \id) \ar[r]_{\phi}	& (B/J,\tau)
}
\]
Let $h = \phi(t \mapsto t)$. Then $h$ is a self-$\tau$ positive contraction. Let $x$ be a positive contractive lift of $h$, and let $k = (x + x^\tau)/2$. Then $k$ is a self-$\tau$, positive contraction, and $\pi(x) = h$. By Lemma~\ref{tauAndFunctions} the map $f \mapsto f(k)$ is a $C^{*,\tau}$-homomorphism. It is a lift of $\phi$ by standard $C^*$-theory.
\end{eks}

\begin{eks}
The $C^{*,\tau}$-algebra $(\C, \id)$ is semiprojective. To prove this, suppose we are given the following lifting problem:
\[
\xymatrix{
													&(B,\tau) \ar@{->>}[d]^{\pi_{n}} \\
													&(B/J_n,\tau) \ar@{->>}[d]^{\pi_{n,\infty}} \\
	( \C,\id) \ar[r]_{\phi}	& (B/J,\tau)
}
\] 
Let $p = \phi(1)$. Then $p$ is a self-$\tau$ projection. Let $y \in (B,\tau)$ be any self-adjoint lift of $p$. If we let $x = (y + y^\tau)/2$ then $x$ is a self-$\tau$ and self-adjoint lift of $p$. Since $\pi_{n}(x^2 - x) \to 0$ as $n \to \infty$ we can find some $m \in \N$ such that $1/2 \notin \sigma(\pi_{m}(x))$. Now let $f$ be the function that is $0$ on $(-\infty;1/2)$ and $1$ on $(1/2;\infty)$. Then $q = f(\pi_m(x))$ is a projection and a lift of $p$. Since $x$ is self-$\tau$, $q$ will be self-$\tau$. We can now define a $C^{*,\tau}$-homomorphism from $\C$ to $B/J_m$ by $\lambda \mapsto \lambda q$ (Lemma~\ref{tauAndFunctions}). It is a lift of $\phi$.   
\end{eks}

\subsection{Closure results}

\subsubsection{Unitizing}

We aim to get the $C^{*,\tau}$ equivalent of $C^*$ result that $A$ is semiprojective if and only if $\tilde{A}$ is. First we show that if $A$ is unital it suffices to solve unital lifting problems. 

\begin{lemma} \label{spIffUnitalSp}
A unital $C^{*,\tau}$-algebra is semiprojective in $\mathbf{C^{*,\tau}}$ if and only if it is semiprojective in $\mathbf{C^{*,\tau}_1}$. 
\end{lemma}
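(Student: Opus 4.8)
The plan is to prove the two implications separately, with essentially all of the work living in the direction that unital semiprojectivity implies semiprojectivity in $\mathbf{C^{*,\tau}}$.

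For the easy direction, that semiprojectivity in $\mathbf{C^{*,\tau}}$ implies semiprojectivity in $\mathbf{C^{*,\tau}_1}$, suppose $A$ is semiprojective in $\mathbf{C^{*,\tau}}$ and take a unital lifting problem: $B$ unital, an increasing sequence of $\tau$-ideals $J_1 \subseteq J_2 \subseteq \cdots$, and a unital $*$-$\tau$-homomorphism $\phi \colon A \to B/J$. Applying semiprojectivity in the larger category yields a lift $\psi \colon A \to B/J_m$ that need not be unital. I would set $e = \psi(\mathbb{1}_A)$, a self-$\tau$ projection with $\pi_{m,\infty}(e) = \phi(\mathbb{1}_A) = \mathbb{1}_{B/J}$, so that $\mathbb{1} - e$ is a self-$\tau$ projection lying in $\ker \pi_{m,\infty} = J/J_m = \overline{\bigcup_{k \ge m} J_k/J_m}$. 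Since $\|\pi_{m,k}(x)\| = \mathrm{dist}(x, J_k/J_m) \to \mathrm{dist}(x, J/J_m)$ as $k \to \infty$, we get $\|\pi_{m,k}(\mathbb{1}-e)\| \to 0$; but $\pi_{m,k}(\mathbb{1}-e)$ is a projection at every stage, so it vanishes for some large $k$, i.e.\ $\pi_{m,k}(e) = \mathbb{1}_{B/J_k}$. Then $\pi_{m,k} \circ \psi$ is a \emph{unital} $*$-$\tau$-homomorphism into $B/J_k$ lifting $\phi$, as required.

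For the converse, suppose $A$ (unital) is semiprojective in $\mathbf{C^{*,\tau}_1}$ and let $B$, the $J_n$, and $\phi \colon A \to B/J$ be an arbitrary lifting problem in $\mathbf{C^{*,\tau}}$, with $B$ and $\phi$ possibly non-unital. The idea is to cut everything down by a projection so as to manufacture a unital lifting problem. First I would set $p = \phi(\mathbb{1}_A)$, a self-$\tau$ projection in $B/J$, and note that $\phi(a) = p\,\phi(a)\,p$, so $\phi$ factors as a \emph{unital} map into the corner $p(B/J)p$. Using that $(\C,\id)$ is semiprojective, the map $\lambda \mapsto \lambda p$ lifts through the tower $B/J_n$, producing a self-$\tau$ projection $q \in B/J_m$ with $\pi_{m,\infty}(q) = p$. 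Because $q$ is self-$\tau$, the corner $C := q(B/J_m)q$ is a unital $C^{*,\tau}$-algebra, the sets $q(J_k/J_m)q$ (for $k \ge m$) form an increasing sequence of $\tau$-ideals in it, and one checks $\overline{\bigcup_k q(J_k/J_m)q} = q(J/J_m)q$, together with the identifications $C/q(J_k/J_m)q \cong \pi_{m,k}(q)(B/J_k)\pi_{m,k}(q)$ and $C/q(J/J_m)q \cong p(B/J)p$. Thus $\phi$, viewed as a unital map into $p(B/J)p$, is a unital lifting problem for $A$ over the tower $C/q(J_k/J_m)q$. Unital semiprojectivity then provides a unital lift $\psi' \colon A \to \pi_{m,k}(q)(B/J_k)\pi_{m,k}(q)$ for some $k$; composing with the inclusion into $B/J_k$ gives a $*$-$\tau$-homomorphism $\psi \colon A \to B/J_k$ with $\pi_{k,\infty} \circ \psi = \phi$.

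The routine verifications --- that $\tau$ restricts to a reflection on the corners (which needs exactly that $p$ and $q$ are self-$\tau$) and that ideals and quotients of corners behave as claimed --- are standard manipulations. The main obstacle is the bookkeeping in the converse: correctly identifying the quotient of $q(B/J_m)q$ by $q(J_k/J_m)q$ with the corner $\pi_{m,k}(q)(B/J_k)\pi_{m,k}(q)$, compatibly across all stages, so that a lift of the cut-down problem reassembles into a lift of the original and so that the final diagram commutes over $B/J$. Once these identifications are in place, the reduction to the unital hypothesis is immediate.
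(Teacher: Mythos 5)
Your proposal is correct and follows essentially the same route as the paper: for the nontrivial direction you lift $p=\phi(\mathbb{1}_A)$ to a self-$\tau$ projection $q$ using semiprojectivity of $(\C,\id)$, cut the tower down to the self-$\tau$ corners $q_k(B/J_k)q_k$, solve the resulting unital lifting problem, and compose with the corner inclusion, exactly as in the paper's proof. The only difference is cosmetic: you spell out the easy direction (killing the projection $\mathbb{1}-e$ in $J/J_m$ at a finite stage), which the paper dismisses as identical to the $C^*$-case.
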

\begin{proof}
Let $(A,\tau)$ be a unital $C^{*,\tau}$-algebra. 

The proof that $(A,\tau)$ semiprojective in $\mathbf{C^{*,\tau}}$ implies that it is semiprojective in $\mathbf{C^{*,\tau}_1}$ is precisely the same as in the $C^*$-case.

Suppose that $(A,\tau)$ is semiprojective in $\mathbf{C^{*,\tau}_1}$. Let $(B,\tau)$ be a $C^{*,\tau}$-algebra containing an increasing sequence of $C^{*,\tau}$ ideals $J_1 \subseteq J_2 \subseteq \cdots$, let $J = \overline{\cup_n J_n}$, and let $\phi \colon (A,\tau) \to (B/J, \tau)$ be a $C^{*,\tau}$-homomorphism. Put $p = \phi(1_A)$. Then $p$ is a self-$\tau$ projection in $(B/J,\tau)$. Since $(\C, \id)$ is semiprojective, we can find some $n_0 \in \N$ and self-$\tau$ projection $q \in B/J_{n_0}$ such that $\pi_{n_0,\infty}(q) = p$. For each $n \geq n_0$ define $q_n = \pi_{n_0,n}(q)$. Since all the $q_n$ are self-$\tau$ all the corners $q_n(B/J_n)q_n$ are self-$\tau$. Hence for all $n \geq n_0$ we have that $q_n(B / J_n)q_n \cong (qBq)/(q J_n q)$ and that by restricting the $\tau$'s we get the following commutative diagram of $C^{*,\tau}$-algebras:
\[
\xymatrix{
	& (q_{n_0} (B / J_{n_0}) q_{n_0},\tau) \ar@{->>}[d] \lhook\mkern-7mu \ar[r]	& (B/J_{n_0}, \tau) \ar@{->>}[d] \\
	& (q_{n} (B / J_{n}) q_{n},\tau) \ar@{->>}[d] \lhook\mkern-7mu \ar[r]	& (B/J_{n}, \tau) \ar@{->>}[d] \\
	(A,\tau) \ar[r]_-{\phi}	& (p (B / J) p,\tau) \lhook\mkern-7mu \ar[r]	& (B/J, \tau) 
}
\] 
In the two left most columns there are only unital maps and algebras, so since $(A,\tau)$ is semiprojective in the unital category, we can find a lift for some $n \geq n_0$. This lifting combines with the inclusion $q_{n} (B / J_{n}) q_{n} \hookrightarrow B /J_n$ to show that $(A,\tau)$ is semiprojective. 
\end{proof}

The lemma is a stepping stone towards a goal, but it also has its own applications. 

\begin{eks}
The $C^{*,\tau}$-algebras $C(S^1, \id), C(S^1, z \mapsto \overline{z})$, and $C(S^1,$ $z \mapsto -z)$ 
are all semiprojective. We will only show the first one, but the remaining proofs are similar. By Lemma \ref{spIffUnitalSp} it suffices to solve lifting problems of the form:
\[
\xymatrix{
															&(B,\tau) \ar@{->>}[d]^{\pi_{n}} \\
															&(B/J_n,\tau) \ar@{->>}[d]^{\pi_{n,\infty}} \\
	(C(S^1),\id) \ar[r]_{\phi}	& (B/J,\tau)
}
\] 
where everything is unital. Let $u = \phi(z \mapsto z)$. Then $u$ is a self-$\tau$ unitary. Let $y$ be any self-$\tau$ lift of $u$. We can find an $m$ such that $x = \pi_m(x)$ satisfies that $xx^*$ and $x^*x$ are invertible. Now define $v = x(x^*x)^{-1/2}$. The $v$ is a unitary lift of $u$ and, by Lemma~\ref{tauAndFunctions} and a standard functional calculus trick, 
\[
	v^\tau = ((x^*x)^{-1/2})^\tau x^\tau = ((x^*x)^\tau)^{-1/2} x = (xx^*)^{-1/2} x = x (x^*x)^{-1/2} = v. 
\] 
There is $C^*$-homomorphism from $C(S^1)$ to $B/J_m$ given by $\psi(f) = f(v)$. Since $v$ is self-$\tau$ and every element in $C(S^1, \id)$ is self-$\tau$, this is actually a $C^{*,\tau}$-homomorphism from $C(S^1, \id)$ to $(B/J_m, \tau)$. Because $v$ is a lift of $u$, $\psi$ is a lift of $\phi$.
\end{eks}

We omit the proof of the next lemma, as the standard $C^*$ proof carries over easily. 

\begin{lemma} \label{spIffUnitizationSp}
A $C^{*,\tau}$-algebra $(A,\tau)$ is semiprojective if and only if $\widetilde{(A,\tau)}$ is semiprojective in the unital $C^{*,\tau}$ category. 
\end{lemma}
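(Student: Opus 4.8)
The plan is to prove the two implications separately, in both cases exploiting the split short exact sequence $0 \to (A,\tau) \to \widetilde{(A,\tau)} \to (\C,\id) \to 0$ together with the fact that adjoining a unit commutes with passing to quotients by ideals contained in the algebra. Throughout I write $\tilde{A}$ for $\widetilde{(A,\tau)}$ and freely invoke Lemma~\ref{spIffUnitalSp} to move between semiprojectivity of the unital algebra $\tilde{A}$ in $\mathbf{C^{*,\tau}}$ and in $\mathbf{C^{*,\tau}_1}$, so that it suffices to compare semiprojectivity of $(A,\tau)$ in $\mathbf{C^{*,\tau}}$ with that of $\tilde{A}$.

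First I would handle the direction where $(A,\tau)$ is semiprojective. Given a unital lifting problem for $\tilde{A}$ --- a unital $C^{*,\tau}$-algebra $B$, increasing $\tau$-invariant ideals $J_n$ with $J = \overline{\cup_n J_n}$, and a unital $*$-$\tau$-homomorphism $\phi \colon \tilde{A} \to B/J$ --- I restrict to $A$ to obtain $\phi|_A \colon A \to B/J$, a lifting problem in $\mathbf{C^{*,\tau}}$. By hypothesis this admits a lift $\psi \colon A \to B/J_m$ for some $m$ with $\pi_{m,\infty}\circ\psi = \phi|_A$. Since $B/J_m$ is unital and $\phi$ is unital, I extend $\psi$ to $\tilde{\psi}(a + \lambda\mathbb{1}) = \psi(a) + \lambda\mathbb{1}$. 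That this is a unital $*$-$\tau$-homomorphism lifting $\phi$ is immediate from the formula for the reflection on $\tilde{A}$ in Lemma~\ref{onlyOneWayToUnitize} and the fact that the adjoined unit is self-$\tau$.

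For the converse, assume $\tilde{A}$ is semiprojective and take a lifting problem for $A$, namely a $C^{*,\tau}$-algebra $B$, ideals $J_n$ with $J = \overline{\cup_n J_n}$, and $\phi \colon A \to B/J$. The idea is to unitize the entire problem. The ideals $J_n$ and $J$ are ideals of $\tilde{B}$ as well, and since $\tilde{B} = B \dotplus \C\mathbb{1}$ with $J_n, J \subseteq B$, one gets canonical $C^{*,\tau}$-isomorphisms $\tilde{B}/J_n \cong \widetilde{B/J_n}$ and $\tilde{B}/J \cong \widetilde{B/J}$, under which the quotient maps of $\tilde{B}$ are identified with the unitizations of the maps $\pi_{m,\infty}$. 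Unitizing $\phi$ yields a unital $*$-$\tau$-homomorphism $\tilde{\phi} \colon \tilde{A} \to \widetilde{B/J} \cong \tilde{B}/J$, and this, together with the ideals $J_n \triangleleft \tilde{B}$, is a unital lifting problem for $\tilde{A}$. Semiprojectivity of $\tilde{A}$ then produces an $m$ and a unital $*$-$\tau$-homomorphism $\tilde{\psi} \colon \tilde{A} \to \tilde{B}/J_m \cong \widetilde{B/J_m}$ with $\pi_{m,\infty}\circ\tilde{\psi} = \tilde{\phi}$.

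It remains to see that $\tilde{\psi}$ restricts to a map $A \to B/J_m$ lifting $\phi$, and this is the only genuinely delicate point. Let $c \colon \tilde{A} \to (\C,\id)$ be the canonical character with kernel $A$, and let $\rho \colon \widetilde{B/J_m} \to \C$ and $\bar{\rho} \colon \widetilde{B/J} \to \C$ be the characters vanishing on $B/J_m$ and $B/J$ respectively, so that $\bar{\rho}\circ\pi_{m,\infty} = \rho$ by uniqueness of the unital character killing $B/J_m$. Because $\tilde{\phi}$ is the unitization of $\phi$, one has $\bar{\rho}\circ\tilde{\phi} = c$, whence $\rho\circ\tilde{\psi} = \bar{\rho}\circ\pi_{m,\infty}\circ\tilde{\psi} = \bar{\rho}\circ\tilde{\phi} = c$. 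Therefore $\tilde{\psi}$ carries $A = \ker c$ into $\ker\rho = B/J_m$, and since $A$ is $\tau$-invariant in $\tilde{A}$ the restriction $\psi := \tilde{\psi}|_A \colon A \to B/J_m$ is a $*$-$\tau$-homomorphism with $\pi_{m,\infty}\circ\psi = \phi$. I expect the main obstacle to be not the lifting step itself but the bookkeeping that makes the unitization construction behave functorially in the $C^{*,\tau}$ setting: one must verify that adjoining a unit commutes with quotients by $\tau$-invariant ideals and that the relevant characters are $*$-$\tau$-homomorphisms into $(\C,\id)$, so that the character argument forcing $\tilde{\psi}(A) \subseteq B/J_m$ remains inside the category.
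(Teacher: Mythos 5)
Your proof is correct. The paper actually states Lemma~\ref{spIffUnitizationSp} without any proof, so there is no argument of the authors' to compare against; what you give is the standard argument from the complex $C^*$-case (cf.\ \cite{loringBook}) carried over to $\mathbf{C^{*,\tau}}$: the forward direction by extending a partial lift of $\phi\vert_A$ over the adjoined (necessarily self-$\tau$) unit, and the converse by unitizing the entire lifting problem and using the canonical characters to force $\tilde{\psi}(A) \subseteq B/J_m$. You also correctly isolate the one genuinely delicate point, namely that with the forced unitization $\tilde{B} = B \dotplus \C\mathbb{1}$ one has $\tilde{B}/J_n \cong \widetilde{B/J_n}$ compatibly with the quotient maps and with the extended reflections (unique by Lemma~\ref{onlyOneWayToUnitize}), which is exactly what keeps the character argument inside the category.
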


\begin{kor}
A $C^{*,\tau}$-algebra is semiprojective if and only if its unitization is. 
\end{kor}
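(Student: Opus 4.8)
The plan is to read the corollary off as a direct concatenation of the two preceding lemmas, with essentially no new work required. Throughout, I take the unqualified word ``semiprojective'' to mean semiprojective in the full category $\mathbf{C^{*,\tau}}$, in accordance with the definitions of Section~3.1; so the statement to be proved is that $(A,\tau)$ is semiprojective in $\mathbf{C^{*,\tau}}$ if and only if $\widetilde{(A,\tau)}$ is semiprojective in $\mathbf{C^{*,\tau}}$.

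First I would invoke Lemma~\ref{spIffUnitizationSp}, which already supplies half of the bridge: it says that $(A,\tau)$ is semiprojective in $\mathbf{C^{*,\tau}}$ precisely when $\widetilde{(A,\tau)}$ is semiprojective in the \emph{unital} category $\mathbf{C^{*,\tau}_1}$. Then, since $\widetilde{(A,\tau)}$ is by construction a unital $C^{*,\tau}$-algebra, Lemma~\ref{spIffUnitalSp} applies to it and identifies its semiprojectivity in $\mathbf{C^{*,\tau}_1}$ with its semiprojectivity in $\mathbf{C^{*,\tau}}$. Chaining the two equivalences yields that $(A,\tau)$ is semiprojective in $\mathbf{C^{*,\tau}}$ if and only if $\widetilde{(A,\tau)}$ is semiprojective in $\mathbf{C^{*,\tau}}$, which is exactly the corollary.

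There is no genuine obstacle here, as all the substance lives in the two lemmas. The only thing that needs care is the bookkeeping of which ambient category each occurrence of ``semiprojective'' refers to: Lemma~\ref{spIffUnitizationSp} delivers a conclusion phrased in the unital category, whereas the corollary is stated with the default non-unital reading on both sides, and it is precisely Lemma~\ref{spIffUnitalSp} that reconciles these. Once this is spelled out, the proof is a one-line composition of equivalences.
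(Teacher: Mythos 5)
Your proof is correct and is exactly the argument the paper intends (the corollary is stated without proof as an immediate consequence): combine Lemma~\ref{spIffUnitizationSp} with Lemma~\ref{spIffUnitalSp} applied to the unital algebra $\widetilde{(A,\tau)}$ to pass between the unital and non-unital readings of ``semiprojective.'' Your explicit bookkeeping of which category each occurrence of semiprojectivity lives in is precisely the one subtlety, and you have handled it correctly.
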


\begin{eks}
Since $C((0,1), \id)^\sim \cong C(S^1, \id)$ and the latter is semiprojective, $C((0,1),\id)$ is semiprojective. 
\end{eks}

\subsubsection{Direct sums}

In this section we aim to show the following.

\begin{prop} \label{directSumsp}
If $(A,\tau),(B,\tau)$ are separable and semiprojective $C^{*,\tau}$-algebras, then $(A \oplus B,\tau \oplus \tau)$ is a semiprojective $C^{*,\tau}$-algebra. 
\end{prop}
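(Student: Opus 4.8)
The plan is to mirror the corresponding plain $C^*$-algebraic fact---that a finite direct sum of separable semiprojective $C^*$-algebras is semiprojective---inserting enough $\tau$-bookkeeping to stay inside $\mathbf{C^{*,\tau}}$. So let $(C,\nu)$ be a $C^{*,\tau}$-algebra carrying an increasing sequence of ideals $J_1 \subseteq J_2 \subseteq \cdots$ with $J = \overline{\cup_n J_n}$, and let $\phi \colon (A \oplus B, \tau \oplus \sigma) \to (C/J, \nu)$ be a $*$-$\tau$-homomorphism. Writing $\iota_A \colon (A,\tau) \to (A\oplus B, \tau\oplus\sigma)$ and $\iota_B \colon (B,\sigma)\to(A\oplus B,\tau\oplus\sigma)$ for the canonical inclusions, set $\phi_A = \phi\circ\iota_A$ and $\phi_B = \phi\circ\iota_B$. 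By Proposition~\ref{easierSP} we may also assume $\phi$ is injective or surjective should that be convenient, though the argument below does not need it.

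First I would lift the two summands separately. Since $(A,\tau)$ and $(B,\sigma)$ are semiprojective, there is a single $m$ and $*$-$\tau$-homomorphisms $\psi_A\colon (A,\tau)\to(C/J_m,\nu)$ and $\psi_B\colon(B,\sigma)\to(C/J_m,\nu)$ with $\pi_{m,\infty}\circ\psi_A = \phi_A$ and $\pi_{m,\infty}\circ\psi_B = \phi_B$ (lift each at its own stage and push both up to the larger one via the maps $\pi_{\cdot,\cdot}$). The obstruction to assembling these into a map on $A\oplus B$ is that their ranges need not be orthogonal: because $\iota_A(A)\iota_B(B) = 0$ we only know $\pi_{m,\infty}\big(\psi_A(a)\psi_B(b)\big) = \phi_A(a)\phi_B(b) = 0$, i.e. $\psi_A(a)\psi_B(b) \in \ker\pi_{m,\infty}$.

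The crux is to promote this ``orthogonality in the limit'' to genuine orthogonality at a finite stage, equivariantly. Using separability, I would fix a sequential approximate unit $(e_k)$ of $A$ consisting of self-$\tau$ positive contractions; such a unit exists because the real part $\Re(A,\tau)$ is a separable $R^*$-algebra (Proposition~\ref{realPartIsRStar}) with its own positive sequential approximate unit, and positive elements of the real part are automatically self-$\tau$. For fixed $a,b$ the norms $\|\pi_{m,n}(\psi_A(a)\psi_B(b))\|$ decrease to $\|\pi_{m,\infty}(\psi_A(a)\psi_B(b))\| = 0$ as $n\to\infty$, so at a high enough stage $n$ the contraction $c = \pi_{m,n}(\psi_A(e_k))$ (self-$\nu$, since $e_k$ is self-$\tau$ and $\psi_A$, $\pi_{m,n}$ are $*$-$\tau$-homomorphisms) acts as an approximate unit on $\pi_{m,n}(\psi_A(A))$ while being almost orthogonal to $\pi_{m,n}(\psi_B(B))$. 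Applying a cut-off function $f$ (with $f \equiv 0$ near $0$ and $f\equiv 1$ near $1$) through functional calculus, and invoking Lemma~\ref{tauAndFunctions} to guarantee $f(c)^\nu = f(c)$, I would separate the two ranges and obtain, at some stage, genuinely orthogonal $*$-$\tau$-lifts $\psi_A',\psi_B'$ of $\phi_A,\phi_B$; these assemble to $\psi = \psi_A'\oplus\psi_B'\colon (A\oplus B,\tau\oplus\sigma)\to(C/J_n,\nu)$ lifting $\phi$, which is what semiprojectivity of the direct sum demands.

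I expect the orthogonalization step to be the main obstacle. Already in the plain $C^*$-setting, turning two almost-orthogonal lifts into exactly orthogonal homomorphisms at a finite stage is the technical heart of the direct-sum theorem, and here it must be carried out without leaving $\mathbf{C^{*,\tau}}$. The two ingredients that make the equivariant version go through are exactly the two highlighted above: every auxiliary positive element (the approximate unit, and hence every cut-off of it) can be taken self-$\tau$ by working in the real part, and Lemma~\ref{tauAndFunctions} ensures that all of the functional calculus used in the separation respects the reflection, so that $\psi_A'$, $\psi_B'$, and $\psi$ are $*$-$\tau$-homomorphisms rather than merely $*$-homomorphisms. Separability is used in an essential way, both to match the sequential system of ideals and to supply the sequential approximate unit driving the orthogonalization.
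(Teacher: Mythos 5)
There is a genuine gap, and it sits exactly where you predicted: the orthogonalization step. Your plan lifts $\phi_A$ and $\phi_B$ first and then tries to force orthogonality at a finite stage, but this fails for two reasons. First, the smallness $\|\pi_{m,n}(\psi_A(a)\psi_B(b))\| \to 0$ holds for each \emph{fixed} pair $(a,b)$, not uniformly over the unit balls, and no finite set of test elements controls a general homomorphism (you would first need finite generation of $A$ and $B$, which you neither have nor prove). More fatally, any compression of $\psi_B$ by a cut-off, say $b \mapsto (1-f(c))^{1/2}\,\pi_{m,n}(\psi_B(b))\,(1-f(c))^{1/2}$, is only \emph{approximately} multiplicative, and semiprojectivity provides no mechanism for repairing an approximately multiplicative map into a genuine $*$-$\tau$-homomorphism; that would be a stability (weak semiprojectivity) property not established here. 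Even granting such a repair, the resulting maps would no longer be \emph{exact} lifts: modulo $J$ the cut-off becomes $f(\phi(e_k,0))$, which acts as the identity on $\phi_A(A)$ only approximately, so $\pi_{n,\infty}\circ\psi_A'$ would agree with $\phi_A$ only up to a small error, whereas the definition of semiprojectivity demands equality on the nose. The step ``promote orthogonality in the limit to genuine orthogonality of exact lifts'' is precisely what your proposal leaves unproved, and it is not true by any soft argument.

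The paper's proof avoids this by reversing the order: it lifts the \emph{orthogonality} before invoking semiprojectivity, and then never has to perturb a homomorphism. By Corollary~\ref{realStrictlyPos} choose self-$\tau$ strictly positive contractions $h \in A$, $k \in B$; then $\hat h = \phi((h,0))$ and $\hat k = \phi((0,k))$ are orthogonal self-$\tau$ positive contractions in $D/J$, and Lemma~\ref{LiftTwoPosOrthSelfTau} (lift $\hat h - \hat k$ to a self-adjoint self-$\tau$ contraction and take positive and negative parts, self-$\tau$ by Lemma~\ref{tauAndFunctions}) produces \emph{exactly} orthogonal self-$\tau$ positive lifts $\tilde h, \tilde k$ in $D$ itself, at stage zero. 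The hereditary subalgebras $A_n = \overline{h_n(D/J_n)h_n}$ and $B_n = \overline{k_n(D/J_n)k_n}$, with $h_n = \pi_n(\tilde h)$, $k_n = \pi_n(\tilde k)$, are then genuinely orthogonal at every stage; strict positivity of $h$ and $k$ forces $\phi$ to factor through $(A_\infty \oplus B_\infty, \gamma_\infty)$, and semiprojectivity of $(A,\tau)$ and $(B,\sigma)$ is applied one summand at a time to \emph{exact} morphisms along the tower $(A_n \oplus B_n, \gamma_n)$, returning to $D/J_n$ via the sum map $(x,y) \mapsto x+y$, which is a $*$-$\tau$-homomorphism because $h_n k_n = 0$. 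Your two equivariant ingredients --- self-$\tau$ positive elements obtained from the real part, and Lemma~\ref{tauAndFunctions} to keep the functional calculus $\tau$-compatible --- are exactly the right ones; they must simply be deployed before the lifting of the summands, not after.
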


Before we can do that however, we need to set up some theory. 

\begin{lemma} \label{LiftTwoPosOrthSelfTau}
The relations $0 \leq h,k \leq 1, h = h^\tau, k = k^\tau, hk = 0$ are liftable. 
\end{lemma}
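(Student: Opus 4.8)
The plan is to reparametrize the pair $(h,k)$ as a single self-adjoint element, so that the orthogonality constraint $hk = 0$ becomes automatic and the only thing left to arrange is self-$\tau$-ness, which functional calculus handles for free. Concretely, I would fix an arbitrary surjection $\pi \colon (B,\tau) \to (B/J,\tau)$ of $C^{*,\tau}$-algebras together with self-$\tau$ elements $\bar h, \bar k \in B/J$ satisfying $0 \le \bar h, \bar k \le 1$ and $\bar h \bar k = 0$, and produce self-$\tau$ positive contractions $h,k \in B$ with $hk = 0$ lifting them.

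Before giving the construction it is worth recording why the obvious attempt fails, since this is the real obstacle. One could lift $\bar h,\bar k$ separately to orthogonal positive contractions $a,b \in B$ (the orthogonal-positive-contraction relation is liftable in the plain $C^*$-category) and then symmetrize. But while $\tfrac{1}{2}(a + a^\tau)$ and $\tfrac{1}{2}(b + b^\tau)$ are self-$\tau$ positive contractions lifting $\bar h, \bar k$, their product is $\tfrac{1}{4}(ab^\tau + a^\tau b)$, which need not vanish: averaging and orthogonality are simply incompatible. The key observation that circumvents this is that a pair of orthogonal positive contractions is the \emph{same data} as a single self-adjoint contraction. Setting $\bar x = \bar h - \bar k$, one has that $\bar x$ is a self-adjoint contraction with $\bar h = \bar x_+$ and $\bar k = \bar x_-$ its positive and negative parts, and $\bar x^\tau = \bar h^\tau - \bar k^\tau = \bar x$, so $\bar x$ is self-$\tau$.

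Thus the problem reduces to lifting a self-adjoint, self-$\tau$ contraction, which is easy. First lift $\bar x$ to a self-adjoint contraction $y \in B$ by standard $C^*$-theory, then put $x = \tfrac{1}{2}(y + y^\tau)$. Since $\tau$ is $*$-preserving and norm preserving, $y^\tau$ is again a self-adjoint contraction and $\pi(y^\tau) = \pi(y)^\tau = \bar x^\tau = \bar x$; hence $x$ is a self-adjoint contraction with $x^\tau = x$ and $\pi(x) = \tfrac{1}{2}(\bar x + \bar x^\tau) = \bar x$.

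To finish, I would recover $h$ and $k$ by functional calculus. Let $f_\pm \in C([-1,1])$ be given by $f_\pm(t) = \max(\pm t, 0)$ and set $h = f_+(x)$, $k = f_-(x)$. As $\sigma(x) \subseteq [-1,1]$, these are positive contractions, and $hk = x_+ x_- = 0$; moreover $\pi(h) = f_+(\bar x) = \bar h$ and $\pi(k) = f_-(\bar x) = \bar k$ since functional calculus commutes with $\pi$. The self-$\tau$ property is exactly where Lemma~\ref{tauAndFunctions} enters: because $x$ is normal and self-$\tau$, it gives $h^\tau = f_+(x)^\tau = f_+(x^\tau) = f_+(x) = h$, and likewise $k^\tau = k$. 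This yields the desired lift. The only genuine idea in the argument is the reparametrization $(\bar h,\bar k) \leftrightarrow \bar x$, which trades the orthogonality relation—incompatible with naive $\tau$-symmetrization—for the automatic orthogonality of positive and negative parts, after which everything is routine.
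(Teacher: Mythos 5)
Your proposal is correct and follows essentially the same route as the paper: both encode the pair as the single self-adjoint, self-$\tau$ contraction $\bar h - \bar k$, lift it (with a $\tau$-averaging step to make the lift self-$\tau$), and recover the orthogonal pair via the positive/negative-part functional calculus, invoking Lemma~\ref{tauAndFunctions} for self-$\tau$-ness. Your write-up merely makes explicit two points the paper leaves implicit, namely the symmetrization $x = \tfrac{1}{2}(y + y^\tau)$ and the verification that $\bar h, \bar k$ are the positive and negative parts of $\bar x$.
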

\begin{proof}
Suppose we are given a $\tau$-invariant ideal $J$ in a $C^{*,\tau}$-algebra $B$, and suppose $h,k \in B / J$ satisfy the relations. Let $a = h - k$. Then $a$ is a a self-$\tau$ self-adjoint contraction. Thus we can lift it to a self-adjoint self-$\tau$ contraction in $B$, $\hat{a}$ say. Define $f \colon \R \to \R$ by $f(x) = (x + |x|)/2$. Then we know from $C^*$-algebra theory that $f(\hat{a})$ is a positive contractive lift of $h$, that $f(-\hat{a})$ is a positive contractive lift of $k$, and that $f(\hat{a})f(-\hat{a}) = 0$. Lemma~\ref{tauAndFunctions} tells us that $f(\hat{a})$ and $f(-\hat{a})$ are self-$\tau$.
\end{proof}

\begin{lemma} \label{tauSigmaUnital}
Let $(B,\tau)$ be a $C^{*,\tau}$-algebra. If $h \in (B,\tau)$ is strictly positive in $B$ then so is $h^\tau$. Hence $\Re_\tau(h)$ is strictly positive. 
\end{lemma}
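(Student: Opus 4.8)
The plan is to reduce everything to the fact that $\tau$ is a positivity-preserving bijection of $B$, and then to read off strict positivity from the characterization in terms of positive functionals.

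First I would record that $\tau$ preserves positivity in both directions. If $a \geq 0$ then $a = a^*$ and $\sigma(a) \subseteq [0,\infty)$, so since $\tau$ is $*$-preserving the element $a^\tau = (a^*)^\tau = (a^\tau)^*$ is self-adjoint, and by the remark following the definition of a reflection $\sigma(a^\tau) = \sigma(a) \subseteq [0,\infty)$; hence $a^\tau \geq 0$. As $\tau$ is involutive the converse holds too, so $a \geq 0 \iff a^\tau \geq 0$. In particular $h \geq 0$ gives $h^\tau \geq 0$.

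Next I would upgrade positivity to strict positivity. Recall that $h \geq 0$ is strictly positive in $B$ exactly when $\phi(h) > 0$ for every nonzero positive functional $\phi$ on $B$. Given such a $\phi$, the composite $\phi \circ \tau$ is again positive (it sends a positive $a$ to $\phi(a^\tau) \geq 0$ by the previous paragraph) and nonzero (because $\tau$ is surjective). Since $h$ is strictly positive we get $(\phi \circ \tau)(h) > 0$, and as $\phi(h^\tau) = (\phi \circ \tau)(h)$ this shows $\phi(h^\tau) > 0$ for every nonzero positive $\phi$; hence $h^\tau$ is strictly positive. If one prefers to avoid functionals, the same conclusion follows from $\overline{hBh} = B$ by applying the isometry $\tau$: anti-multiplicativity gives $\tau(hBh) = h^\tau B h^\tau$ and surjectivity gives $\tau(B) = B$, whence $\overline{h^\tau B h^\tau} = B$.

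Finally, for the ``hence'' clause: a strictly positive $h$ is self-adjoint, so $h^{*\tau} = h^\tau$ and $\Re_\tau(h) = (h + h^\tau)/2$. For any nonzero positive functional $\phi$ we then have $\phi(\Re_\tau(h)) = \tfrac12(\phi(h) + \phi(h^\tau)) \geq \tfrac12\phi(h) > 0$, using $h^\tau \geq 0$ and the strict positivity of $h$; thus $\Re_\tau(h)$ is strictly positive. I do not expect a serious obstacle here; the only point requiring care is that $\tau$ is anti-multiplicative rather than multiplicative, but this never enters the functional argument (which needs only that $\tau$ preserves positivity and is onto), and in the hereditary-subalgebra variant it merely reorders $hbh \mapsto h^\tau b^\tau h^\tau$, which is harmless since the middle factor already ranges over all of $B$.
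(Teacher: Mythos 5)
Your proof is correct and takes essentially the same route as the paper's, which also characterizes strict positivity via nonzero positive linear functionals and observes that $\phi \circ \tau$ is again a positive functional, so that $\phi(h^\tau) = (\phi \circ \tau)(h) > 0$. You merely fill in details the paper leaves implicit --- the spectral argument that $\tau$ preserves positivity, the nonvanishing of $\phi \circ \tau$, the explicit treatment of $\Re_\tau(h)$ --- together with a valid alternative via $\overline{hBh} = B$.
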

\begin{proof}
Let $\phi \colon B \to \C$ be a linear positive functional. Then we have, writing $\tau$ as a function,  
\[
	\phi(h^\tau) = \phi(\tau(h)) = (\phi \circ \tau)(h).
\]
Since $\tau$ is linear and maps positive elements to positive elements $\phi \circ \tau$ is a positive linear functional. But then if $\phi$ is non-zero we have 
\[
	\phi(h^\tau) = (\phi \circ \tau)(h) > 0. \qedhere
\]
\end{proof}

\begin{kor} \label{realStrictlyPos}
If $(B,\tau)$ is a separable $C^{*,\tau}$-algebra then it contains a self-$\tau$ positive element $h$ such that $\overline{hBh} = B$. 
\end{kor}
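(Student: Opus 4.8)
The plan is to take a strictly positive element of the underlying $C^*$-algebra and symmetrize it with respect to $\tau$. Recall first that every separable (equivalently $\sigma$-unital) $C^*$-algebra $B$ contains a strictly positive element, i.e.\ an $h_0 \ge 0$ with $\overline{h_0 B h_0} = B$; one obtains such an element by choosing a sequential approximate unit $(e_n)$ for $B$ and setting $h_0 = \sum_{n=1}^{\infty} 2^{-n} e_n$. This is pure $C^*$-theory and makes no reference to $\tau$, so I would simply quote it.

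The element $h_0$ need not be self-$\tau$, so the next step is to replace it by $h = \Re_\tau(h_0) = (h_0 + h_0^{*\tau})/2$. Since $h_0$ is positive it is self-adjoint, so $h_0^{*\tau} = h_0^\tau$ and $h = (h_0 + h_0^\tau)/2$. Here $h_0^\tau$ is again positive: $\tau$ is $*$-preserving, so $h_0^\tau$ is self-adjoint, and by the remark following the definition of a reflection $\sigma(h_0^\tau) = \sigma(h_0) \subseteq [0,\infty)$. Thus $h$ is a convex combination of two positive elements, hence positive and in particular self-adjoint; being in the range of $\Re_\tau$ it is a real element ($h^* = h^\tau$), and combining $h^* = h$ with $h^* = h^\tau$ gives $h^\tau = h$. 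So $h$ is a self-$\tau$ positive element, as required.

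It remains only to check that symmetrizing has not destroyed strict positivity, and this is exactly the content of Lemma~\ref{tauSigmaUnital}: since $h_0$ is strictly positive so is $h_0^\tau$, and adding a positive element to a strictly positive one leaves it strictly positive. Concretely, testing against an arbitrary nonzero positive functional $\phi$ one has $\phi(h) = \tfrac12(\phi(h_0) + \phi(h_0^\tau)) \ge \tfrac12 \phi(h_0) > 0$. Therefore $h$ is strictly positive, i.e.\ $\overline{hBh} = B$, which is the conclusion.

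I do not expect any serious obstacle here: the corollary amounts to the claim that the standard separable $C^*$-fact survives the $\tau$-symmetrization, and Lemma~\ref{tauSigmaUnital} has already isolated the single point — preservation of strict positivity — where $\tau$ interacts with the order structure. The only things to be careful about are the bookkeeping showing that $\Re_\tau$ of a positive element is both positive and genuinely self-$\tau$ (not merely real), and quoting the existence of a strictly positive element in the separable case correctly.
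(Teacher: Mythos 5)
Your proof is correct and follows exactly the route the paper intends: the corollary is stated as an immediate consequence of Lemma~\ref{tauSigmaUnital}, obtained by taking a strictly positive element $h_0$ of the separable $C^*$-algebra $B$ and replacing it with $\Re_\tau(h_0) = (h_0 + h_0^\tau)/2$, whose strict positivity is precisely the point that lemma isolates. Your additional bookkeeping (positivity of $h_0^\tau$ via spectrum, $h^\tau = h$ from $h^* = h^\tau$ together with self-adjointness, and the functional estimate $\phi(h) \ge \tfrac12\phi(h_0) > 0$) is all accurate.
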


A discussion of hereditary subalgebras in the context of real $C^*$-algebras
can be found in \cite{StaceyRealInfinite}.

We are now ready to prove Proposition~\ref{directSumsp}. 

\begin{proof}[Proof of Proposition~\ref{directSumsp}]
Since both $(A,\tau)$ and $(B,\tau)$ are separable we can use Corollary~\ref{realStrictlyPos} to find $h \in A$ and $k \in B$, positive contractions such that $h^\tau = h$, $k^\tau = k$, $\overline{hAh} = A$ and $\overline{kBk} = B$. Suppose we are given a $C^{*,\tau}$-algebra $(D,\tau)$ containing an increasing sequence of $\tau$-invariant ideals $J_1 \subseteq J_2 \subseteq \cdots$ and a $C^{*,\tau}$-homomorphism 
\[
	\phi \colon (A \oplus B,\tau \oplus \tau) \to (D/J,\tau),
\]
where $J = \overline{ \cup_n J_n}$. Let $\hat{h} = \phi((h,0))$ and $\hat{k} = \phi((k,0))$. Since $\hat{h}$ and $\hat{k}$ are orthogonal positive self-$\tau$ contractions we can, by Lemma~\ref{LiftTwoPosOrthSelfTau}, find positive orthogonal self-$\tau$ contractive lifts $\tilde{h}$, $\tilde{k}$ of them in $D$. For each $n \in \N \cup \{ \infty \}$ let $h_n = \pi_n(\tilde{h})$, $k_n = \pi_n(\tilde{k})$, $A_n = \overline{h_n (D / J_n) h_n}$, and $B_n = \overline{k_n (D / J_n) k_n}$. For each $n \in \N \cup \{ \infty \}$ the map $\gamma_n = \tau \vert_{A_n} \oplus \tau \vert_{B_n}$ is a reflection since $h_n$ and $k_n$ are self-$\tau$. Observe that we have
\[
  \overline{\hat{h} (D/J) \hat{h}} = \overline{h_\infty (D/J) h_\infty} = A_\infty \quad \text{ and } \quad \overline{\hat{k} (D/J) \hat{k}} = \overline{k_\infty (D/J) k_\infty)} = B_\infty.
\]
Define for each $n \in \N \cup \{ \infty \}$ a map
\[
	\alpha_n \colon (A_n \oplus B_n, \gamma_n) \to (D / J_n,\tau),
\]
by $\alpha((x,y)) = x + y$. It will be an $C^{*,\tau}$-homomorphism since $h_n k_n = 0$. Noticing that
\[
	\pi(\overline{\tilde{h} D \tilde{h}}) = A_\infty \quad \text{ and } \quad \pi(\overline{\tilde{k} D \tilde{k}}) = B_\infty, 
\]
we see there must be a $C^{*,\tau}$-homomorphism 
\[
	\psi \colon (A \oplus B,\tau \oplus \sigma) \to (A_\infty \oplus B_\infty, \gamma_\infty),
\]
such that $\phi \colon \alpha_\infty \circ \psi$. Hence we get the following commutative diagram for all $n \in \N$
\[
	\xymatrix{
		& (A_n \oplus B_n, \gamma_n) \ar@{->>}[d] \ar[r]^-{\alpha_n} & (D/J_n, \tau) \ar@{->>}[d] \\
		(A \oplus B,\tau \oplus \sigma) \ar@/_2pc/[rr]_-{\phi} \ar[r]^-{\psi} & (A_\infty \oplus B_\infty, \gamma_\infty) \ar[r]^-{\alpha_\infty} & (D/J,\tau)
	}
\]
Since $\gamma$ is a direct sum of two reflections, we can use the semiprojective of $(A,\tau)$ and $(B,\sigma)$, one at a time, to show that $(A \oplus B, \tau \oplus \sigma)$ is semiprojective. 
\end{proof}

\begin{rem}
We observe that we only used $(A,\tau)$ and $(B,\tau)$ separable to get strictly positive real elements $h,k$. So we might as well have assumed that $A$ and $B$ were $\sigma$-unital. Lemma~\ref{tauSigmaUnital} tells us that whether we define $(A,\tau)$ to be $\sigma$-unital when $A$ is or when $(A,\tau)$ contains a strictly positive real element, we get the same class of algebras. 
\end{rem}

The knowledge we have accumulated so far lets us take a small step towards showing that if $X$ is a finite one-dimensional CW-complex then $C(X,\id)$ is semiprojective. 

\begin{prop} \label{spFlower}
If $X$ is a wedge of circles (a bouquet) then the $C^{*,\tau}$-algebra $C(X, \id)$ is semiprojective. 
\end{prop}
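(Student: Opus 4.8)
The plan is to realize $C(X,\id)$ as the unitization of a finite direct sum of copies of $C_0((0,1),\id)$ and then invoke the closure results already in hand. Let $x_0 \in X$ be the wedge point, and suppose $X$ is a wedge of $n$ circles. Removing $x_0$ splits each circle into an open arc, so $X \setminus \{x_0\}$ is homeomorphic to the disjoint union $\coprod_{i=1}^n (0,1)$, and $X$ is the one-point compactification of this disjoint union: the re-added point reattaches both ends of each arc, recovering the $n$ circles meeting at $x_0$.

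First I would pass to function algebras. We have $C_0(X \setminus \{x_0\}) \cong \bigoplus_{i=1}^n C_0((0,1))$, and since the reflection on $C(X,\id)$ is the identity, its restriction to the $\tau$-invariant ideal $C_0(X \setminus \{x_0\})$ is again the identity and does not permute the summands, so it splits as $\bigoplus_{i=1}^n \id$. Thus as $C^{*,\tau}$-algebras
\[
  C_0(X \setminus \{x_0\}, \id) \cong \bigoplus_{i=1}^n C_0((0,1), \id).
\]
Each summand $C_0((0,1),\id)$ is separable and was already shown to be semiprojective (its unitization is $C(S^1,\id)$), so a finite induction on Proposition~\ref{directSumsp} shows the direct sum is semiprojective.

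It remains to identify the unitization and match up the reflections. Since $C_0(Y)^\sim \cong C(Y^+)$ for locally compact Hausdorff $Y$, we get $\widetilde{C_0(X\setminus\{x_0\})} \cong C(X)$ at the level of $C^*$-algebras. By Lemma~\ref{onlyOneWayToUnitize} the reflection on the unitization is forced to fix the unit and to restrict to the original reflection, which here is $\id$; hence the unitized reflection is again $\id$, matching the identity reflection on $C(X)$, and therefore
\[
  \widetilde{C_0(X\setminus\{x_0\}, \id)} \cong C(X,\id)
\]
as $C^{*,\tau}$-algebras. Applying the corollary following Lemma~\ref{spIffUnitizationSp}, $C(X,\id)$ is semiprojective because its de-unitization $\bigoplus_{i=1}^n C_0((0,1),\id)$ is. The computations are routine; the only point demanding care is the bookkeeping of reflections --- verifying that the topological splitting of $X \setminus \{x_0\}$ and the one-point compactification are compatible with the reflection at each stage --- and since $\tau = \id$ throughout these checks are immediate, so I expect no genuine obstacle, the content being carried entirely by the direct-sum and unitization closure results.
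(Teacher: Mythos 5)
Your proposal is correct and follows essentially the same route as the paper: realize $C(X,\id)$ as the unitization of $\bigoplus_{i=1}^n C_0((0,1),\id)$, apply Proposition~\ref{directSumsp} for the direct sum, and conclude via the corollary to Lemma~\ref{spIffUnitizationSp}. The paper states this in three lines; your added bookkeeping (the one-point compactification picture and the reflection check via Lemma~\ref{onlyOneWayToUnitize}) just makes explicit what the paper leaves implicit.
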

\begin{proof}
By assumption 
\[
	C(X, \id) \cong \left( \bigoplus_{i=1}^n C_0((0,1),\id) \right)^{\widetilde{\quad}} ,
\]
for some $n \in \N$. By Proposition~\ref{directSumsp} $\bigoplus_{i=1}^n C_0((0,1),\id)$ is semiprojective since each summand is. So by Corollary~\ref{spIffUnitizationSp} $C(X,\id)$ is semiprojective. 
\end{proof}

The above proposition will later be the basis step of an induction proof. 

\begin{rem}
If $X$ is a wedge of two circles, then we can put a reflection on $C(X)$ by mapping one circle to the other. This reflection is not a direct sum of two reflections on the circle. Hence showing that the $C^{*,\tau}$-algebra it defines is semiprojective requires different techniques than the ones we have just used. 
\end{rem}



\section{Multiplier algebras}

In this section we will study multiplier and corona algebras of $C^{*,\tau}$-algebras. The idea is that we already have multiplier algebras at our disposal. So the main body of work lies in showing that we can extend a reflection on $A$ to a reflection on $M(A)$.

\subsection{A reflection on $M(A)$}

The following theorem is in \cite{kasparov1980}. We present it here with a few more details. 

\begin{thm}
Suppose $(A,\tau)$ is a $C^{*,\tau}$-algebra. There is an operation $\tau$ on $M(A)$ defined by 
\[
	m^{\tau}a=\left(a^{\tau}m\right)^{\tau}, \quad \text{ and } \quad am^{\tau}=\left(ma^{\tau}\right)^{\tau}
\]
for $a$ in $A$ and $m$ in $M(A),$ and $(M(A),\tau)$ is a $C^{*,\tau}$-algebra, and the $C^{*}$-inclusion
\[
	\iota \colon A \rightarrow M(A),
\]
is also a $C^{*,\tau}$-homomorphism. 
\end{thm}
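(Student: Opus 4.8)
The plan is to take the formulas in the statement as \emph{definitions} of $m^\tau$ and then verify, in order, that (i) $m^\tau$ is a well-defined element of $M(A)$, (ii) the two defining formulas are consistent with each other and with the multiplier-pair axiom, (iii) the assignment $m \mapsto m^\tau$ is a reflection, and (iv) the inclusion $\iota$ intertwines the $\tau$'s. Throughout I will use the elementary facts established earlier: that $\tau$ is $\R$-linear, $*$-preserving, anti-multiplicative, involutive ($\tau\circ\tau=\id$), and (by the remark following its definition) automatically isometric and continuous.

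First I would recall that a multiplier $m\in M(A)$ is a pair of maps $L_m,R_m\colon A\to A$ (left and right multiplication) satisfying $aL_m(b)=R_m(a)b$, i.e.\ $a(mb)=(am)b$. To define $m^\tau$ I set its left action by $L_{m^\tau}(a)=(a^\tau m)^\tau$ and its right action by $R_{m^\tau}(a)=(m a^\tau)^\tau$; these are exactly the two displayed formulas $m^\tau a=(a^\tau m)^\tau$ and $am^\tau=(ma^\tau)^\tau$. Each is a composition of the bounded maps $\tau$, left/right multiplication by $m$, and $\tau$ again, so each is a bounded $\R$-linear map $A\to A$; I must check they are in fact $\C$-linear (which follows since $a\mapsto a^\tau$ is conjugate-linear and the two conjugations cancel) and bounded by $\|m\|$ (using that $\tau$ is isometric). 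The crux of step (i)--(ii) is verifying the compatibility $a(m^\tau b)=(am^\tau)b$ so that the pair is a genuine multiplier. This is where I expect the real work: I would compute
\[
a(m^\tau b)=a\bigl((b^\tau m)^\tau\bigr)
\]
and
\[
(am^\tau)b=\bigl(ma^\tau\bigr)^\tau b,
\]
apply $\tau$ to one side, and use anti-multiplicativity $\tau(xy)=\tau(y)\tau(x)$ together with the multiplier axiom for $m$ itself, $a^\tau(mb^\tau)=(a^\tau m)b^\tau$, to transform one expression into the other. \textbf{I expect this consistency check to be the main obstacle}, since it is the one place where the anti-multiplicativity of $\tau$ and the defining identity of $m$ must be braided together correctly, and a sign or order error in the bookkeeping is easy to make.

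Once $m^\tau\in M(A)$ is established, the remaining verifications are routine. I would check $\R$-linearity and $*$-preservation of $m\mapsto m^\tau$ by testing against the actions on $A$ and invoking that $\tau$ has these properties; anti-multiplicativity $(mn)^\tau=n^\tau m^\tau$ by comparing left actions and again using anti-multiplicativity of $\tau$ on $A$; and involutivity $(m^\tau)^\tau=m$ from $\tau\circ\tau=\id$ on $A$. These together show $\tau$ is a reflection on $M(A)$, so $(M(A),\tau)$ is a $C^{*,\tau}$-algebra. Finally, for the inclusion $\iota$: since $A$ embeds in $M(A)$ with left action $L_a(b)=ab$, I would verify that the reflection just defined restricts on $\iota(A)$ to the original $\tau$ on $A$, i.e.\ that $(\iota(a))^\tau=\iota(a^\tau)$. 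Testing the left action gives $(\iota(a))^\tau b=(b^\tau a)^\tau=a^\tau b=\iota(a^\tau)b$, using anti-multiplicativity and involutivity of $\tau$, which is exactly the statement that $\iota$ is a $C^{*,\tau}$-homomorphism. Since the formulas were essentially forced by requiring $\iota$ to intertwine the reflections, I expect this last step to drop out of the same computation as the well-definedness check.
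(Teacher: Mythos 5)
Your proposal is correct and takes essentially the same route as the paper's proof: define $m^\tau$ through the pair $L(a)=(a^\tau m)^\tau$, $R(a)=(ma^\tau)^\tau$, verify that they form a multiplier via anti-multiplicativity of $\tau$ together with the multiplier identities for $m$ (the compatibility check $R(a)b=(b^\tau m a^\tau)^\tau=aL(b)$ is exactly the paper's ``crux'' computation), then test involutivity, linearity, $*$-preservation and anti-multiplicativity of $m\mapsto m^\tau$ against elements of $A$, and finish with the identical computation $\iota(a)^\tau b=(b^\tau a)^\tau=a^\tau b=\iota(a^\tau)b$. One small caveat: in this paper a reflection is $\C$-linear (like the transpose; the \emph{antilinear} involution is $a\mapsto a^{*\tau}$), so your aside that $\C$-linearity of $L_{m^\tau}$ follows because ``$a\mapsto a^\tau$ is conjugate-linear and the two conjugations cancel'' rests on a wrong premise, though the conclusion itself is immediate from $\C$-linearity of $\tau$ and does not affect the argument.
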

\begin{proof}
Consider for a moment a fixed $m$ in $M(A).$ Define $L \colon A \rightarrow A$ and $R \colon A \rightarrow A$ by
\[
	L(a) = \left(a^{\tau}m\right)^{\tau} \quad \text{ and } \quad R(a)=\left(ma^{\tau}\right)^{\tau}.
\]
For all $a$ and $b$ in $A,$
\[
L(ab) = \left(b^{\tau}a^{\tau}m\right)^{\tau} = \left(a^{\tau}m\right)^{\tau}b = L(a)b,
\]
\[
R(ab) = \left(mb^{\tau}a^{\tau}\right)^{\tau} = a\left(mb^{\tau}\right)^{\tau} = aR(b),
\]
and 
\[
R(a)b = \left(ma^{\tau}\right)^{\tau}b = \left(b^{\tau}ma^{\tau}\right)^{\tau} = a\left(b^{\tau}m\right)^{\tau} = aL(b),
\]
so $(L,M)$ is an element of $M(A),$ which we denote $m^{\tau}$. Notice $m^{\tau}$ is specified within all multipliers by either one of the formulas
\[
m^{\tau}a = \left(a^{\tau}m\right)^{\tau}, \quad \text{ or } \quad am^{\tau} = \left(ma^{\tau}\right)^{\tau}.
\]
We claim that the operation defined above, on all multipliers $m\mapsto m^{\tau}$, makes $M(A)$ a $C^{*,\tau}$-algebra. For any multiplier $m$, and any $a$ in $A$,
\[
m^{\tau\tau}a = \left(a^{\tau}m^{\tau}\right)^{\tau} = \left(ma^{\tau\tau}\right)^{\tau\tau} = ma,
\]
so $\tau\circ\tau=\mathrm{id}$. For $n$ in $M(A)$ and $\alpha$ in $\mathbb{C}$, 
\[
(\alpha m+n)^{\tau}a = \left(a^{\tau}(\alpha m+n)\right)^{\tau} = \alpha m^{\tau}a+n^{\tau}a = (\alpha m^{\tau}+n^{\tau})a,
\]
\[
(mn)^{\tau}a = \left(a^{\tau}mn\right)^{\tau} = ((m^{\tau}a)^{\tau}n)^{\tau} = n^{\tau}(m^{\tau}a) = (n^{\tau}m^{\tau})a,
\]
and
\[
\left(m^{*}\right)^{\tau}a=\left(a^{\tau}m^{*}\right)^{\tau}=\left(ma^{*\tau}\right)^{*\tau}=(a^{*}m^{\tau})^{*}=(m^{\tau})^{*}a,
\]
which means $\tau$ commutes with $*$, is anti-multiplicative, and
$\mathbb{C}$-linear. 

If $a$ is in $A,$ then for any other $b$ in $A$,
\[
\iota(a)^{\tau}b=(b^{\tau}\iota(a))^{\tau}=(b^{\tau}a)^{\tau}=a^{\tau}b=\iota(a^{\tau})b,
\]
so $\iota(a)^{\tau}=\iota(a^{\tau}).$ 
\end{proof}

We now observe that a few more standard constructions automatically respect reflections.

\begin{lemma}
Suppose $A$ is a $C^{*,\tau}$-subalgebra of $B,$ where $(B,\tau)$ is a given $C^{*,\tau}$-algebra. The idealizer
\[
I(A \colon B)=\left\{ b \in B \mid bA+Ab \subseteq A \right\}, 
\]
is self-$\tau,$ and so a $C^{*,\tau}$-subalgebra of $B$ containing $A$ as a self-$\tau$ ideal.
\end{lemma}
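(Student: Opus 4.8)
The plan is to reduce the whole statement to the single claim that $I(A:B)$ is self-$\tau$, since everything else is either standard $C^*$-theory or formal. First I would invoke the well-known fact (in the plain $C^*$-setting) that $I(A:B)$ is a closed $C^*$-subalgebra of $B$, that it contains $A$, and that $A$ sits inside it as a closed two-sided ideal --- indeed $I(A:B)$ is by construction the largest subalgebra of $B$ in which $A$ is an ideal. Granting the self-$\tau$ claim, the restriction $\tau\vert_{I(A:B)}$ is automatically a reflection (a $*$-preserving, $\R$-linear, anti-multiplicative involution restricted to a self-$\tau$ $C^*$-subalgebra remains one), so $(I(A:B),\tau\vert_{I(A:B)})$ is a $C^{*,\tau}$-subalgebra of $B$; and $A$ is a self-$\tau$ ideal of it, being an ideal by the $C^*$-fact and self-$\tau$ by hypothesis.

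The heart of the matter is thus the self-$\tau$ claim. Let $b \in I(A:B)$; I must show $b^{\tau}A \subseteq A$ and $Ab^{\tau}\subseteq A$. Fix $a \in A$ and use that $\tau$ is an involution and anti-multiplicative to compute
\[
	(b^{\tau}a)^{\tau} = a^{\tau}(b^{\tau})^{\tau} = a^{\tau}b.
\]
Since $A$ is a $C^{*,\tau}$-subalgebra we have $a^{\tau}\in A$, and since $b\in I(A:B)$ we get $a^{\tau}b \in A$. Applying $\tau$ once more and using that $A$ is self-$\tau$ yields $b^{\tau}a = (a^{\tau}b)^{\tau}\in A$. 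The computation for the other side is symmetric: $(ab^{\tau})^{\tau} = ba^{\tau}\in A$ because $a^{\tau}\in A$ and $bA\subseteq A$, whence $ab^{\tau} = (ba^{\tau})^{\tau}\in A$. This shows $b^{\tau}\in I(A:B)$, so $I(A:B)$ is self-$\tau$.

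The step I expect to require the most care is simply keeping the order-reversal of $\tau$ straight and noticing where each hypothesis is used: the crucial point is that $A$ being a $C^{*,\tau}$-subalgebra (so $A^{\tau}=A$) is exactly what keeps the intermediate elements $a^{\tau}$ inside $A$ throughout --- without self-$\tau$-ness of $A$ the argument would not close up. There is no genuine analytic obstacle here: closedness of $I(A:B)$ is inherited from the $C^*$-case, and $\tau$ is automatically norm-preserving and continuous, so no separate boundedness or density argument is needed.
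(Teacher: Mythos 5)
Your proposal is correct and follows essentially the same route as the paper: the identities $(b^{\tau}a)^{\tau}=a^{\tau}b$ and $(ab^{\tau})^{\tau}=ba^{\tau}$, combined with $a^{\tau}\in A$ and the self-$\tau$-ness of $A$, are exactly the paper's computation. Your version merely makes explicit two points the paper leaves implicit --- that the final step applies $\tau$ once more using $A^{\tau}=A$, and that the $C^*$-facts about the idealizer are quoted rather than reproved --- which is fine.
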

\begin{proof}
Suppose $b$ is in the idealizer and $a$ is in $A$. Then $a^{\tau}\in A$ and so $(b^\tau a)^\tau = a^\tau b$ is in $A$. Hence $b^\tau a \in A$. Likewise $(a b^\tau)^\tau \in A$ so $a b^\tau \in A$.
\end{proof}

\begin{thm} \label{ThetaIsTau}
Suppose $(B,\tau)$ is a $C^{*,\tau}$-algebra and $A \triangleleft (B,\tau)$ is a self-$\tau$ ideal. The unique $*$-homomorphism
$\theta \colon B \rightarrow M(A)$ for which $\theta(a)=\iota(a)$ for all $a$ in $A$, is automatically a $*$-$\tau$-homomorphism.
\end{thm}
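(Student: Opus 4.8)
The plan is to verify the one condition that is not already built into the hypotheses. By construction $\theta$ is the canonical $*$-homomorphism $B \to M(A)$ coming from the universal property of the multiplier algebra, so $*$-linearity and multiplicativity are free. What remains is to show $\theta(b^\tau) = \theta(b)^\tau$ for every $b \in B$. Since a multiplier of $A$ is completely determined by its left and right actions on $A$, I would reduce the desired identity to checking that $\theta(b^\tau)$ and $\theta(b)^\tau$ act in the same way on every $a \in A$.

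First I would record the two descriptions of the actions that enter. From the definition of $\theta$, the multiplier $\theta(b)$ acts by $\theta(b)a = ba$ and $a\,\theta(b) = ab$ for $a \in A$, both landing in $A$ because $A$ is an ideal. From the theorem constructing the reflection on $M(A)$, the element $\theta(b)^\tau$ is pinned down by either of the formulas $m^\tau a = (a^\tau m)^\tau$ or $a\,m^\tau = (m a^\tau)^\tau$. The core of the argument is then a one-line computation for $a \in A$, using that $a^\tau \in A$ (as $A$ is self-$\tau$), that $\tau$ is anti-multiplicative, and that $\tau \circ \tau = \id$:
\[
\theta(b)^\tau a = (a^\tau \theta(b))^\tau = (a^\tau b)^\tau = b^\tau a = \theta(b^\tau)\,a.
\]
The symmetric calculation on the right, $a\,\theta(b)^\tau = (\theta(b) a^\tau)^\tau = (b a^\tau)^\tau = a b^\tau = a\,\theta(b^\tau)$, shows that the two multipliers have the same left and right actions.

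The step I expect to demand the most care is the final passage from equality of actions to equality $\theta(b^\tau) = \theta(b)^\tau$ in $M(A)$. This is the standard fact that a multiplier is determined by its behaviour on the essential ideal $A$; concretely, the left-multiplication representation of $M(A)$ on $A$ is faithful, since a multiplier $m$ with $ma = 0$ for all $a$ has right part $R$ satisfying $R(a)b = a(mb) = 0$, whence $R(a)R(a)^* = 0$ and $m = 0$. Once this is invoked, the two computations above finish the proof, and the only real bookkeeping is to keep straight which of the two characterizations of $m^\tau$ is being applied on each side. Beyond that, no genuine obstacle arises.
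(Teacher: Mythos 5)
Your proposal is correct and follows essentially the same route as the paper: the identical chain $\theta(b)^{\tau}a=(a^{\tau}\theta(b))^{\tau}=(a^{\tau}b)^{\tau}=b^{\tau}a=\theta(b^{\tau})a$ is the paper's entire proof, which (like you) relies on the fact, already noted in the construction of $\tau$ on $M(A)$, that a multiplier is pinned down by its left action on $A$. Your extra checks---the symmetric right-action computation and the explicit faithfulness argument via $R(a)R(a)^{*}=0$---are sound but only make explicit what the paper leaves implicit.
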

\begin{proof}
We know $\theta(b)a=ba$ defines the only possible $*$-homomorphism from $B$ to $M(A)$ satisfying $\theta(a)=\iota(a).$ For $b$  in $B$  and $a$ in $A$ we compute
\[
\theta(b)^{\tau}a=\left(a^{\tau}\theta(b)\right)^{\tau}=\left(\theta(a^{\tau}b)\right)^{\tau}=\theta((a^{\tau}b)^{\tau})=\theta(b^{\tau}a)=\theta(b^{\tau})a,
\]
which proves $\theta$ is $\tau$-preserving.
\end{proof}

\begin{lemma} \label{MultMapIsTao}
Let $\varphi \colon (A,\tau)\rightarrow(B,\tau)$ be a proper $*$-$\tau$-homomorphism between $\sigma$-unital $C^{*,\tau}$-algebras.
The unique $*$-homomorphism $\widehat{\varphi} \colon M(A) \to M(B)$ that extends $\varphi$ is actually a $*$-$\tau$-homomorphism.
\end{lemma}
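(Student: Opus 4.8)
The plan is to exploit that $\widehat{\varphi}$ is already a $*$-homomorphism and merely verify that it intertwines the two reflections, checking equality of multipliers through their action on $B$. Since $\varphi$ is proper and $A,B$ are $\sigma$-unital, the extension $\widehat{\varphi}\colon M(A)\to M(B)$ exists by the standard multiplier theory and restricts to $\varphi$ on $A$; moreover properness means $\varphi$ is non-degenerate, so $\overline{\varphi(A)B}=B$. Because $A$ is an ideal in $M(A)$, for any $m\in M(A)$ and $a\in A$ the products $ma$ and $am$ lie in $A$, and multiplicativity of $\widehat{\varphi}$ together with $\widehat{\varphi}\vert_A=\varphi$ yields the intertwining identities
\[
  \widehat{\varphi}(m)\varphi(a)=\varphi(ma), \qquad \varphi(a)\widehat{\varphi}(m)=\varphi(am).
\]

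To prove that $\widehat{\varphi}$ is $\tau$-preserving I would fix $m\in M(A)$ and show that the two multipliers $\widehat{\varphi}(m^\tau)$ and $\widehat{\varphi}(m)^\tau$ of $B$ agree. It suffices to show that they act identically as left multipliers, since a double centralizer of a $C^*$-algebra is pinned down by its left part; and since $\overline{\varphi(A)B}=B$, it is enough to check the agreement on elements of the form $\varphi(a)$, $a\in A$. Unwinding the reflection on $M(B)$ from the earlier theorem, namely $n^\tau\beta=(\beta^\tau n)^\tau$ for $n\in M(B)$, $\beta\in B$, I would run the computation
\[
  \widehat{\varphi}(m)^\tau\varphi(a)
  =\bigl(\varphi(a)^\tau\,\widehat{\varphi}(m)\bigr)^\tau
  =\bigl(\varphi(a^\tau)\,\widehat{\varphi}(m)\bigr)^\tau
  =\varphi(a^\tau m)^\tau
  =\varphi\bigl((a^\tau m)^\tau\bigr)
  =\varphi(m^\tau a)
  =\widehat{\varphi}(m^\tau)\varphi(a),
\]
using in turn the definition of $\tau$ on $M(B)$, that $\varphi$ preserves $\tau$, the right intertwining identity, again the $\tau$-preservation of $\varphi$, the defining relation $m^\tau a=(a^\tau m)^\tau$ for $\tau$ on $M(A)$, and finally the left intertwining identity.

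From this equality on every $\varphi(a)$ I would conclude the result: multiplying on the right by an arbitrary $b\in B$ gives $\widehat{\varphi}(m)^\tau\varphi(a)b=\widehat{\varphi}(m^\tau)\varphi(a)b$, and since such products are dense in $B$, the two multipliers have the same left action on all of $B$, hence coincide. As $m$ was arbitrary, $\widehat{\varphi}(m^\tau)=\widehat{\varphi}(m)^\tau$, so $\widehat{\varphi}$ is a $*$-$\tau$-homomorphism.

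The routine part is the displayed chain of identities; the only delicate points are the bookkeeping of which reflection (on $M(A)$ or on $M(B)$) is applied at each step, and the reduction showing that agreement of the left action on the dense set $\varphi(A)B$ determines the multiplier. I expect this reduction --- justified by non-degeneracy of the proper homomorphism and by the fact that the left multiplier determines the right one via the centralizer compatibility $aL(b)=R(a)b$ --- to be the main (though still standard) obstacle.
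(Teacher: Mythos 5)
Your proposal is correct and follows essentially the same route as the paper: both unwind the reflection on $M(B)$ via $n^\tau b = (b^\tau n)^\tau$, use $\tau$-preservation of $\varphi$ and the intertwining identities coming from properness, and apply the defining relation $m^\tau a = (a^\tau m)^\tau$ on $M(A)$ to get $\widehat{\varphi}(m)^\tau$ and $\widehat{\varphi}(m^\tau)$ agreeing on $\varphi(A)B$. The only (cosmetic) difference is that the paper carries the extra factor $b$ through the whole chain, computing $\widehat{\varphi}(m)^{\tau}\varphi(a)b = \widehat{\varphi}(m^{\tau})\varphi(a)b$ directly, whereas you first prove equality against $\varphi(a)$ and then multiply by $b$ and invoke density of $\varphi(A)B$ --- both reductions are standard and valid.
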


\begin{proof}
The fact that $\varphi$ is proper
tells us $B=\varphi(A)B=B\varphi(A).$ The defining formulas
for $\widehat{\varphi}$ are
\[
\widehat{\varphi}(m)\varphi(a)b=\varphi(ma)b
\]
and 
\[
b\varphi(a)\widehat{\varphi}(m)=b\varphi(am).
\]
Therefore
\begin{align*}
\widehat{\varphi}(m)^{\tau}\varphi(a)b & = ((\varphi(a)b)^{\tau}\widehat{\varphi}(m))^{\tau}\\
 & =(b^{\tau}\varphi(a^{\tau})\widehat{\varphi}(m))^{\tau}\\
 & =(b^{\tau}\varphi(a^{\tau}m))^{\tau}\\
 & =\varphi(m^{\tau}a)b\\
 & =\widehat{\varphi}(m^{\tau})\varphi(a)b. \qedhere
\end{align*}
\end{proof}

We get ``multiplier realization'' for free.

\begin{thm} \label{multRealization}
Let C(E) denote the corona of a $\sigma$-unital $C^{*,\tau}$-algebra $(E,\tau),$ and let $D$ and $N$ be separable $C^{*,\tau}$-subalgebras of $C(E).$ Suppose 
\[
A\subseteq C(E)\cap D^{\prime}\cap N^{\perp},
\]
is a $\sigma$-unital $C^{*,\tau}$-subalgebra. Then the $*$-$\tau$-homomorphism
\[
\theta \colon I(A \colon C(E)\cap D^{\prime}\cap N^{\perp})\rightarrow M(A)
\]
is onto.
\end{thm}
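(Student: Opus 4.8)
The plan is to leverage the corresponding $C^*$-algebra ("multiplier realization") theorem, which says that for separable subalgebras $D, N$ of a corona $C(E)$ of a $\sigma$-unital $C^*$-algebra, and a $\sigma$-unital subalgebra $A \subseteq C(E) \cap D' \cap N^\perp$, the canonical $*$-homomorphism $\theta \colon I(A \colon C(E)\cap D'\cap N^\perp) \to M(A)$ is surjective. This is a known result (Kasparov-type), and the $C^{*,\tau}$ statement adds only that everything respects the reflection. Since $\theta$ is already surjective at the $C^*$ level, the real content is entirely bookkeeping: I must verify that $\theta$ is a genuine $*$-$\tau$-homomorphism between $C^{*,\tau}$-algebras, so that its image is a $C^{*,\tau}$-subalgebra and the surjectivity statement makes sense in $\mathbf{C^{*,\tau}}$.

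\smallskip

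First I would check that all the algebras in sight are in fact $C^{*,\tau}$-algebras. The corona $C(E) = M(E)/E$ is a $C^{*,\tau}$-algebra: the reflection on $M(E)$ comes from the first theorem of this section, and passing to the quotient by the self-$\tau$ ideal $E$ gives a reflection on $C(E)$ by the quotient lemma. The commutant $D'$ is self-$\tau$ because $\tau$ is anti-multiplicative: if $x$ commutes with every $d \in D$ and $D$ is self-$\tau$, then for $d \in D$ one computes $x^\tau d = x^\tau (d^\tau)^\tau = (d^\tau x)^\tau = (x d^\tau)^\tau = d x^\tau$, using that $d^\tau \in D$. A similar computation shows the annihilator $N^\perp = \{x : xN = Nx = 0\}$ is self-$\tau$, since $\tau(xn) = n^\tau x^\tau$ and $N^\perp$ is stable under $*$. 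Hence $C(E) \cap D' \cap N^\perp$ is a self-$\tau$ subalgebra, and I assume $A$ inside it is self-$\tau$ (so that $(A,\tau)$ is a $C^{*,\tau}$-algebra). By the idealizer lemma already proved, $I(A \colon C(E)\cap D'\cap N^\perp)$ is self-$\tau$ and contains $A$ as a self-$\tau$ ideal.

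\smallskip

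Next I would identify $\theta$ as the map of Theorem~\ref{ThetaIsTau}. Set $B = I(A \colon C(E)\cap D'\cap N^\perp)$; then $A \triangleleft (B,\tau)$ is a self-$\tau$ ideal, and $\theta \colon B \to M(A)$ is exactly the unique $*$-homomorphism with $\theta(a) = \iota(a)$. Theorem~\ref{ThetaIsTau} then gives for free that $\theta$ is a $*$-$\tau$-homomorphism, so no separate verification of $\tau$-compatibility is needed. The surjectivity of $\theta$ is precisely the conclusion of the $C^*$-version of multiplier realization applied to the underlying $C^*$-algebras, and since we have just shown $\theta$ is $*$-$\tau$, its (dense, in fact full) image as a $*$-$\tau$-homomorphism onto $M(A)$ is what we want.

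\smallskip

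\textbf{The main obstacle} is establishing the underlying $C^*$-surjectivity of $\theta$ itself, which is the substantive analytic content (a stabilization/quasicentral-approximate-unit argument in the corona). But since we are "getting multiplier realization for free," I would cite the $C^*$-result and reduce the $C^{*,\tau}$ statement to it; the only genuinely new work is the self-$\tau$ verifications for $D'$, $N^\perp$, and the identification of $\theta$ with the map of Theorem~\ref{ThetaIsTau}, all of which are short. One point to watch is that the reflection $\tau$ on $M(A)$ used implicitly in the codomain must be the one induced by $A$'s reflection via the first theorem of this section, and one should confirm it agrees with the restriction of the ambient corona reflection under $\theta$ — but this is guaranteed precisely because $\theta$ is a $*$-$\tau$-homomorphism extending $\iota$, and $M(A)$'s reflection is uniquely determined by its action on the ideal $A$.
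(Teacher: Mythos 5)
Your proposal is correct and follows the paper's own proof exactly: the paper likewise cites Corollary~3.2 of the Eilers--Loring--Pedersen paper for the underlying $C^*$-surjectivity and observes that Theorem~\ref{ThetaIsTau} makes $\theta$ automatically a $*$-$\tau$-homomorphism. Your additional verifications that $D'$ and $N^{\perp}$ are self-$\tau$ (via anti-multiplicativity of $\tau$) are correct and in fact supply bookkeeping the paper leaves implicit.
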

\begin{proof}
We know that $\theta$ is onto, by Corollary 3.2 of \cite{ELPBusby}. All we are asserting here is that this map is now a morphism in the category of $C^{*,\tau}$-algebras.
\end{proof}

\subsection{Corona extendible morphisms}

\begin{Def}
We say a morphism of $C^{*,\tau}$-algebras
$\gamma \colon (A,\tau )\rightarrow (B,\tau)$ is
\emph{corona extendible} if, for every $*$-$\tau$-homomorphism
$\varphi \colon A \rightarrow C(E)$ with $E$
a $\sigma$-unital $C^{*,\tau}$-algebra, there exists a $*$-$\tau$-homomorphism $\widehat{\varphi} \colon B \rightarrow C(E)$ so that $\widehat{\varphi}\circ\gamma=\varphi$.
\end{Def}

\begin{thm} \label{Proj2CE}
Suppose $0 \rightarrow A \rightarrow X \rightarrow P \rightarrow 0$ is a
short-exact sequence of $\sigma$-unital $C^{*,\tau}$-algebras
If $P$ is projective then the inclusion $A\rightarrow X$ is corona extendible.
Moreover, the unitization of this map $\widetilde{A} \rightarrow \widetilde{X}$
is also corona extendible.
\end{thm}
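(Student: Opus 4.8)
The plan is to reduce the extension problem to a single relative lifting problem for $P$, solved by projectivity of $P$, after realizing the relevant multiplier algebra inside the corona via Theorem~\ref{multRealization}. Fix a $*$-$\tau$-homomorphism $\varphi\colon A\to C(E)$ with $E$ $\sigma$-unital, and let $\gamma\colon A\to X$ be the inclusion. Set $B=\varphi(A)$; this is a $\sigma$-unital $C^{*,\tau}$-subalgebra of $C(E)$ (it is self-$\tau$ and closed, being the image of a $*$-$\tau$-homomorphism, and $\sigma$-unital since $A$ is and $\varphi$ maps onto $B$). Applying Theorem~\ref{multRealization} with $D=N=0$ gives a surjective $*$-$\tau$-homomorphism $\theta_B\colon Q\to M(B)$, where $Q=I(B\colon C(E))\subseteq C(E)$ and $\theta_B|_B$ is the canonical inclusion $B\hookrightarrow M(B)$. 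Since $\varphi\colon A\to B$ is a surjection it is proper, so Lemma~\ref{MultMapIsTao} extends it to a $*$-$\tau$-homomorphism $\Phi\colon M(A)\to M(B)$, and because $A\triangleleft X$ is a self-$\tau$ ideal, Theorem~\ref{ThetaIsTau} supplies a $*$-$\tau$-homomorphism $\theta\colon X\to M(A)$ with $\theta|_A=\iota$. I would first record the identification
\[
  Q\;\cong\;M(B)\oplus_{C(B)}(Q/B),\qquad q\mapsto(\theta_B(q),\,q+B),
\]
valid as $C^{*,\tau}$-algebras: it is a $\tau$-equivariant $C^*$-isomorphism because $\theta_B$ is onto, $\ker\theta_B\cap B=0$, and the maps $M(B)\to C(B)$ and $\bar\theta_B\colon Q/B\to C(B)$ induced by $\theta_B$ agree on images, so the target carries the $C^{*,\tau}$-structure from the pull-back theorem.

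The heart of the construction is to produce the two coordinate maps $X\to M(B)$ and $X\to Q/B$ required by this pullback. For the first take $\Phi\circ\theta$. Composing with the corona quotient $\rho_B\colon M(B)\to C(B)$ annihilates $A$ (for $a\in A$ one has $\Phi(\theta(a))=\varphi(a)\in B=\ker\rho_B$), so $\rho_B\circ\Phi\circ\theta$ descends through the quotient $\kappa\colon X\to P$ to a $*$-$\tau$-homomorphism $\beta\colon P\to C(B)$; concretely $\beta=\bar\Phi\circ\eta$, where $\eta\colon P\to C(A)$ is the Busby map (the descent of $\rho_A\circ\theta$) and $\bar\Phi$ is induced by $\Phi$. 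Now projectivity of $P$ enters: since $\bar\theta_B\colon Q/B\to C(B)$ is a surjection in $\mathbf{C^{*,\tau}}$, I lift $\beta$ to a $*$-$\tau$-homomorphism $\lambda\colon P\to Q/B$ with $\bar\theta_B\circ\lambda=\beta$, and take $\lambda\circ\kappa$ for the second coordinate. By construction the two coordinate maps agree in $C(B)$ (both equal $\beta\circ\kappa$ there), so the universal property of the pullback yields a $*$-$\tau$-homomorphism $\widehat\varphi\colon X\to Q\subseteq C(E)$. Finally $\widehat\varphi|_A=\varphi$: for $a\in A$ the $Q/B$-coordinate is $\lambda(\kappa(a))=0$ and the $M(B)$-coordinate is $\varphi(a)\in B$, which corresponds under the isomorphism to $\varphi(a)\in Q$.

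The only place requiring real care is the pullback identification of $Q$ and the bookkeeping that every map above is $\tau$-equivariant; the genuinely hard analytic content, surjectivity of $\theta_B$, is imported wholesale from Theorem~\ref{multRealization} (which is where $\sigma$-unitality and the self-$\tau$ strictly positive elements of Corollary~\ref{realStrictlyPos} are used). I expect this compatibility verification, rather than any new estimate, to be the main obstacle.

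For the ``moreover'' clause I would rerun the same construction while tracking the unit. Given $\psi\colon\widetilde A\to C(E)$, put $\varphi=\psi|_A$, $B=\varphi(A)$, and $p=\psi(1)$, a self-$\tau$ projection. One checks $p\in Q$ and $\theta_B(p)=1_{M(B)}$, so $e:=p+B$ is a self-$\tau$ projection in $Q/B$ with $\bar\theta_B(e)=1_{C(B)}$. The key move is to lift $\beta\colon P\to C(B)$ not through all of $Q/B$ but through the compression $\bar\theta_B\colon e(Q/B)e\to C(B)$, which is still onto; projectivity of $P$ then gives $\lambda\colon P\to e(Q/B)e$, so that $e$ acts as a unit on $\lambda(P)$ and $\lambda$ extends to $\widetilde\lambda\colon\widetilde P\to Q/B$ with $\widetilde\lambda(1)=e$. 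Since $A\triangleleft\widetilde X$ with $\widetilde X/A\cong\widetilde P$, assembling $\Phi\circ\theta$ and $\widetilde\lambda\circ\kappa$ through the pullback gives $\widehat\psi\colon\widetilde X\to Q\subseteq C(E)$ extending $\psi$, with $\widehat\psi(1)=p$. Thus projectivity of $P$ is exactly what lets us place the lift in the corner determined by $p$, and this corner-placement, rather than any lifting of $p$ to $M(E)$, is the subtlety of the unital case.
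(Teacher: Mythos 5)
Your proof is correct, and it reaches the conclusion by a genuinely different assembly than the paper's. The paper uses projectivity of $P$ twice: once to split the sequence by a $*$-$\tau$-homomorphism $\lambda \colon P \to X$, and once to lift the composite $P \to M(\varphi(A))$ (namely $\widehat{\varphi}\circ\theta\circ\lambda$) through the idealizer surjection $I(\varphi(A):C(E)) \twoheadrightarrow M(\varphi(A))$ supplied by Theorem~\ref{multRealization}; the extension is then defined additively, $\Psi(a+\lambda(p))=\varphi(a)+\psi(p)$, with multiplicativity coming from the universal property of split extensions cited from \cite{loringBook}. You never split the sequence: you apply projectivity exactly once, lifting the Busby-type map $\beta \colon P \to C(B)$ through the surjection $\bar{\theta}_B \colon Q/B \twoheadrightarrow C(B)$, and you assemble through the identification $Q \cong M(B)\oplus_{C(B)}(Q/B)$, using the paper's $C^{*,\tau}$ pull-back theorem for the equivariance bookkeeping. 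That identification is indeed valid, and as a small remark it requires only $\theta_B\vert_B = \iota$, not surjectivity of $\theta_B$; surjectivity of $\theta_B$ is needed precisely where you invoke it, to make $\bar{\theta}_B$ onto so that projectivity applies. The divergence is sharpest in the unital clause: the paper re-invokes Theorem~\ref{multRealization} with the nontrivial parameter $N = C^*(1-p)$, shrinking the idealizer to $I(\varphi(A):C(E))\cap(1-p)^{\perp}$, whereas you compress in the quotient, lifting into the corner $e(Q/B)e$ with $e = p+B$, where surjectivity onto $C(B)$ is elementary because $\bar{\theta}_B(e)=1$; consequently your argument only ever uses the $D=N=0$ case of multiplier realization, which is a genuine simplification. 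Two steps you leave implicit are harmless but worth recording: $\theta$ extends to $\widetilde{X}\to M(A)$ because $A$ remains a self-$\tau$ ideal in $\widetilde{X}$ (Theorem~\ref{ThetaIsTau} again), and $\widetilde{X}/A \cong \widetilde{P}$, with compatibility of your two coordinates on the unit reducing to the computation that both map $1$ to $1_{C(B)}$.
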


\begin{proof}
Except for the $*$-$\tau$-homomorphism claim, this is
Theorem~3.4 of \cite{ELPBusby} combined with the usual
universal property of a split extension, as in Theorem~7.3.6
of  \cite{loringBook}.  We summarize those proofs and
verify that various maps can be selected to be $*$-$\tau$-homomorphisms.

Since $P$ is projective, the exact sequence has a splitting by a $C^{*,\tau}$-homomorphism $\lambda \colon P\rightarrow X$.
We assume we are given a $*$-$\tau$-homomorphism $\varphi \colon A \rightarrow C(E)$
with $E$ being $\sigma$-unital.  As in the proof of
Theorem~3.4 of \cite{ELPBusby}, we have the commutative
diagram, ignoring for now $\psi_0,$
\[
\xymatrix{
0 \ar[r]	& A\ar@{=}[d]	\ar[r]& X \ar[d]	\ar[r]& P\ar[r] \ar@/_/[l] _{\lambda} \ar@/^/[lddd] ^{\psi_0}& 0 \\
	& A \ar[r]\ar[d]	& M(A)\ar[d] \\
	& \varphi(A)\ar[r] & M(\varphi(A)) \\
	& \varphi(A)\ar@{^{(}->}[r]\ar@{=}[u] & I(\varphi(A) \colon C(E)) \ar@{->>}[u] 
}
\]
where the map $A\rightarrow \varphi(A)$ is the co-restriction of $\varphi$
making it onto. The essential fact that the arrow up from the
idealizer to the multiplier algebra is both surjective and
a $*$-$\tau$-homomorphism is Theorem~\ref{multRealization}.
The map from $X$ to $M(A)$ in the top square is a $*$-$\tau$-homomorphism
by Theorem~\ref{ThetaIsTau}.  The map from $M(A)$ to $M(\varphi(A))$
in the middle square is a $*$-$\tau$-homomorphism by
Lemma~\ref{MultMapIsTao}.  We use the projectivity, in the
$*$-$\tau$-sense, of $P$ to get a $*$-$\tau$-homomorphism $\psi_0$
making the diagram commute.

Following $\psi_0$ by the inclusion into the corona algebra
give us a $*$-$\tau$-homomor-phism 
$\psi \colon P\rightarrow C(E)$
such that 
\[
	\psi(p)\varphi(a) = \varphi(\lambda(p)a)
\]
for all $p$ in $P$ and $a$ in $A$. This induces a $*$-homomorphism
\[
	\Psi \colon X\rightarrow C(E)
\]
extending $\varphi$ by 
\[
	\Psi(a+\lambda(p))=\varphi(a)+\psi(p)
\]
which is evidently a $*$-$\tau$-homomorphism.

To get the last claim, we must use more of the power
of Theorem~\ref{multRealization}.
We are given $\widetilde{A} \rightarrow C(E)$ which we
regard as a  $*$-$\tau$-homomorphism $\varphi \colon A \rightarrow C(E)$
together with a projection $p$ in $C(E)$ such that 
$p\varphi(a) = \varphi(a)$ for all $a$ in $A.$  We can
replace $I(\varphi(A) \colon C(E))$ in the big diagram by
\[
I(\varphi(A) \colon C(E)) \cap (1-p)^\perp.
\]
We still have
the needed surjectivity onto $M(\varphi(A))$ and end
up with $\Psi \colon B \rightarrow C(E)$  with the property
$p\Psi(b) = \Psi(b)$ for all $b$ in $B.$
\end{proof}

\begin{kor} \label{coronaGraph}
Suppose $X$ is a compact metrizable space and $Y\subseteq X$ is
a closed subset of $X$ homeomorphic to the closed interval
$[0;1]$. Let $X_{1}$ be the quotient of $X$ obtained by
collapsing $Y$ to a point. The inclusion $C(X_{1},\mathrm{id})\hookrightarrow C(X,\mathrm{id})$ of abelian $C^{*,\tau}$-algebras is corona extendible.
\end{kor}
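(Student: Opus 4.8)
The plan is to realize the inclusion $C(X_1,\mathrm{id}) \hookrightarrow C(X,\mathrm{id})$ as the unitization of an ideal inclusion whose quotient is the projective cone $C_0((0;1],\mathrm{id})$, and then to invoke Theorem~\ref{Proj2CE}. First I would fix an endpoint $x_0$ of the arc $Y \cong [0;1]$ and set
\[
  A = \{ f \in C(X) \mid f|_Y = 0 \} = C_0(X \setminus Y), \qquad \mathcal{X} = \{ f \in C(X) \mid f(x_0) = 0 \} = C_0(X \setminus \{x_0\}).
\]
Both are $C^*$-ideals of $C(X)$, and since the reflection is $\mathrm{id}$ every subset is self-$\tau$, so they are ideals of $(C(X),\mathrm{id})$; because $x_0 \in Y$ we have $A \subseteq \mathcal{X}$. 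As $X$ is compact metrizable, $C(X)$ is separable, hence $A$, $\mathcal{X}$ and the quotient below are all $\sigma$-unital.

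Next I would identify the quotient. Restriction to $Y$ maps $\mathcal{X}$ onto $\{ g \in C(Y) \mid g(x_0) = 0 \}$ (surjective by Tietze), with kernel exactly $A$, so
\[
  \mathcal{X} / A \cong C_0(Y \setminus \{x_0\}) \cong C_0((0;1]),
\]
the last isomorphism holding because $x_0$ is an endpoint of $Y \cong [0;1]$. The induced reflection on this subquotient is again $\mathrm{id}$, so as a $C^{*,\tau}$-algebra $\mathcal{X}/A \cong C_0((0;1],\mathrm{id})$, which is projective by Example~\ref{projectiveEx}. Thus
\[
  0 \to A \to \mathcal{X} \to C_0((0;1],\mathrm{id}) \to 0
\]
is a short exact sequence of $\sigma$-unital $C^{*,\tau}$-algebras with projective quotient, and Theorem~\ref{Proj2CE} shows that the unitized inclusion $\widetilde{A} \to \widetilde{\mathcal{X}}$ is corona extendible.

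It then remains to identify this unitized map with the inclusion in the statement. Since $X_1$ is precisely the one-point compactification of $X \setminus Y$ (a neighbourhood of the collapsed point pulls back to a neighbourhood of $Y$, whose complement in $X$ is compact), and likewise $X = (X \setminus \{x_0\})^+$ as $x_0$ is not isolated, Gelfand duality gives $\widetilde{A} \cong C(X_1)$ and $\widetilde{\mathcal{X}} \cong C(X)$; by Lemma~\ref{onlyOneWayToUnitize} the unique extensions of $\mathrm{id}$ across the adjoined unit are again $\mathrm{id}$, so these are isomorphisms of $C^{*,\tau}$-algebras. Computing inside $C(X)$ we have $\widetilde{A} = A + \mathbb{C}\mathbb{1} = \{ f \in C(X) \mid f|_Y \text{ is constant} \}$, which is exactly the image of the standard inclusion $C(X_1) \hookrightarrow C(X)$, while $\widetilde{\mathcal{X}} = \mathcal{X} + \mathbb{C}\mathbb{1} = C(X)$; hence $\widetilde{A} \to \widetilde{\mathcal{X}}$ is literally the given inclusion $f \mapsto f$, and we are done.

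The main obstacle is not in the $C^{*,\tau}$-theory, which is handled wholesale by Theorem~\ref{Proj2CE} together with the projectivity of the cone; the real content is choosing $x_0$ to be an \emph{endpoint}, so that the quotient becomes the projective half-open interval rather than something unital. The only remaining bookkeeping is the verification of the last paragraph: that the two unitizations reproduce $C(X_1)$ and $C(X)$ with the reflection $\mathrm{id}$ and that the unitized inclusion is the standard one. The sole delicate case is the degenerate one in which $Y$ is also open, so that $X \setminus Y$ is already compact and the collapsed point is isolated; there one checks directly that $X_1 = (X \setminus Y) \sqcup \{p\}$ still matches the relevant unitization, so the argument goes through without change.
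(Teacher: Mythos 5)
Your proposal is correct and takes essentially the same approach as the paper: the paper's proof uses precisely the short exact sequence $0 \to C_0(X_1 \setminus \{y_*\},\mathrm{id}) \to C_0(X \setminus \{y_0\},\mathrm{id}) \to C_0((0;1],\mathrm{id}) \to 0$ with $y_0$ an endpoint of $Y$ (the same sequence as your $0 \to A \to \mathcal{X} \to C_0((0;1],\mathrm{id}) \to 0$, since $C_0(X_1\setminus\{y_*\}) = C_0(X\setminus Y)$), and then cites Theorem~\ref{Proj2CE} (including its unitization clause) and Example~\ref{projectiveEx}. The identifications of the unitized inclusion $\widetilde{A}\to\widetilde{\mathcal{X}}$ with $C(X_1,\mathrm{id})\hookrightarrow C(X,\mathrm{id})$, which you verify in detail, are left implicit in the paper.
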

\begin{proof}
Let $y_0$ denote the point in $Y$
associated to $0$ in $[0;1].$  Let $y_*$ be the point in
$X_1$ that is the image of $Y$ in the quotient map.
We have an exact sequence
\[
\xymatrix{
0 \ar[r] 
		& C_0(X_1 \setminus \{y_*\}) \ar[r] 
			& C_0(X \setminus \{y_0\})  \ar[r] 
				& C_0(0;1] \ar[r]
					& 0 
}
\]
where all $C^*$-algebras are equipped with the trivial $\tau$ operation.
Thus we are done by Theorem \ref{Proj2CE} and Example \ref{projectiveEx}. 
\end{proof}

\subsection{Corona (semi-) projective}

Just as in the $C^*$-case, the work of showing semiprojectivity can be reduced using corona algebras. Most of the proof of the following two theorems can be copied from the proof of \cite[Theorem 14.1.7]{loringBook} if one only remembers to change category. The only change is that we have not studied the Calkin algebra in a $C^{*,\tau}$ setting. To avoid using that, use the corona algebra of $\bigoplus_{n=1}^\infty \widetilde{(A,\tau)}$.

\begin{thm} \label{thm:coronaProj}
Suppose $A$ is a separable $C^{*,\tau}$-algebra. The following are equivalent:
\begin{enumerate}
	\item $A$ is projective;
	\item we can solve the lifting problem for $A$ whenever $\rho$ is the
quotient map $M(E)\rightarrow C(E)$ for a separable $C^{*,\tau}$-algebra
$E$ and $\varphi$ is injective;
	\item we can solve the lifting problem for $A$ whenever $\rho$ is the
quotient map $M(E)\rightarrow C(E)$ for a separable $C^{*,\tau}$-algebra
$E;$
	\item we can solve the lifting problem for $A$ whenever $\rho$ is the
quotient map $B\rightarrow B/I$ for a separable $C^{*,\tau}$-algebra
$B$ and closed $\tau$-closed ideal $I.$
\end{enumerate}
\end{thm}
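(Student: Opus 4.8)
The plan is to prove the cycle of implications
$(1) \Rightarrow (3) \Rightarrow (2) \Rightarrow (4) \Rightarrow (1)$,
following the recipe noted just before the statement: reproduce the proof of \cite[Theorem~14.1.7]{loringBook} in the category $\mathbf{C^{*,\tau}}$, making sure that every map produced along the way can be taken to be a $*$-$\tau$-homomorphism, and substituting the corona of $E_0 = \bigoplus_{n=1}^\infty \widetilde{(A,\tau)}$ wherever the complex proof invokes the Calkin algebra. Two of the four implications are immediate. For $(1) \Rightarrow (3)$: the corona quotient $\rho \colon M(E) \to C(E)$ is exactly the quotient of the $C^{*,\tau}$-algebra $M(E)$ by the $\tau$-closed ideal $E$ — the reflection on $M(E)$ is Kasparov's, and $\iota \colon E \to M(E)$ is a $*$-$\tau$-homomorphism, so $E = \iota(E)$ is self-$\tau$ — hence projectivity supplies the lift. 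For $(3) \Rightarrow (2)$: condition $(2)$ is literally the restriction of $(3)$ to injective $\varphi$.

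For $(4) \Rightarrow (1)$ I would run the usual separable-reduction argument, adapted to keep track of $\tau$. Given an arbitrary $C^{*,\tau}$-algebra $B$, a $\tau$-closed ideal $I \triangleleft_\tau B$, and a $*$-$\tau$-homomorphism $\varphi \colon A \to B/I$, separability of $A$ forces $\varphi(A)$ to be separable. Choose a countable subset of $B$ mapping onto a dense subset of $\varphi(A)$, adjoin the $\tau$-images and adjoints of its members, and close under the algebra and norm operations; this yields a separable self-$\tau$ subalgebra $B_0 \subseteq B$ with $\pi(B_0) \supseteq \varphi(A)$. Then $I_0 = B_0 \cap I$ is a $\tau$-closed ideal of $B_0$, the map $\varphi$ co-restricts to $A \to B_0/I_0$, and $(4)$ produces a $*$-$\tau$-lift into $B_0$; composing with the inclusion $B_0 \hookrightarrow B$ lifts $\varphi$ to $B$, which is precisely projectivity.

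The substantive implication, and the main obstacle, is $(2) \Rightarrow (4)$. Here one must manufacture, from an abstract separable quotient $\pi \colon B \to B/I$ together with a map $\varphi \colon A \to B/I$, a corona lifting problem with an \emph{injective} test map whose solution descends to a lift into $B$. The concrete tools are already in place: by Theorem~\ref{ThetaIsTau} the canonical map $\theta \colon B \to M(I)$ is a $*$-$\tau$-homomorphism carrying $\pi$ into the corona quotient $M(I) \to C(I)$, and Theorem~\ref{multRealization} (with Lemma~\ref{MultMapIsTao}) provides the surjectivity of the idealizer onto the relevant multiplier algebra that lets a corona lift be pushed back into $B$. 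The point at which the complex proof would invoke the universality of the Calkin algebra — that every separable $C^*$-algebra embeds in it, so that all the separable quotient data can be realized inside one fixed corona and the test map arranged injective — must instead be carried by $C(E_0)$: this is the corona of a separable $C^{*,\tau}$-algebra, and by Theorem~\ref{multRealization} it is large enough to realize the multiplier algebras occurring in the factorization and to receive the data injectively, so that $(2)$ applies.

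I expect the real friction to be bookkeeping rather than any new idea: at each stage of the reduction — the embedding into $C(E_0)$, the passage to the idealizer, the arrangement of injectivity, and the descent back to $B$ — one must verify that the maps remain $*$-$\tau$-homomorphisms, for which Theorems~\ref{ThetaIsTau} and~\ref{multRealization} and Lemma~\ref{MultMapIsTao} are exactly the tools provided, and one must check that replacing the Calkin algebra by $C(E_0)$ loses nothing. Once these compatibilities are confirmed, the argument is a faithful transcription of the classical proof with the category changed.
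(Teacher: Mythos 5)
Your proposal is correct and takes essentially the paper's own route: the paper's ``proof'' of Theorem~\ref{thm:coronaProj} consists precisely of the instruction to transcribe \cite[Theorem 14.1.7]{loringBook} into the category $\mathbf{C^{*,\tau}}$, substituting the corona of $\bigoplus_{n=1}^{\infty}\widetilde{(A,\tau)}$ for the Calkin algebra as the injectivity-providing receptacle, which is exactly your plan, and your $\tau$-bookkeeping (Kasparov's reflection on $M(E)$ so that $C(E)=M(E)/E$ is a genuine $C^{*,\tau}$-quotient, Theorem~\ref{ThetaIsTau} and Lemma~\ref{MultMapIsTao}, the self-$\tau$ separable subalgebra in $(4)\Rightarrow(1)$, and the injective $*$-$\tau$-embedding of $A$ into $C(E_0)$ via constant sequences) is the right execution of it. One small caution: in your sketch of $(2)\Rightarrow(4)$ the map $\theta\colon B\to M(I)$ alone does not suffice when $I$ fails to be essential in $B$ (its kernel is $I^{\perp}$, so one would lift the wrong problem); the transcribed classical argument supplies the missing essentialization step, and this is left at the same level of detail that the paper itself delegates to the citation.
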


\begin{thm}
Suppose $A$ is a separable $C^{*,\tau}$-algebra. The following are equivalent:
\begin{enumerate}
	\item $A$ is semiprojective;
	\item we can solve the partial lifting problem for $A$ whenever
$B=M(E)$ for a separable $C^{*,\tau}$-algebra $E$ and
$\overline{\bigcup E_{k}}=E$ for some chain of $\tau$-invariant ideals 
of $E$ and $\varphi$ is injective;
	\item we can solve the partial lifting problem for $A$ whenever
$B=M(E)$ for a separable $C^{*,\tau}$-algebra $E$ and
$\overline{\bigcup E_{k}}=E$ for some chain of $\tau$-invariant ideals 
of $E;$
	\item we can solve the partial lifting problem for $A$ whenever
$B$ is separable.
\end{enumerate}
\end{thm}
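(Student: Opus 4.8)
The plan is to mirror the $C^*$ argument of \cite[Theorem~14.1.7]{loringBook}, whose projective counterpart appears above as Theorem~\ref{thm:coronaProj}, and to prove the four statements equivalent by the cycle $(1)\Rightarrow(4)\Rightarrow(3)\Rightarrow(2)\Rightarrow(1)$. Two of these links are purely formal. The implication $(1)\Rightarrow(4)$ is immediate, since semiprojectivity already grants a partial lift for \emph{every} $C^{*,\tau}$-algebra $B$ carrying an increasing chain of $\tau$-invariant ideals, and $(4)$ only demands this when $B$ is separable. The implication $(3)\Rightarrow(2)$ is likewise trivial, because $(2)$ carries the extra hypothesis that $\varphi$ be injective and so asks us to solve a subclass of the problems solved in $(3)$. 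All the real content sits in the links $(4)\Rightarrow(3)$ and $(2)\Rightarrow(1)$, together with the bookkeeping that every lift, splitting, and induced map we construct respects $\tau$.

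For $(4)\Rightarrow(3)$ I would run the usual separabilisation argument inside $\mathbf{C^{*,\tau}}$. Here $B=M(E)$ is typically non-separable, but $A$ and $E$ are separable, so $\varphi(A)\subseteq C(E)=M(E)/E$ is separable and only countably much lifting data is needed along the chain. The task is to build a \emph{separable}, $\tau$-invariant $C^{*,\tau}$-subalgebra $B_0\subseteq M(E)$ carrying a compatible chain $B_0\cap E_k$ with $\overline{\bigcup_k(B_0\cap E_k)}=B_0\cap E$ and hosting the relevant lifts; closing each countable generating set under $\tau$ (and $*$) keeps $B_0$ $\tau$-invariant. Applying $(4)$ to $B_0$ and composing the resulting partial lift with the inclusion $B_0\hookrightarrow M(E)$ solves the $M(E)$-problem, giving $(3)$.

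For $(2)\Rightarrow(1)$ I would take an arbitrary partial lifting problem, with chain $J_1\subseteq J_2\subseteq\cdots$ of $\tau$-invariant ideals in a separable $B$ and $\phi\colon A\to B/J$, and represent it inside a corona algebra. This is exactly where the $C^*$ proof invokes the Calkin algebra $\mathcal{Q}(H)=C(\mathcal{K})$ as a universal host; since we have not equipped $\mathcal{K}$, $\mathcal{B}(H)$, or $\mathcal{Q}(H)$ with a reflection, I would instead use $E_0=\bigoplus_{n=1}^{\infty}\widetilde{(A,\tau)}$, as suggested in the remark preceding the theorem. This $E_0$ is $\sigma$-unital, it carries the reflection $\bigoplus_n\tau$ extended to the infinite sum, and its corona $C(E_0)$ is assembled from countably many copies of $A$, which is all the room the representation needs. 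The multiplier-realisation theorem (Theorem~\ref{multRealization}), in its $\tau$-equivariant form, then produces the idealizer surjecting onto the multiplier algebra of the represented copy of $A$; the maps between multiplier algebras are $*$-$\tau$-homomorphisms by Theorem~\ref{ThetaIsTau} and Lemma~\ref{MultMapIsTao}, so feeding the situation into the injective instance of $(2)$ yields a lift to some $M(E_0)/(E_0)_m$, which unwinds to a partial lift of $\phi$ to $B/J_m$. That establishes $(1)$.

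The hard part will not be any single calculation but the Calkin-algebra substitution itself: I must check that $C\!\left(\bigoplus_{n}\widetilde{(A,\tau)}\right)$ genuinely plays the role of $\mathcal{Q}(H)$ in this category, namely that an arbitrary separable chain problem can be faithfully represented there as a $C^{*,\tau}$-problem satisfying the orthogonality and commutant conditions required by Theorem~\ref{multRealization}, and that the infinite direct-sum reflection behaves well under the passage to multipliers. Everything else is a matter of repeating the $C^*$ proof while replacing $*$-homomorphisms by $*$-$\tau$-homomorphisms and symmetrising lifts via $\Re_\tau$; the earlier results (Theorems~\ref{ThetaIsTau} and \ref{multRealization} and Lemma~\ref{MultMapIsTao}) have been arranged precisely so that this symmetrisation goes through at each step.
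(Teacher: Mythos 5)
The decisive problem is in your $(2)\Rightarrow(1)$ step, in the role you assign to $E_0=\bigoplus_{n}\widetilde{(A,\tau)}$. You propose to verify that $C(E_0)$ ``plays the role of $\mathcal{Q}(H)$,'' namely that an arbitrary separable chain problem can be faithfully represented inside it. That verification must fail: since each $\widetilde{(A,\tau)}$ is unital, $M(E_0)=\prod_n \widetilde{A}$ and $C(E_0)=\prod_n\widetilde{A}\big/\bigoplus_n\widetilde{A}$, which inherits every structural constraint of $A$. If $A$ is abelian --- e.g.\ $A=C(X,\id)$, which is exactly the case this paper needs, and which must be tested against non-abelian $B$ such as the products $\prod_j (M_{n_j},\tau_j)$ used later in Lemma~\ref{liftANormal} --- then $C(E_0)$ is abelian and cannot contain a copy of a non-abelian $(B/J,\tau)$. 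Unlike the Calkin algebra, $C(E_0)$ is not a universal receptacle for separable algebras; the one thing it does provide is a faithful $*$-$\tau$-embedding of $A$ \emph{itself}, via the diagonal map $A\to\prod_n\widetilde{A}/\bigoplus_n\widetilde{A}$ (with, say, the chain given by the first $k$ summands). That is the actual content of the paper's remark: in \cite[Theorem~14.1.7]{loringBook} the Calkin algebra is used where a faithful copy of $A$ must be adjoined to a corona problem --- for instance to upgrade a non-injective $\varphi$ to the injective case (2), in the spirit of Proposition~\ref{easierSP} --- and $C(E_0)$ replaces it only in that limited role. The conversion of a general separable chain problem $(B,(J_k),\phi)$ into an $M(E)$-type problem has to come from a different device, a telescope-style algebra built from $(B,\tau)$ itself, whose reflection is inherited from $\tau_B$ rather than from $A$; your outline contains no substitute for this step, and the orthogonality and commutant hypotheses of Theorem~\ref{multRealization} cannot be arranged inside $C(E_0)$ for arbitrary $B$ for the same reason.

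Separately, even granting the representation, your cycle never returns to $(1)$: in the step you label $(2)\Rightarrow(1)$ you take $B$ \emph{separable}, so what you would prove is $(2)\Rightarrow(4)$, and together with $(1)\Rightarrow(4)\Rightarrow(3)\Rightarrow(2)$ this only yields the equivalence of $(2)$, $(3)$ and $(4)$. Semiprojectivity as defined in this paper quantifies over all $C^{*,\tau}$-algebras $B$, so you still need $(4)\Rightarrow(1)$: given a non-separable $(B,\tau)$ with a chain of $\tau$-invariant ideals $J_k$ and $\phi\colon A\to B/J$, extract a separable $\tau$-invariant $B_0\subseteq B$ with $\phi(A)$ contained in the image of $B_0/(B_0\cap J)$, with $\overline{\bigcup_k (B_0\cap J_k)}=B_0\cap J$, arranged by the distance-preserving enlargement of countable generating sets closed under $*$ and $\tau$. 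This is exactly the argument you sketch for $(4)\Rightarrow(3)$, applied to a general $B$ in place of $M(E)$, so the repair is cheap --- but as written the loop is open, and the one genuinely non-formal ingredient you did include (separabilization) is deployed only where it does not close it.
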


Just as in the $C^*$ case, the corona algebra description of (semi-) projectivity let us prove that (semi-) projectivity passes to ideals in certain cases.  

\begin{thm}
Suppose $0\rightarrow I\rightarrow A\rightarrow B\rightarrow0$ is
an exact sequence of separable $C^{*,\tau}$-algebras. If $A$ and
$B$ are projective then $I$ is projective.
\end{thm}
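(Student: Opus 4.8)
The plan is to verify projectivity of $I$ through the corona characterization of Theorem~\ref{thm:coronaProj}, reducing the problem to a single lifting problem against a corona quotient, and then to split that lifting into two independent steps powered by the two hypotheses on $A$ and $B$.

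First I would invoke Theorem~\ref{thm:coronaProj}: to show that the separable algebra $I$ is projective it suffices to solve the lifting problem whenever $\rho\colon M(E)\to C(E)$ is the canonical quotient onto the corona of a separable $C^{*,\tau}$-algebra $E$. So fix such an $E$ together with a $*$-$\tau$-homomorphism $\varphi\colon I\to C(E)$; the goal is a $*$-$\tau$-homomorphism $\psi\colon I\to M(E)$ with $\rho\circ\psi=\varphi$. Since $E$ is separable it is in particular $\sigma$-unital, and $I$, $A$, $B$ are separable hence $\sigma$-unital, so the theorems cited below all apply. The key step uses the projectivity of $B$ only through Theorem~\ref{Proj2CE}. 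Applying that theorem to the given sequence $0\to I\to A\to B\to 0$ (with $B$ as the projective quotient), the inclusion $\iota\colon I\hookrightarrow A$ is corona extendible. By definition this produces a $*$-$\tau$-homomorphism $\widehat{\varphi}\colon A\to C(E)$ with $\widehat{\varphi}\circ\iota=\varphi$, i.e.\ an extension of $\varphi$ across the inclusion of $I$ into the larger algebra $A$.

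Now I would use the projectivity of $A$. Viewing $\widehat{\varphi}\colon A\to C(E)=M(E)/E$ as a lifting problem against $\rho$, the equivalence in Theorem~\ref{thm:coronaProj} applied to $A$ yields a $*$-$\tau$-homomorphism $\Psi\colon A\to M(E)$ with $\rho\circ\Psi=\widehat{\varphi}$. Setting $\psi=\Psi\circ\iota$ gives
\[
\rho\circ\psi=\rho\circ\Psi\circ\iota=\widehat{\varphi}\circ\iota=\varphi,
\]
so $\psi$ is the desired lift and $I$ is projective.

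The genuine content is entirely absorbed into Theorems~\ref{Proj2CE} and~\ref{thm:coronaProj}; once those are in hand the argument is a two-step chase. I expect the only points needing care to be bookkeeping: checking that the $\sigma$-unitality hypotheses of Theorem~\ref{Proj2CE} are met (guaranteed by separability), that the corona quotient furnished by Theorem~\ref{thm:coronaProj} is the same $\rho\colon M(E)\to C(E)$ used in both invocations, and that every map produced is a morphism in $\mathbf{C^{*,\tau}}$ rather than merely a $*$-homomorphism — but all three are handled by the $*$-$\tau$-versions of the cited results. If there is any obstacle, it would be confirming that the corona-extension $\widehat{\varphi}$ coming from $B$-projectivity and the corona-lift $\Psi$ coming from $A$-projectivity can be taken for one and the same separable $E$, which they can since $E$ is fixed at the outset.
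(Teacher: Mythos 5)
Your proposal is correct and follows essentially the same route as the paper's own (very terse) proof: reduce via Theorem~\ref{thm:coronaProj} to lifting morphisms $I \to C(E)$, extend across the inclusion $I \hookrightarrow A$ by corona extendibility (Theorem~\ref{Proj2CE}, using projectivity of the quotient $B$), then lift the extension using projectivity of $A$ and restrict back to $I$. The only difference is that you spell out the bookkeeping ($\sigma$-unitality from separability, the fixed $E$, the $*$-$\tau$ nature of all maps) that the paper leaves implicit; incidentally, the paper's phrase ``these extend to morphisms $B \to C(E)$'' is a slip for $A \to C(E)$, and your write-up gives the intended argument.
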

\begin{proof}
We need only lift morphisms of the form $I \rightarrow C(E).$ These extend to morphisms $A \rightarrow C(E),$ and those morphisms lift.
\end{proof}

\begin{thm}
Suppose $0\rightarrow I\rightarrow A\rightarrow B\rightarrow0$ is
an exact sequence of separable $C^{*,\tau}$-algebras. If $A$ is
semiprojective and $B$ is projective then $I$ is semiprojective.
\end{thm}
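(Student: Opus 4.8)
The plan is to run the same argument as in the projective case immediately above, using semiprojectivity of $A$ where that proof used projectivity, and reducing to corona algebras via the semiprojective analogue of Theorem~\ref{thm:coronaProj} proved just above.

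First I would invoke that corona characterization to cut the problem down. It tells us $I$ is semiprojective as soon as we can solve every partial lifting problem for $I$ of the special shape $B=M(E)$, where $E$ is a separable $C^{*,\tau}$-algebra equipped with an increasing chain of $\tau$-invariant ideals $E_1\subseteq E_2\subseteq\cdots$ with $\overline{\bigcup_m E_m}=E$. Concretely, I am handed a $*$-$\tau$-homomorphism $\phi\colon I\to M(E)/E=C(E)$ and must produce some $m$ and a $*$-$\tau$-homomorphism $\psi\colon I\to M(E)/E_m$ with $\pi_{m,\infty}\circ\psi=\phi$.

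Next I would push $\phi$ up the inclusion. Since $0\to I\to A\to B\to0$ is a short exact sequence of separable, hence $\sigma$-unital, $C^{*,\tau}$-algebras and $B$ is projective, Theorem~\ref{Proj2CE} shows the inclusion $I\hookrightarrow A$ is corona extendible. Applied to our $\phi\colon I\to C(E)$ this produces a $*$-$\tau$-homomorphism $\Phi\colon A\to C(E)=M(E)/E$ with $\Phi|_I=\phi$. Now I would lift $\Phi$ using semiprojectivity of $A$: since $\Phi$ maps into the quotient of $M(E)$ by $E=\overline{\bigcup_m E_m}$ along the very same tower $\{E_m\}$, semiprojectivity of $A$ hands me an $m$ and a $*$-$\tau$-homomorphism $\Psi\colon A\to M(E)/E_m$ with $\pi_{m,\infty}\circ\Psi=\Phi$. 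Restricting to the self-$\tau$ ideal $I$ gives $\psi:=\Psi|_I\colon I\to M(E)/E_m$, which is again a $*$-$\tau$-homomorphism and satisfies $\pi_{m,\infty}\circ\psi=\Phi|_I=\phi$. This is exactly the partial lift demanded, so $I$ is semiprojective.

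The argument is a diagram chase with no essential new difficulty beyond the projective case, so I do not anticipate a serious obstacle. The only points to watch are bookkeeping inside $\mathbf{C^{*,\tau}}$: that separability delivers the $\sigma$-unitality required by Theorem~\ref{Proj2CE}, that the extension $\Phi$, the partial lift $\Psi$, and the restriction $\psi$ all remain $\tau$-preserving (the first by corona extendibility, the second by semiprojectivity of $A$ in the $\tau$-category, the third because $I$ is self-$\tau$), and that the tower I feed to $A$'s semiprojectivity is literally the tower supplied by the corona reduction, so that the reused partial-lifting problem is legitimate for $A$ as well.
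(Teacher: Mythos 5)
Your proof is correct and is essentially the paper's own argument: the paper states this theorem without a written proof, but the intended reasoning is exactly the semiprojective adaptation of the projective case proved immediately above, which is what you carried out --- reduce to partial lifting problems into $M(E)/E_m \twoheadrightarrow C(E)$ via the corona characterization of semiprojectivity, extend $\phi\colon I \to C(E)$ to $A$ using the corona extendibility of $I \hookrightarrow A$ furnished by Theorem~\ref{Proj2CE} (projectivity of $B$, separability giving $\sigma$-unitality), partially lift along the same tower $\{E_m\}$ by semiprojectivity of $A$, and restrict to the self-$\tau$ ideal $I$. Your bookkeeping remarks (that $\Phi$, $\Psi$, and $\psi = \Psi|_I$ stay $\tau$-preserving) are the right points to check and all hold.
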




\section{Functions on graphs} \label{FunGraphSec}

In this section we show semiprojectivity of continuous functions on finite one-dimensional CW-complexes with the trivial reflection. We will sometimes refer to finite one-dimensional CW-complexes as graphs. The proof follows the ideas put forth in \cite{loring96graph}. In that paper semiprojectivity of ``dimension drop graphs'' is shown. Since we have a specific goal in mind, we have chosen to discard the matrix algebras.

\begin{thm}
If $X$ is a finite one-dimensional CW complex, then $C(X, \id)$ is a semiprojective $C^{*,\tau}$-algebra. 
\end{thm}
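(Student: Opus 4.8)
The plan is to induct on the number of edges of $X$ that are not loops, reducing at each stage by collapsing one such edge. For the base of the induction, suppose $X$ has no non-loop edges. Then every connected component of $X$ is either a single point or a one-vertex complex, i.e.\ a wedge of circles, so $C(X,\id)$ is a finite direct sum whose summands are copies of $(\C,\id)$ and algebras $C(\text{bouquet},\id)$. The former are semiprojective and the latter are semiprojective by Proposition~\ref{spFlower}, whence $C(X,\id)$ is semiprojective by Proposition~\ref{directSumsp}.

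For the inductive step, assume $X$ has at least one non-loop edge $e$, and set $Y=\overline{e}\cong[0;1]$ (its endpoints are distinct as points of $X$, so $Y$ really is an arc). Restriction to $Y$ gives the short exact sequence
\[
0 \longrightarrow C_0(X\setminus Y) \longrightarrow C(X,\id) \stackrel{r}{\longrightarrow} C([0;1],\id) \longrightarrow 0 .
\]
The quotient $C([0;1],\id)$ is projective, being the universal unital $C^{*,\tau}$-algebra on a single self-$\tau$ self-adjoint contraction (a liftable relation, with self-$\tau$ lifts obtained by the averaging trick $x\mapsto(x+x^\tau)/2$, exactly as in Example~\ref{projectiveEx}); hence the sequence splits. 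Writing $X_1=X/Y$, I note that $X_1\setminus\{y_*\}\cong X\setminus Y$ and $X_1$ is compact, so $\widetilde{C_0(X\setminus Y)}\cong C(X_1,\id)$. Since $X_1$ is a finite one-dimensional CW complex with strictly fewer non-loop edges (collapsing $e$ merges its endpoints and turns any parallel edges into loops), the induction hypothesis gives that $C(X_1,\id)$ is semiprojective, and therefore $C_0(X\setminus Y)$ is semiprojective by Lemma~\ref{spIffUnitizationSp}. Finally, because the quotient is projective, Theorem~\ref{Proj2CE} (equivalently Corollary~\ref{coronaGraph}) tells us that the inclusion $C_0(X\setminus Y)\hookrightarrow C(X,\id)$ is corona extendible.

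With these three facts in hand---semiprojective ideal, projective quotient, and a corona-extendible inclusion---I would run the real-case analogue of the lifting argument of \cite{loring96graph}. By the semiprojective analogue of Theorem~\ref{thm:coronaProj} it suffices to solve a partial lifting problem against a corona quotient $M(E)\twoheadrightarrow C(E)$ carrying a $\tau$-invariant ideal chain, so let $\phi\colon C(X,\id)\to C(E)$ be given. Projectivity of $C([0;1],\id)$ lets me lift $\phi\circ s$ (where $s$ splits $r$) all the way to $M(E)$; semiprojectivity of the ideal lets me lift $\phi$ restricted to $C_0(X\setminus Y)$ to a finite stage $M(E)/E_m$; and corona extendibility of the inclusion is precisely what allows these two lifts to be amalgamated into a single $*$-$\tau$-homomorphism $\psi\colon C(X,\id)\to M(E)/E_m$ lifting $\phi$. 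Throughout, every lift is kept self-$\tau$ by averaging and by appeal to Lemma~\ref{tauAndFunctions} for the functional calculus, which is routine here since the reflection is $\id$ and the only $\tau$-condition is that images land in the real part.

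The main obstacle is this final amalgamation step. The boundary relations tying the interval coordinate to the ideal at the collapsed point hold only approximately once one passes to a finite stage, so the interval lift must be perturbed---by functional calculus, and while preserving self-$\tau$-ness---to make the relations hold exactly at some stage $m$, enlarging $m$ as needed; this is exactly the place where corona extendibility is used in an essential way rather than as bookkeeping. Everything else transfers verbatim from the complex case, the only recurring new ingredient being the systematic use of the averaging trick to stay inside the real part of the target.
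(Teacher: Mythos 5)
Your base case and induction scheme are fine (inducting on non-loop edges rather than the paper's vertices makes no real difference, since collapsing an edge merges its two endpoints just as the paper's construction does), but there are two problems, one fixable and one fatal. The fixable one: $C([0;1],\id)$ is \emph{not} projective in $\mathbf{C^{*,\tau}}$. It is unital, and no unital $C^*$-algebra is projective in the nonunital category --- a lift of the identity map along the evaluation $C_0((0,1],A)\to A$ at $t=1$ would produce a nonzero projection-valued path in the cone vanishing at $0$, impossible by connectedness of $(0,1]$ and the $0$--$1$ dichotomy for norms of projections. Projectivity in $\mathbf{C^{*,\tau}_1}$ does give you the splitting $s$ (lift the generator, a positive contraction), but it does not let you feed your sequence $0\to C_0(X\setminus Y)\to C(X,\id)\to C([0;1],\id)\to 0$ into Theorem~\ref{Proj2CE}, whose hypothesis is nonunital projectivity of the quotient. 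This is exactly why the paper's Corollary~\ref{coronaGraph} deletes an endpoint $y_0$ of the arc, so that the quotient becomes the cone $C_0(0;1]$ --- genuinely projective by Example~\ref{projectiveEx} --- and then invokes the ``moreover'' (unitization) clause of Theorem~\ref{Proj2CE}. Your parenthetical ``(equivalently Corollary~\ref{coronaGraph})'' conceals that the corollary is the correct statement and your Proj2CE route, as written, is not.

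The fatal problem is the amalgamation step, which you acknowledge is the main obstacle but then misattribute to corona extendibility. Corona extendibility performs no perturbation and cannot amalgamate finite-stage lifts: it operates entirely at the corona level, \emph{before} any lifting. In the paper it is used once, to extend $\phi$ from $C(X,\id)$ to $\hat\phi$ on $C(\tilde X,\id)$, where $\tilde X$ is a blown-up copy of $X$ in which the two chosen vertices are thickened to arcs. What then forces the relations to hold exactly at a finite stage is a pair of ideas absent from your sketch: (i) the conjugation trick $\tilde H=\psi(l\circ h_0)+\psi\bigl((m\circ h_0)^{1/2}\bigr)\,H\,\psi\bigl((m\circ h_0)^{1/2}\bigr)$, which converts an arbitrary self-$\tau$ positive contractive lift $H$ of the interval generator $\hat\phi(h_1)$ into one that \emph{exactly} commutes with the lifted copy $\psi(C(Y,\id))$ of the smaller algebra (because $f\cdot(m\circ h_0)=0$ for $f\in C(Y)$), yielding a genuine morphism out of $D=C(Y\times[0;1])$ at a finite stage; and (ii) the fact that $X$ is an ANR, used to split the inductive system $C(Y_{n_0})\twoheadrightarrow C(Y_{n_0+1})\twoheadrightarrow\cdots$ of commutative quotients, whose limit is $C(X)$, at some finite stage $m$. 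Your ``perturb by functional calculus, enlarging $m$ as needed'' is precisely the step that the ANR retraction replaces, and no general theorem of the form ``semiprojective ideal plus (unitally) projective quotient plus splitting implies semiprojective extension'' exists to be cited here --- proving that implication in this special case \emph{is} the content of the paper's argument. So while you have correctly assembled the ingredients (semiprojective ideal via Lemma~\ref{spIffUnitizationSp} and the induction hypothesis, corona extendibility via Corollary~\ref{coronaGraph}), the proof of the inductive step is missing, not merely deferred.
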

\begin{proof}
Since semiprojectivity is closed under direct sums, we can assume that $X$ is connected. We will do the proof by induction on the number of vertices in $X$. 

The case where $X$ has only one vertex is Proposition~\ref{spFlower}.

Suppose now any one-dimensional CW complex with $k$ vertices gives rise to a semiprojective $C^{*,\tau}$-algebra. Let $X$ be a one-dimensional CW complex with $k+1$ vertices. Fix two vertices, $v_1$ and $v_2$ say. Let $\tilde{X}$ be a topological copy of $X$. Denote the copies of $v_1$ and $v_2$ in $\tilde{X}$ by $w_1$ and $w_2$ respectively. Choose a continuous function $h_0 \colon \tilde{X} \to [-1;2]$ such that $h_0^{-1}([-1;0])$ consists of the union of closed subintervals, containing $w_1$, of each of the edges adjacent to $w_1$, and such that $h_0^{-1}([1,2])$ consists of the same for the edges adjacent to $w_2$, and also $h_0^{-1}(\{ -1 \}) = \{ w_1 \}$ and $h_0^{-1}(\{ 2 \}) = \{ w_2 \}$. We will identify $X$ with the quotient of $\tilde{X}$ obtained by collapsing $h_0^{-1}([-1;0])$ to one point and $h_0^{-1}([1;2])$ to another. Let $\gamma_X \colon \tilde{X} \twoheadrightarrow X$ be the quotient map. Collapsing $v_1$ and $v_2$ to one point we obtain a space, $Y$ say. Let $\eta \colon X \to Y$ be the quotient map. Collapsing $w_1$ and $w_2$ in $\tilde{X}$ we get a space $\tilde{Y}$, call the quotient map $\tilde{\eta}$. And we can collapse arcs in $\tilde{Y}$ to obtain $Y$, with quotient map $\gamma_Y$ say. Thus we have a nice commuting square of quotient maps
\[
  \xymatrix{
    \tilde{X} \ar@{->>}[r]^-{\gamma_X} \ar@{->>}[d]_-{\tilde{\eta}}	& X \ar@{->>}[d]^-{\eta} \\
    \tilde{Y} \ar@{->>}[r]_-{\gamma_Y}					& Y
  }
\]
We will view $C(X,\id)$, $C(\tilde{Y}, \id)$ and $C(Y,\id)$ as subalgebras of $C(\tilde{X}, \id)$ using the following identifications:
\begin{align*}
	C(X,\id) & \cong \left\{ f \in C(\tilde{X}, \id) \mid \begin{array}{rl} f(x) = f(w_1) &\text{ if }  h_0(x) \leq 0 \\ f(x) = f(w_2) &\text{ if } h_0(x) \geq 1\\ \end{array} \right\}, \\
	C(\tilde{Y}, \id) & \cong \{ f \in C(\tilde{X}, \id) \mid f(w_1) = f(w_2) \}, \\
	C(Y,\id) & \cong \{ f \in C(\tilde{X}, \id) \mid f(x) = f(w_1) \text{ if } h_0(x) \leq 0 \text{ or } h_0(x) \geq 1  \}.
\end{align*} 
Define $h_1 \colon \tilde{X} \to [0,1]$ by 
\[
	h_1(x) = \left\{	\begin{array}{ll}
											0,			& h_0(x) \leq 0 \\
											h_0(x), & 0 \leq h_0(x) \leq 1 \\
											1, 			& 1 \leq h_0(x)
										\end{array} \right..
\]
Note that $h_1$ and $C(Y, \id)$ generate $C(X, \id)$. 

Suppose now that we are given a $C^{*,\tau}$-algebra $(E,\tau)$ containing an increasing sequence of $\tau$-invariant ideals $E_1 \subseteq E_2 \subseteq \cdots$ such that $\overline{\cup_n E_n} = E$, and an injective $C^{*,\tau}$-homomorphism $\phi \colon C(X,\id) \to (C(E),\tau)$. Putting some of the quotient maps and $\phi$ into one diagram, we have the following.
\[
	\xymatrix{
		C(\tilde{Y},\id) \ar[d]_-{(\tilde{\eta})_*} & & \\
		C(\tilde{X},\id) & \ar[l]_-{{\gamma_X}_*} C(X, \id) \ar[r]^-{\phi} & (C(E),\tau)
	} 
\]
where ${-}_*$ denotes the induced maps. Using Corollary~\ref{coronaGraph} repeatedly we get a $C^{*,\tau}$-homomorphism $\hat{\phi} \colon C(\tilde{X}, \id) \to (C(E),\id)$ such that $\phi = \hat{\phi} \circ {\gamma_X}_*$. Using that $\tilde{Y}$ is a one-dimensional CW complex with one vertex less than $X$ we get that $C(\tilde{Y},\id)$ is semiprojective, so we can find an $n \in \N$ and a $C^{*,\tau}$-homomorphism $\psi \colon C(\tilde{Y},\id) \to (M(E) / E_n,\tau)$ such that $\pi_{n,\infty} \circ \psi = \hat{\phi} \circ (\tilde{\eta})_*$. All in all we have the following commutative diagram
\[
	\xymatrix{
		C(\tilde{Y},\id) \ar[rr]^-{\psi} \ar[d]_-{(\tilde{\eta})_*} & & (M(E) / E_n,\tau) \ar@{->>}[d]^{\pi_{n,\infty}} \\
		C(\tilde{X},\id) \ar@/_1pc/[rr]_-{\hat{\phi}} & \ar[l]_-{(\gamma_X)_*} C(X, \id) \ar[r]^-{\phi} & (C(E),\tau)
	}
\]

We will now find a lift of $\hat{\phi}(h_1)$ in $(M(E) / E_n,\tau)$ that is positive contractive self-$\tau$ and commutes with $(\psi \circ (\gamma_Y)_*)(C(Y,\id))$. 
Since $\hat{\phi}(h_1)$ is positive and contractive, we can find a positive and contractive lift.
Averaging this lift with $\tau$ of it, we get a self-$\tau$ positive contractive lift of $\hat{\phi}(h_1)$.
Let us call it $H$.
Define functions $l,m,k \colon [-1;2] \to [0;1]$ by
\begin{align*}
  l(t)	& =	\left\{ \begin{array}{ll}
			  0,	& -1 \leq t \leq 0, \\
			  t,	& 0 \leq t \leq 1, \\
			  2-t,	& 1 \leq t \leq 2
      	   	        \end{array} \right., \\
  m(t)	& =	\left\{ \begin{array}{ll}
			  -t,	& -1 \leq t \leq 0, \\
			  0,	& 0 \leq t \leq 1, \\
			  t-1,	& 1 \leq t \leq 2
      	   	        \end{array} \right., \\
  k(t)	& =	\left\{ \begin{array}{ll}
			  0,	& -1 \leq t \leq 0, \\
			  t,	& 0 \leq t \leq 1, \\
			  1,	& 1 \leq t \leq 2
      	   	        \end{array} \right..
\end{align*}
Observe that $l + m k = k$, that $k \circ h_0 = h_1$, and that $l \circ h_0$ and $m \circ h_0$ both are in $C(\tilde{Y}, \id)$. Hence we can define 
\[
  \tilde{H} = \psi(l \circ h_0) + \psi((m \circ h_0)^{1/2}) H \psi((m \circ h_0)^{1/2}). 
\]
Since all the functions are real valued we get that $\tilde{H}$ is self-adjoint. Since every thing else is self-$\tau$ so is $\tilde{H}$. It is a lift of $\hat{\phi}(h_1)$ since 
\begin{align*}
  \pi_{n, \infty}(\tilde{H})	& =	\hat{\phi}(l \circ h_0) + \hat{\phi}((m \circ h_0)^{1/2}) \hat{\phi}(h_1) \hat{\phi}((m \circ h_0)^{1/2}) \\
    & =	\hat{\phi}( (l \circ h_0) + (m \circ h_0) h_1  ) = \hat{\phi}( (l \circ h_0) + (m \circ h_0) (k \circ h_0) ) \\ 
    & = \hat{\phi}( (l + mk) \circ h_0 ) = \hat{\phi}(k \circ h_0) = \hat{\phi}(h_1).
\end{align*}
By functional calculus one can replace $\tilde{H}$ with $\hat{H} = k(\tilde{H})$ to obtain a positive contractive lift of $\hat{\phi}(h_1)$. By Lemma~\ref{tauAndFunctions} $\hat{H}$ is self-$\tau$. To show that this lift commutes with $(\psi \circ (\gamma_Y)_*)(C(Y,\id))$ it suffices to show that $\tilde{H}$ does. Let $f \in C(Y,\id)$. Then $f (m \circ h_0) = 0$ so we must have 
\[
  \psi((\gamma_Y)_*(f)) \psi(m \circ h_0) = 0.
\]
Hence $\psi((\gamma_Y)_*(f))$ commutes with $\tilde{H}$. 

Let $D = C(Y \times [0,1])$. We have shown that given a $C^{*,\tau}$-homomorphism $\phi \colon C(X, \id) \to (C(E),\tau)$ we can find an $n_0 \in \N$ and a $C^{*}$-homomorphism $\chi \colon D \to M(E) /E_{n_0}$ such that the following diagram commutes
\[
\xymatrix{
  D \ar[r]^-{\chi} \ar@{->>}[d]_{\beta}	& M(E) / E_{n_0} \ar@{->>}[d]^-{\pi_{n_0, \infty}} \\
  C(X) \ar[r]_-{\phi}		& C(E) 
}
\]
Here $\beta$ denotes the map induced by sending $C(Y, \R)$ (inside $D$) to $C(Y, \R)$ (inside $C(X,\R)$) and $h$ to $h_1$. Since $\hat{H}$ is self-$\tau$ Lemma~\ref{tauAndFunctions} gives that $\chi$ is actually $\tau$-preserving. Hence we can view the above diagram as being a commutative diagram in the $\mathbf{C^{*,\tau}}$ category. 

For each $n \geq n_0$ define $\chi_n = \pi_{n_0, n} \circ \chi$ and let $D_n = D / \ker \chi_n$. Then if $n_0 \leq n \leq n'$ we have a surjection $D_n \twoheadrightarrow D_{n'}$. Since $D = C(Y \times [0;1])$ and each $D_n$, $n \geq n_0$, is a quotient of $D$, there must be spaces $Y_n$, $n \geq n_0$, such that $D_n \cong C(Y_n)$. Thus we have an inductive system
\[
  D \twoheadrightarrow C(Y_{n_0}) \twoheadrightarrow C(Y_{n_0 + 1}) \twoheadrightarrow C(Y_{n_0 + 2}) \twoheadrightarrow \cdots
\]
Call the bonding maps $\delta_{n,n'}$. This is an inductive system in the category of $C^*$-algebras, so we can compute the limit as 
\[
 D / \ker (\pi_{n_0,\infty} \circ \chi) \cong (\pi_{n_0,\infty} \circ \chi)(D) = (\phi \circ \beta)(D) = \phi(C(X)) \cong C(X).
\]
Since $X$ is an ANR we can find an $n_1 \geq n_0$ and a $C^*$-homomorphism $\lambda \colon C(X) \to C(Y_{n_1})$ such that $\delta_{n_1,\infty} \circ \lambda = \id$. Clearly $\lambda$ and $\delta_{n_1,\infty}$ are $C^{*,\tau}$-homomorphisms if we equip all the commutative algebras with the identity reflection.

Consider the the following commutative diagram.
\[
\xymatrix{
  (D,\id) \ar[r]^-{\chi} \ar@{->>}[d]_{\delta_{n_1}}			& (M(E) / E_{n_0},\tau) \ar@{->>}[d]^{\pi_{n_0, n_1}} \\
  (C(Y_{n_1}),\id) \ar@{->>}[d]_{\delta_{n_1,\infty}} \ar@{-->}[r]	& (M(E) / E_{n_1},\tau) \ar@{->>}[d]^{\pi_{n_1, \infty}} \\
  C(X,\id) \ar[r]_{\phi}					& (C(E),\tau)
}
\]
Since all the vertical maps are quotient maps, we can fit a $C^{*,\tau}$-homomor-phism 
on the dashed arrow in such a way that the diagram continues to commute. Call this homomorphism $\mu$. We claim that $\mu \circ \lambda$ is a lift of $\phi$. To see that, we compute
\[
  \pi_{n_1,\infty} \circ \mu \circ \lambda = \phi \circ \delta_{n_1,\infty} \circ \lambda = \phi. \qedhere
\]
\end{proof} 



\section{Variations on Lin's theorem}
\label{varOnLin}

From here on out we more less just follow the proof in \cite{friisRordam}, modifying their techniques to keep track of reflections. 

In this section we write $M_n$ for $M_n(\C)$.

\subsection{Approximating normal elements} \label{approxNormalSec}

The following two lemmas gives self-$\tau$ versions of known facts. One that $M_n$ has stable rank one, and the second that we can do polar decomposition of invertible elements.

\begin{lemma} \label{stblMatrix}
Let $\tau$ be a reflection on $M_n$. For any $\e > 0$ and any self-$\tau$ matrix $A \in (M_n, \tau)$ we can find a self-$\tau$ invertible matrix $B$ such that $\| A - B \| < \e$.
\end{lemma}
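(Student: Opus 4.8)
The plan is to perturb $A$ by a suitable scalar, exploiting that scalars are automatically self-$\tau$. First I would record two elementary facts about a reflection $\tau$ on $M_n$: it is $\C$-linear, and it fixes the unit (both noted in the remark following the definition of a reflection). Hence for every $\lambda \in \C$ the scalar matrix $\lambda I$ satisfies $(\lambda I)^\tau = \lambda\, I^\tau = \lambda I$, so it is self-$\tau$. Because $\tau$ is $\C$-linear with $\tau^2 = \id$, the self-$\tau$ matrices are exactly the $+1$ eigenspace of $\tau$, hence a $\C$-linear subspace; it follows that $A - \lambda I$ is self-$\tau$ for every $\lambda$ whenever $A$ is.

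Next I would use that $A$ is a matrix, so its spectrum $\sigma(A)$ is a finite subset of $\C$. Given $\e > 0$, the disc $\{\lambda \in \C : |\lambda| < \e\}$ is uncountable and therefore cannot be contained in the finite set $\sigma(A)$; I would pick any $\lambda$ in this disc with $\lambda \notin \sigma(A)$. Then $B := A - \lambda I$ is invertible precisely because $\lambda \notin \sigma(A)$, it is self-$\tau$ by the previous paragraph, and $\|A - B\| = |\lambda| < \e$. This produces the desired $B$.

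There is essentially no obstacle here: the only thing to guarantee is that the perturbation stays inside the self-$\tau$ subspace, and a scalar shift achieves this for free. For a more conceptual statement of the same fact --- which is what justifies calling the lemma a self-$\tau$ version of stable rank one --- I could alternatively observe that the self-$\tau$ matrices form a finite-dimensional complex subspace $S \subseteq M_n$ containing $I$, on which $\det$ restricts to a holomorphic polynomial that is not identically zero (it takes the value $1$ at $I$). Its vanishing locus is then a proper analytic subvariety with empty interior, so the invertible self-$\tau$ matrices are dense in $S$, and in particular $A$ is a limit of them. Either route proves the lemma, and I would present the scalar-shift version, since it is completely elementary and makes the role of the finiteness of $\sigma(A)$ transparent.
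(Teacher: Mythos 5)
Your proof is correct, and it takes a genuinely different (and slightly more elementary) route than the paper. The paper connects $A$ to the identity along the segment $B_t = (1-t)A + tI$ of self-$\tau$ matrices and applies the determinant: $p(t) = \det(B_t)$ is a polynomial with $p(1)=1$, hence has finitely many zeros, so some $B_{t_0}$ with $t_0$ small is invertible and $\|A - B_{t_0}\| \leq t_0\|A-I\|$ is small. You instead shift by a scalar, $B = A - \lambda I$ with $0 < |\lambda| < \e$ and $\lambda \notin \sigma(A)$, using the finiteness of the spectrum of a matrix rather than the finiteness of the zero set of a one-variable polynomial. Both arguments rest on the same two structural facts, which you verify correctly: a reflection is $\C$-linear and fixes the unit, so the self-$\tau$ matrices form a complex subspace containing $I$, and the perturbation never leaves it. Your version buys a cleaner norm estimate ($\|A-B\| = |\lambda|$ exactly, with no dependence on $\|A - I\|$) and makes the finiteness input maximally transparent; the paper's version has the mild advantage that the path-plus-determinant template is the standard one for density-of-invertibles arguments and foreshadows how such statements are proved when no spectral shortcut is available. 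Your closing observation --- that $\det$ restricted to the self-$\tau$ subspace is a nonzero polynomial whose vanishing locus has empty interior --- is essentially the conceptual core of the paper's proof, with the paper sampling that polynomial along the single line through $A$ and $I$. Either write-up would be acceptable; there is no gap.
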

\begin{proof}
If $A$ is invertible there is nothing to prove. So suppose $A$ is not invertible. Consider the path of self-$\tau$ matrices $B_t = (1-t)A + tI$. Define a function $p \colon [0,1] \to \C$ by
\[
	p(t) = \det(B_t).
\]
By definition of $\det$ and $B_t$ the function $p$ is a polynomial in $t$. Since $p(0) = \det(B_0) = \det(A) = 0$ and $p(1) = \det(B_1) = \det(I) = 1$, $p$ is not constant. Hence it has only finitely many zeros. Thus for any $\e > 0$ we can find a $t_0$ such that $0 < t_0 < \e/(\|A - I\|)$ and $p(t_0) \neq 0$. Then $B_{t_0}$ is self-$\tau$ and invertible, and 
\[
	\| A - B_{t_0} \| = \|At_0 - It_0\| \leq t_0 \|A - I\| < \e. \qedhere
\]
\end{proof}

\begin{lemma} \label{polarDecomp}
Let $a$ be a self-$\tau$ invertible element in a unital $C^{*,\tau}$-algebra $(A,\tau)$. Then $a$ can be written as $a = up$ where $u$ is a self-$\tau$-unitary and $p = (a^*a)^{1/2}$.  
\end{lemma}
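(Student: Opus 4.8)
The plan is to use the standard polar decomposition from the theory of unital $C^*$-algebras and then verify that the unitary it produces is automatically self-$\tau$. Since $a$ is invertible in the unital $C^*$-algebra $A$, the element $p = (a^*a)^{1/2}$ is a positive invertible element, and $u := a p^{-1}$ is a unitary with $a = up$. This is entirely classical; the only new content is the claim that $u^\tau = u$. So the real work is the computation showing that the $\tau$-symmetry of $a$ passes to $u$.

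First I would record the behaviour of $\tau$ on the relevant elements. Because $\tau$ is $*$-preserving and anti-multiplicative, $(a^*a)^\tau = (a^*)^\tau \,? $ must be computed carefully: $\tau(a^*a) = \tau(a)\tau(a^*) = a (a^*)^\tau$, and since $a$ is self-$\tau$ we have $a^\tau = a$, so $(a^*)^\tau = (a^\tau)^* = a^*$, giving $(a^*a)^\tau = a a^*$. The key observation is therefore that $\tau$ does not fix $a^*a$ but swaps it with $aa^*$. Using Lemma~\ref{tauAndFunctions} applied to the normal (indeed positive) element $a^*a$ and the continuous function $t \mapsto t^{-1/2}$ on its spectrum, I get $p^\tau = ((a^*a)^{1/2})^\tau = ((a^*a)^\tau)^{1/2} = (aa^*)^{1/2}$, and similarly $(p^{-1})^\tau = (aa^*)^{-1/2}$.

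Now I compute $u^\tau$ directly. Writing $u = a(a^*a)^{-1/2}$ and using anti-multiplicativity of $\tau$ together with $a^\tau = a$ and the identities just derived,
\[
  u^\tau = \bigl(a (a^*a)^{-1/2}\bigr)^\tau = \bigl((a^*a)^{-1/2}\bigr)^\tau a^\tau = (aa^*)^{-1/2} a.
\]
The final step is the standard functional-calculus identity $(aa^*)^{-1/2} a = a (a^*a)^{-1/2}$, which follows because $a$ intertwines the positive elements $a^*a$ and $aa^*$ (one has $f(aa^*)\,a = a\, f(a^*a)$ for any continuous $f$ on a suitable interval, by approximating $f$ with polynomials and using $a^*a \cdot a^* = a^*\cdot aa^*$). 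This yields $u^\tau = a(a^*a)^{-1/2} = u$, so $u$ is self-$\tau$.

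The main obstacle is simply keeping track of the fact that $\tau$ interchanges $a^*a$ and $aa^*$ rather than fixing either, so one cannot naively conclude $p^\tau = p$; the symmetry is genuinely a symmetry of the polar pair rather than of each factor individually. Once the intertwining identity $f(aa^*)a = a f(a^*a)$ is invoked, everything closes up, and unitarity of $u$ is inherited verbatim from the $C^*$-algebra polar decomposition. I would present the computation of $u^\tau$ as the heart of the proof and treat the existence and unitarity of $u$ as standard.
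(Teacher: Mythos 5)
Your proof is correct and is essentially the paper's own argument: both define $u = a(a^*a)^{-1/2}$, apply Lemma~\ref{tauAndFunctions} to get $((a^*a)^{-1/2})^\tau = (aa^*)^{-1/2}$, and close with the intertwining identity $(aa^*)^{-1/2}a = a(a^*a)^{-1/2}$, which the paper leaves as a ``standard functional calculus trick'' and you rightly justify via $f(aa^*)a = af(a^*a)$. Nothing to change.
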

\begin{proof}
Since $a$ is invertible so is $a^*a$. Therefore we can define $u = a(a^*a)^{-1/2}$ and $p = (a^*a)^{1/2}$. Then $u$ is a unitary and $up = a$. By using Lemma~\ref{tauAndFunctions} and standard functional calculus tricks, we get 
\[
	u^\tau = ((a^* a)^{-1/2})^\tau a^\tau = ((a^* a)^\tau)^{-1/2} a = (aa^*)^{-1/2} a = a (a^* a)^{-1/2} = u \qedhere
\] 
\end{proof}

Let $(n_j)$ be a sequence of natural numbers and let $\tau_j$ be a reflection on $M_{n_j}$. Define 
\[
  (M,\tau) = \prod_j (M_{n_j}, \tau_j), \quad (A,\tau) = \bigoplus_j (M_{n_j},\tau_j).
\]
Let $\pi \colon (M,\tau) \to (M/A,\tau)$ denote the quotient map. 

The remainder of this section is devoted to showing that normal self-$\tau$ elements in $(M/A,\tau)$ are close to normal self-$\tau$ elements with nice spectrum. Each lemma gradually improves the niceness of the spectrum, until proposition \ref{normalGridSpectrum} gives us what we want. 

\begin{lemma} \label{qPolarDecomp}
For any self-$\tau$ element $a \in (M/A, \tau)$ there exists a self-$\tau$ unitary $u \in (M/A,\tau)$ such that $a = up$, where $p = (a^*a)^{1/2}$. 
\end{lemma}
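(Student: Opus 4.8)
The statement to prove is Lemma~\ref{qPolarDecomp}: for any self-$\tau$ element $a$ in the corona-type quotient $(M/A,\tau)$, there is a self-$\tau$ unitary $u$ with $a = up$ where $p = (a^*a)^{1/2}$. Let me think about what we have at our disposal and what the obstacle is.

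The plan is to reduce the statement to the matrix-level polar decomposition of Lemma~\ref{polarDecomp}, working one coordinate at a time in the product $M = \prod_j (M_{n_j},\tau_j)$ and then pushing everything down through the quotient map $\pi$. The reason we cannot simply invoke Lemma~\ref{polarDecomp} for $a \in M/A$ directly is that $a$ need not be invertible in the quotient; the main obstacle is precisely to manufacture invertibility coordinatewise, where Lemma~\ref{stblMatrix} becomes available, and to arrange that the resulting perturbations get absorbed into the ideal $A = \bigoplus_j (M_{n_j},\tau_j)$ so that they disappear in $M/A$.

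First I would lift $a$ to a self-$\tau$ element of $M$. Starting from an arbitrary lift $x$ of $a$, the average $\hat{a} = (x + x^\tau)/2$ is again a lift, since $\pi(x^\tau) = \pi(x)^\tau = a^\tau = a$, and it satisfies $\hat{a}^\tau = \hat{a}$. Writing $\hat{a} = (a^{(j)})_j$, self-$\tau$-ness of $\hat{a}$ is exactly self-$\tau_j$-ness of each coordinate $a^{(j)}$, because $\tau = \prod_j \tau_j$ acts coordinatewise.

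Next, for each $j$ I would apply Lemma~\ref{stblMatrix} inside $(M_{n_j},\tau_j)$ to obtain a self-$\tau_j$ invertible matrix $b^{(j)}$ with $\|a^{(j)} - b^{(j)}\| < 1/j$. Setting $b = (b^{(j)})_j$, the estimate $\|b^{(j)}\| \leq \|\hat{a}\| + 1$ keeps $b$ in $M$, each coordinate is self-$\tau_j$ so $b$ is self-$\tau$, and $b - \hat{a} \in A$ because its coordinates vanish in norm; hence $\pi(b) = a$. Now Lemma~\ref{polarDecomp}, applied in the unital $C^{*,\tau}$-algebra $(M_{n_j},\tau_j)$, yields $b^{(j)} = u^{(j)} p^{(j)}$ with $u^{(j)}$ a self-$\tau_j$ unitary and $p^{(j)} = ((b^{(j)})^* b^{(j)})^{1/2}$. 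Assembling coordinates, $u = (u^{(j)})_j$ lies in $M$ since $\|u^{(j)}\| = 1$ for all $j$, it is a self-$\tau$ unitary, and $b = u\,(b^* b)^{1/2}$ because square roots in a product are computed coordinatewise.

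Finally I would apply $\pi$. Since $\pi$ is a unital $*$-$\tau$-homomorphism it carries the unitary $u$ to a self-$\tau$ unitary $\pi(u)$, and it commutes with continuous functional calculus, so $\pi((b^*b)^{1/2}) = (\pi(b)^* \pi(b))^{1/2} = (a^* a)^{1/2} = p$. Therefore $a = \pi(b) = \pi(u)\,p$, which is the desired decomposition with $\pi(u)$ a self-$\tau$ unitary. The only genuinely delicate points to verify are that the perturbed element $b$ stays bounded and congruent to $\hat{a}$ modulo $A$, and that $\pi$ respects both unitarity and square roots of positive elements; the $\tau$-equivariance is automatic throughout, as every tool used (Lemma~\ref{stblMatrix}, Lemma~\ref{polarDecomp}, and functional calculus via Lemma~\ref{tauAndFunctions}) preserves the reflection.
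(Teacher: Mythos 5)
Your proposal is correct and follows essentially the same route as the paper: lift $a$ to a self-$\tau$ element of $M$, perturb coordinatewise to invertibles via Lemma~\ref{stblMatrix} with the $1/j$ estimate so the perturbation lands in $A$, apply the matrix polar decomposition of Lemma~\ref{polarDecomp} in each coordinate, and push the resulting self-$\tau$ unitary and positive part through $\pi$, using that the quotient map respects functional calculus. The only difference is cosmetic: the paper simply takes a self-$\tau$ lift as given, while you construct one explicitly by averaging with $\tau$, which is the standard justification.
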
 
\begin{proof}
Let $x = (x_j)$ be any self-$\tau$ lift of $a$. Using Lemma~\ref{stblMatrix} we can for all $j \in \N$ find an invertible self-$\tau$ element $y_j \in (M_{n_j}, \tau_j)$ such that $\| x_j - y_j \| < 1/j$. Then the sequence $(y_j)$ is in $(M,\tau)$ and $\pi(y) = \pi(x) = a$. By Lemma~\ref{polarDecomp} we can, for each $j \in \N$, find a self-$\tau$ unitary $v_j \in (M_{n_j}, \tau_j)$ such that $y_j = v_j q_j$, where $q_j = (y_j^* y_j)^{1/2}$. If we let $v = (v_j)$ and $q = (q_j)$ then $y = vq$ and $v$ is a self-$\tau$ unitary. Now put $u = \pi(v)$ and $p = \pi(q)$. Then $a = \pi(y) = \pi(v) \pi(q) = u p$, $u$ is a self-$\tau$ unitary, and 
\[
  p = \pi(q) = \pi((y^* y)^{1/2}) = (\pi(y)^* \pi(y))^{1/2} = (a^* a)^{1/2}. \qedhere
\]
\end{proof}

\begin{lemma} \label{normalCloseToInv}
If $x \in (M/A,\tau)$ is normal and self-$\tau$ then for every $\e > 0$ there is a normal self-$\tau$ invertible element $y \in (M/A,\tau)$ such that $\| x - y \| < \e$. 
\end{lemma}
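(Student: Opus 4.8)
The plan is to exploit the polar decomposition available in $M/A$ (Lemma~\ref{qPolarDecomp}) and to perturb only the positive part, so that normality and the reflection symmetry are automatically preserved. First I would apply Lemma~\ref{qPolarDecomp} to the self-$\tau$ element $x$ to write $x = up$, where $u$ is a self-$\tau$ unitary and $p = (x^*x)^{1/2} \geq 0$. The crucial observation, which I would record next, is that normality of $x$ forces $u$ and $p$ to commute: from $x^*x = p u^* u p = p^2$ and $xx^* = u p^2 u^*$, normality gives $u p^2 u^* = p^2$, and applying the square root (continuous functional calculus) to the $*$-automorphism $a \mapsto u a u^*$ yields $u p u^* = p$, i.e. $up = pu$.

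With that in hand I would fix $0 < \delta < \e$ and set $y = u(p + \delta 1)$, where $1$ is the unit of the unital algebra $M/A$. Since $p + \delta 1 \geq \delta 1$ is positive and invertible and $u$ is unitary, $y$ is invertible. Because $u$ commutes with $p$, hence with $(p+\delta 1)^2$, a one-line computation gives $y^* y = (p+\delta 1)^2 = y y^*$, so $y$ is normal. Finally $y = up + \delta u = x + \delta u$; since $x$ and $u$ are both self-$\tau$ and $\tau$ is linear, $y^\tau = x + \delta u = y$, and $\|x - y\| = \delta \|u\| = \delta < \e$. This produces the required normal, self-$\tau$, invertible approximant.

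The step I expect to carry the weight is the very first one: having a genuine unitary, rather than merely a partial isometry, in the polar decomposition of an arbitrary self-$\tau$ element. This is exactly what Lemma~\ref{qPolarDecomp} supplies, using that each matrix block can first be perturbed to be invertible (Lemma~\ref{stblMatrix}); without a full unitary the summand $\delta u$ would fail to make $y$ invertible and would not control $\|x-y\|$. It is worth emphasising that the usual obstruction to such statements---that one cannot remove $0$ from the interior of the spectrum of a normal element by functional calculus alone---is sidestepped entirely here, because we do not apply a function to $x$ but instead add the independent unitary summand $\delta u$, which displaces the positive part $p$ uniformly away from $0$ while leaving both normality and the reflection intact.
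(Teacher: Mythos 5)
Your proof is correct and follows essentially the same route as the paper: polar decomposition via Lemma~\ref{qPolarDecomp}, commutation of $u$ and $p$ from normality, and the perturbation $y = u(p + \delta \mathbb{1})$ (the paper takes $\delta = \e/2$). Your verification that $y$ is self-$\tau$ via $y = x + \delta u$ and linearity of $\tau$ is in fact slightly slicker than the paper's, which routes through $p^\tau = p$ by Lemma~\ref{tauAndFunctions}.
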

\begin{proof}
By Lemma~\ref{qPolarDecomp} we can write $x = up$ where $u$ is a self-$\tau$ unitary and $p = (x^*x)^{1/2}$. Since we assumed $x$ to be normal $u$ and $p$ commute by standard functional calculus. Define $y = u(p + (\e/2) I)$, where $I$ is the unit in $M/A$. Since $y$ is the product of two commuting normal and invertible elements it is normal and invertible. By Lemma~\ref{tauAndFunctions} we have
\[
  p^\tau = \bigl( (x^*x)^\tau \bigr)^{1/2} = \bigl( xx^* \bigr)^{1/2} = \bigl( x^* x \bigr)^{1/2} = p.
\]
From that it follows that $y$ is self-$\tau$. Finally we see that 
\[
  \|x - y \| = \| up - (up + (\e/2)u) \| = \|(\e/2)u\| = \e/2 < \e. \qedhere
\]
\end{proof}

\begin{lemma} \label{normalCloseToAny}
Let $\lambda \in \C$ be given. If $x \in (M/A,\tau)$ is normal and self-$\tau$ then for every $\e > 0$ there is a normal self-$\tau$ element $y \in (M/A,\tau)$ with $\lambda \notin \sigma(y)$, and such that $\| x - y \| < \e$. 
\end{lemma}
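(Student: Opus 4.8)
The plan is to reduce everything to the case $\lambda = 0$, which is exactly Lemma~\ref{normalCloseToInv}, by translating the spectrum. The one structural fact I would use is that $\tau$ is $\C$-linear and fixes the unit $I$ of $M/A$, so that $\tau(\lambda I) = \lambda\, \tau(I) = \lambda I$ for every $\lambda \in \C$; in other words $\lambda I$ is self-$\tau$. Granting this, the element $x - \lambda I$ is again normal (being a scalar shift of a normal element) and self-$\tau$ (being a sum of two self-$\tau$ elements, since $x^\tau = x$ and $(\lambda I)^\tau = \lambda I$).

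Having made this observation, I would apply Lemma~\ref{normalCloseToInv} to $x - \lambda I$ in place of $x$. This produces a normal, self-$\tau$, \emph{invertible} element $z \in (M/A,\tau)$ with $\|(x - \lambda I) - z\| < \e$. I would then set $y = z + \lambda I$. Adding $\lambda I$ preserves normality and, by the same $\C$-linearity argument, preserves being self-$\tau$, so $y$ is normal and self-$\tau$. The norm estimate is immediate, since translation is an isometry: $\|x - y\| = \|(x-\lambda I) - z\| < \e$. Finally, translation shifts the spectrum, $\sigma(y) = \lambda + \sigma(z)$, so $\lambda \notin \sigma(y)$ precisely because $0 \notin \sigma(z)$, and $0 \notin \sigma(z)$ holds as $z$ is invertible. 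This gives all four required properties of $y$.

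The ``hard part'' here is genuinely mild: the entire argument hinges on the single point that translating by $\lambda I$ keeps us inside the self-$\tau$ elements, which is where one must use that $\tau$ is $\C$-linear (and not merely $\R$-linear), together with $\tau(I) = I$. Everything else is the spectral mapping identity $\sigma(z + \lambda I) = \lambda + \sigma(z)$ and the fact that translation preserves both the operator norm and normality, so no further estimates or functional-calculus manipulations are needed beyond invoking Lemma~\ref{normalCloseToInv}.
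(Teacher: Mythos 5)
Your proposal is correct and is essentially identical to the paper's proof: both translate to $\tilde{x} = x - \lambda I$, apply Lemma~\ref{normalCloseToInv}, and translate back, with the spectrum shift giving $\lambda \notin \sigma(y)$. Your explicit remark that $\lambda I$ is self-$\tau$ (since $\tau$ is $\C$-linear and fixes the unit) is a point the paper leaves implicit, but it matches the paper's definitions exactly.
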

\begin{proof}
Let $\tilde{x} = x - \lambda I$. Then $\tilde{x}$ is normal and self-$\tau$ so by Lemma~\ref{normalCloseToInv} we can find a normal, self-$\tau$ and invertible $\tilde{y} \in M/A$ such that $\|\tilde{y} - \tilde{x} \| < \e$. Let $y = \tilde{y} + \lambda I $. Then $y$ is normal and self-$\tau$, and 
\[
  \|y - x \| = \| \tilde{y} + \lambda I - x \| = \| \tilde{y} - (x - \lambda I)\| = \| \tilde{y} - \tilde{x} \| < \e.
\]
We note that since $0$ is not in the spectrum of $\tilde{y}$ we have $\lambda \notin \sigma(y)$.
\end{proof}

\begin{lemma} \label{normalManyHoles}
Let $F$ be an at most countable subset of $\C$. If $x \in (M/A,\tau)$ is normal and self-$\tau$ then for every $\e > 0$ there is a normal self-$\tau$ element $y \in (M/A,\tau)$ with $F \cap \sigma(y) = \emptyset$, and such that $\| x - y \| < \e$. 
\end{lemma}
\begin{proof}
Let $X$ be the set of normal and self-$\tau$ elements in $(M/A,\tau)$. This is a closed subset of $M/A$, so it is a complete metric space. Let $F = \{ \lambda_1, \lambda_2, \ldots \}$. For each $n \in \N$ let $U_n$ be the set of self-$\tau$ normal elements in $(M/A,\tau)$ that do not have $\lambda_n$ in their spectrum. By Lemma $\ref{normalCloseToAny}$ all the $U_n$ are dense in $X$. Since the set of invertible elements in a $C^*$-algebra is open all the $U_n$ are open in the relative topology of $X$. By Baire's theorem the set $\bigcap_n U_n$ is dense in $X$. That is, the set of normal self-$\tau$ elements whose spectrum does not contain $F$ is dense in the set of normal self-$\tau$ elements. 
\end{proof}

For any complex number $z$ we denote by $\Re(z)$ and $\Im(z)$ the real and imaginary parts of $z$. For all $\e > 0$ define 
\begin{align*}
  \Gamma_\e & = \{ z \in \C \mid \Re(z) \in \e\Z \text{ or } \Im(z) \in \e\Z \}, \\
  \Sigma_\e & = \{ z \in \C \mid \Re(z) \in \e(\Z + \frac{1}{2}) \text{ and } \Im(z) \in \e(\Z + \frac{1}{2})\}.
\end{align*}

\begin{prop} \label{normalGridSpectrum}
If $x \in (M/A,\tau)$ is normal and self-$\tau$ then for every $\e > 0$ there is a normal self-$\tau$ element $y \in (M/A,\tau)$ with $\sigma(y) \subseteq \Gamma_\e$, and such that $\| x - y \| < \e$. 
\end{prop}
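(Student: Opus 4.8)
The plan is to obtain $y$ by applying a continuous radial retraction $r$ of the plane onto the grid $\Gamma_\e$ to a slight perturbation of $x$, after first arranging that the spectrum misses the centers of the grid cells. One preliminary remark: a bounded element cannot have the unbounded set $\Gamma_\e$ as its spectrum, so the assertion must be read as $\sigma(y) \subseteq \Gamma_\e$, and it is this containment I would establish.

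First I would clear the cell-centers. The set $\Sigma_\e$ of centers is countable, so Lemma~\ref{normalGridSpectrum} applies with $F = \Sigma_\e$: fixing $\delta = \e(1 - 1/\sqrt2) > 0$, it produces a normal, self-$\tau$ element $x_0 \in (M/A,\tau)$ with $\sigma(x_0) \cap \Sigma_\e = \emptyset$ and $\|x - x_0\| < \delta$. Next I would build the retraction. The grid $\Gamma_\e$ cuts $\C$ into closed squares of side $\e$, each open square having its center in $\Sigma_\e$. Define $r \colon \C \setminus \Sigma_\e \to \Gamma_\e$ by fixing $\Gamma_\e$ pointwise and, on each punctured open square, sending $z$ to the point of that square's boundary lying on the ray from the center $c$ through $z$. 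I would verify that $r$ is continuous on $\C \setminus \Sigma_\e$: inside a punctured square this is clear, and along a shared edge or at a lattice point the radial maps of the adjacent squares both limit to the identity, hence agree; the only discontinuities sit at $\Sigma_\e$, which $\sigma(x_0)$ avoids. The key quantitative point is the displacement bound: for $z$ in a square with center $c$ the points $c,z,r(z)$ are collinear with $z$ between $c$ and $r(z)$, so
\[
  |r(z) - z| = |r(z) - c| - |z - c| \le |r(z) - c| \le \tfrac{\e}{\sqrt2},
\]
the distance from a center to the boundary being largest, equal to $\e/\sqrt2$, at the corners.

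Finally I would set $y = r(x_0)$ via continuous functional calculus. Since $r$ is continuous on $\sigma(x_0)$ and $x_0$ is normal, $y$ is a well-defined normal element with $\sigma(y) = r(\sigma(x_0)) \subseteq \Gamma_\e$ by spectral mapping. Because $x_0^\tau = x_0$, Lemma~\ref{tauAndFunctions} yields $y^\tau = r(x_0)^\tau = r(x_0^\tau) = r(x_0) = y$, so $y$ is automatically self-$\tau$ with no averaging needed. Writing $r - \id$ for the displacement function, the estimate above gives $\|x_0 - y\| = \|(r-\id)(x_0)\| \le \e/\sqrt2$, whence
\[
  \|x - y\| \le \|x - x_0\| + \|x_0 - y\| < \delta + \tfrac{\e}{\sqrt2} = \e.
\]

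The reflection-compatibility here is essentially free — it is exactly what Lemma~\ref{tauAndFunctions} buys us — so the genuine content is the classical complex-case construction. I expect the main work to be purely geometric: defining $r$ correctly, checking its continuity across the grid lines and lattice points, and proving the uniform displacement bound $\e/\sqrt2 < \e$; the role of Lemma~\ref{normalGridSpectrum} is simply to make $r$ defined on all of $\sigma(x_0)$ by removing the cell-centers where $r$ is discontinuous.
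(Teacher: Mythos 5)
Your proof is correct and follows the paper's argument exactly: use Lemma~\ref{normalGridSpectrum} with $F = \Sigma_\e$ to perturb $x$ within $\left(1 - \frac{\sqrt{2}}{2}\right)\e$ to a normal self-$\tau$ element whose spectrum misses the cell centers, then apply the continuous retraction $\C \setminus \Sigma_\e \to \Gamma_\e$ via functional calculus, with self-$\tau$-ness coming for free from Lemma~\ref{tauAndFunctions}. Two of your additions actually improve on the paper's write-up: your displacement bound $\e/\sqrt{2}$ is the correct constant for the retraction (the paper's stated bound $1 - \frac{\sqrt{2}}{2}$, besides missing a factor of $\e$, is the wrong number, though the arithmetic only works with $\frac{\sqrt{2}}{2}\e$), and your observation that $\sigma(y) = \Gamma_\e$ must be read as $\sigma(y) \subseteq \Gamma_\e$, since a spectrum is compact and $\Gamma_\e$ is unbounded, correctly identifies what the statement intends.
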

\begin{proof}
By Lemma \ref{normalManyHoles} we can find a normal and self-$\tau$ element $\tilde{y} \in (M/A, \tau)$ with
\[
  \sigma(\tilde{y}) \cap \Sigma_\e = \emptyset, \quad \text{ and } \quad \| \tilde{y} - x \| < \left( 1 - \frac{\sqrt{2}}{2} \right) \e. 
\]
There is a continuous retraction $f \colon \C \setminus \Sigma_\e \to \Gamma_\e$ with $| f(z) - z | < ( 1 - \frac{\sqrt{2}}{2}) \e$ for all $z$. Let $y = f(\tilde{y})$. Then $y$ is normal, has the right spectrum, and is no more than $\e$ away from $x$. By Lemma~\ref{tauAndFunctions} we have 
\[
  y^\tau = f(\tilde{y})^\tau = f(\tilde{y}^\tau) = f(\tilde{y}) = y. \qedhere
\]
\end{proof}

\subsection{The proof of Theorem 1}

We begin by using our knowledge of semiprojective $C^{*,\tau}$-algebras. 

\begin{prop} \label{liftANormal}
Suppose $(A_n,\tau_n)$ is a sequence of $C^{*,\tau}$-algebras. If $x$ is a normal self-$\tau$ element in 
\[
  (Q,\tau) = \prod_{n=1}^\infty (A_n,\tau_n)\left / \bigoplus_{n=1}^\infty (A_n,\tau_n)\right.
\]
with spectrum contained in some finite one dimensional CW complex, then there is a lift of $x$ to a normal self-$\tau$ element in $\prod_{n=1}^\infty (A_n, \tau_n)$. 
\end{prop}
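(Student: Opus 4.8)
The plan is to realize $x$ via continuous functional calculus as a $*$-$\tau$-homomorphism out of $C(X,\mathrm{id})$, invoke the semiprojectivity of $C(X,\mathrm{id})$ proved in the previous section, and then exploit the product structure to finish the lift for free.

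First I would set $M = \prod_{n=1}^\infty (A_n,\tau_n)$ and $A = \bigoplus_{n=1}^\infty (A_n,\tau_n)$, and write $A = \overline{\bigcup_k J_k}$, where $J_k$ is the ideal of sequences supported on the first $k$ coordinates, so that $J_k \cong (A_1,\tau_1) \oplus \dotsb \oplus (A_k,\tau_k)$. Each $J_k$ is a $\tau$-invariant ideal and $J_1 \subseteq J_2 \subseteq \dotsb$, so this is exactly the situation governed by semiprojectivity, with $Q = M/A = M/\overline{\bigcup_k J_k}$. Let $X \subseteq \C$ be the finite graph containing $\sigma(x)$. Since $x$ is normal and self-$\tau$, Lemma~\ref{tauAndFunctions} tells us that $f \mapsto f(x)$ is a $*$-$\tau$-homomorphism $\phi \colon C(X,\mathrm{id}) \to (Q,\tau)$; and if $g \in C(X)$ denotes the coordinate function $g(\lambda)=\lambda$, then $\phi(g) = x$.

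Next, because $X$ is a finite one-dimensional CW complex, $C(X,\mathrm{id})$ is semiprojective by the main theorem of the preceding section. Applying semiprojectivity to $\phi$ yields an index $m$ and a $*$-$\tau$-homomorphism $\psi \colon C(X,\mathrm{id}) \to (M/J_m,\tau)$ with $\pi_{m,\infty}\circ\psi = \phi$. Put $y = \psi(g)$. Then $y$ is normal, being the image of a normal element under a $*$-homomorphism; it is self-$\tau$ because $y^\tau = \psi(g^\tau) = \psi(g) = y$, using that the reflection on $C(X,\mathrm{id})$ is trivial so $g^\tau = g$; and $\pi_{m,\infty}(y) = \phi(g) = x$. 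Thus $y$ is a normal self-$\tau$ element of $M/J_m$ that maps onto $x$.

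Finally I would complete the lift by a coordinate argument: the quotient map $\pi_m \colon M \to M/J_m$ is identified with the projection $\prod_{n\ge 1}(A_n,\tau_n) \to \prod_{n>m}(A_n,\tau_n)$, which intertwines the coordinatewise reflections. Writing $y = (y_n)_{n>m}$ with each $y_n$ normal and self-$\tau_n$, the element $\tilde{x} = (0,\dotsc,0,y_{m+1},y_{m+2},\dotsc) \in M$ is normal and self-$\tau$ coordinatewise and satisfies $\pi_m(\tilde{x}) = y$, hence $\pi_\infty(\tilde{x}) = \pi_{m,\infty}(y) = x$. So $\tilde{x}$ is the required normal self-$\tau$ lift. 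The genuine content here is the semiprojectivity input, which is already available; the step that superficially looks like the main obstacle --- lifting from $M/J_m$ all the way back to $M$ --- is in fact trivial precisely because $J_m$ is a finite direct sum and $M/J_m$ is again a full product of the tail, so the finitely many missing coordinates can be filled in arbitrarily (for instance by zero) without disturbing normality or the reflection.
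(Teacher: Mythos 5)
Your proposal is correct and follows essentially the same route as the paper's own proof: both realize $x$ through the functional-calculus $*$-$\tau$-homomorphism $f \mapsto f(x)$ from $C(\Gamma,\mathrm{id})$ (Lemma~\ref{tauAndFunctions}), apply the semiprojectivity of $C(\Gamma,\mathrm{id})$ with respect to the chain of finitely-supported ideals to lift to $\prod_n (A_n,\tau_n)\bigl/\bigoplus_{n=1}^m (A_n,\tau_n)$, and then identify that quotient with the tail product $\prod_{n>m}(A_n,\tau_n)$ and pad with zeros. Your closing observation --- that the only real content is the semiprojectivity input, the final lifting step being trivial because the quotient by a finite direct sum is again a full product --- is exactly the structure of the paper's argument.
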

\begin{proof}
Let $\Gamma$ be a finite graph such that $\sigma(x) \subseteq \Gamma$.  By Lemma~\ref{tauAndFunctions} the map $f \mapsto f(x)$ is a $C^{*,\tau}$-homomorphism from $C(\Gamma,\id)$ to $(Q,\tau)$. Since $C(\Gamma, \id)$ is semiprojective, we can find an $m \in \N$ and a normal self-$\tau$ element 
\[
	y \in \prod_{n=1}^\infty (A_n,\tau_n) \left / \bigoplus_{n=1}^m (A_n, \tau)\right. ,
\]
such that $y$ is a lift of $x$. Identifying 
\[
	\prod_{n=1}^\infty (A_n,\tau_n) \left / \bigoplus_{n=1}^m (A_n, \tau) \right.
\]
with 
\[
	\prod_{n=m}^\infty (A_n, \tau),
\]
we see that if we pad $y$ with leading zeros we get a self-$\tau$ and normal lift of $x$ in $\prod_{n=1}^\infty (A_n, \tau_n)$.  
\end{proof}

We are now ready to prove real versions of Lin's theorem.
First we do the case of normal matrices. 

\begin{thm} \label{normalLin}
For every $\e > 0$ there is a $\delta > 0$ such that for any $n \in \N$, any reflection $\tau$ on $M_n$ and self-$\tau$ matrix $X \in (M_n, \tau)$ with $\|X\| \leq 1$ and
\[
  \|X^* X - X X^*\| < \delta,
\]
there exists a normal self-$\tau$ matrix $X' \in (M_n, \tau)$ with 
\[
  \| X - X' \|< \e. 
\] 
\end{thm}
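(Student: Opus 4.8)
The plan is to argue by contradiction using the sequence-algebra (reindexing) trick, exactly the setup introduced before Lemma~\ref{qPolarDecomp}. Suppose the statement fails for some $\e > 0$. Then for each $j \in \N$ there are a dimension $n_j$, a reflection $\tau_j$ on $M_{n_j}$, and a self-$\tau_j$ contraction $X_j \in (M_{n_j}, \tau_j)$ with $\|X_j^* X_j - X_j X_j^*\| < 1/j$ but with $\|X_j - X'\| \ge \e$ for every normal self-$\tau_j$ matrix $X'$. Form $(M,\tau) = \prod_j (M_{n_j}, \tau_j)$ and $(A,\tau) = \bigoplus_j (M_{n_j}, \tau_j)$ with quotient map $\pi \colon (M,\tau) \to (M/A,\tau)$. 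The sequence $(X_j)$ is a self-$\tau$ contraction in $M$, so $x := \pi((X_j))$ is a self-$\tau$ contraction in $M/A$; and because $\|X_j^* X_j - X_j X_j^*\| \to 0$, the coset $x$ is normal, its spectrum lying in the closed unit disk.

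Next I would push the spectrum of $x$ onto a finite graph. Applying the final proposition of Section~\ref{approxNormalSec} with parameter $\e/2$, I obtain a normal self-$\tau$ element $y \in (M/A,\tau)$ with $\|x - y\| < \e/2$ and spectrum contained in the grid $\Gamma_{\e/2}$. Since $\|y\| \le \|x\| + \e/2 \le 1 + \e/2$ is finite, $\sigma(y)$ actually lies in a bounded window $\Gamma := \Gamma_{\e/2} \cap [-R,R]^2$ of the grid for any $R \ge \|y\|$, and this window is a genuine finite one-dimensional CW-complex. Thus $y$ is a normal self-$\tau$ element of $M/A$ whose spectrum sits in a finite graph.

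Now semiprojectivity does the lifting. By Proposition~\ref{liftANormal}, $y$ lifts to a normal self-$\tau$ element $Y = (Y_j)$ in $\prod_j (M_{n_j},\tau_j)$; here the normality and the self-$\tau$ property of $Y$ are coordinatewise conditions, so each $Y_j$ is a normal, self-$\tau_j$ matrix. Since the quotient norm is the $\limsup$ of the coordinate norms, $\limsup_j \|X_j - Y_j\| = \|x - y\| < \e/2$, whence $\|X_j - Y_j\| < \e$ for all large $j$. For such $j$ the matrix $Y_j$ is a normal self-$\tau_j$ approximant of $X_j$ within $\e$, contradicting the choice of $X_j$.

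The substantive content is entirely front-loaded into the two ingredients I would invoke: the grid-spectrum proposition (itself built on Lemma~\ref{tauAndFunctions} together with a Baire-category argument) and, above all, Proposition~\ref{liftANormal}, whose real force is the semiprojectivity of $C(\Gamma,\id)$ for finite graphs $\Gamma$. The main obstacle I would watch is bookkeeping rather than hard analysis: confirming that normality and the reflection symmetry genuinely transfer back and forth between the product algebra, the quotient, and the individual coordinates, and that the bounded window of the grid really is a finite CW-complex so that Proposition~\ref{liftANormal} applies verbatim. Once those checks are in place the contradiction is immediate, and the passage from a quotient-norm estimate to the coordinatewise estimate via the $\limsup$ is what delivers the desired \emph{uniform} $\delta$.
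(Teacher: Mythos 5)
Your proposal is correct and follows essentially the same route as the paper: a contradiction argument in the sequence algebra $(M,\tau)=\prod_j(M_{n_j},\tau_j)$ modulo $(A,\tau)=\bigoplus_j(M_{n_j},\tau_j)$, approximation of the image coset by a normal self-$\tau$ element with spectrum in the grid (hence in a finite graph), a lift via Proposition~\ref{liftANormal}, and a coordinatewise estimate for large $j$. The only cosmetic differences are that you invoke the $\limsup$ description of the quotient norm where the paper instead picks an explicit $a\in A$ with small tail coordinates, and that you spell out the bounded-window argument making $\sigma(y)\cap[-R,R]^2$ a finite CW-complex, a point the paper leaves implicit.
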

\begin{proof}
Suppose there was an $\e$ that had no accompanying $\delta$. Then there must exist a sequence $(n_j)$ of natural numbers, reflections $\tau_j$ on $M_{n_j}$, and self-$\tau$ contractive matrices $X_j \in (M_{n_j}, \tau_j)$ such that 
\[
  \| X_j^* X_j - X_j X_j^* \| \to 0,
\]
but every $X_j$ is at least $\e$ away from all normal self-$\tau$ matrices in $(M_{n_j}, \tau_j)$. 

Let, as in Section~\ref{approxNormalSec}, 
\[
  (M,\tau) = \prod_j (M_{n_j}, \tau_j), \quad (A,\tau) = \bigoplus_j (M_{n_j},\tau_j).
\] 
Let $x = (X_j)$ and let $y = \pi(x)$, where $\pi$ is the quotient map from $(M,\tau)$ to $(M/A, \tau)$. Then $y$ is a normal and self-$\tau$ element. By Proposition \ref{normalGridSpectrum} we can find a normal self-$\tau$ element $z \in (M/A, \tau)$ with spectrum contained in a finite graph and $\| y - z \| < {\e/4}$.
Using Proposition~\ref{liftANormal} we can find a normal self-$\tau$ element $x' \in (M,\tau)$ such that $\pi(x') = z$. The definition of the norm in $(M/A, \tau)$ tells us that there exists $(A_j) = a \in A$ such that
\[
  \| (x - x') - a \| = \| y - z \| + \e/4 < \e / 2. 
\]
Now pick a $j_0$ such that $\| A_{j_0} \| < \e / 2$. Then we have 
\[
  \| X_{j_0} - X_{j_0}' \| \leq \| (X_{j_0} - X_{j_0}') - A_{j_0} \| + \|A_{j_0}\| < \|(x - x') - a\| + \e/2 < \e.
\]
Which contradicts our assumption about all the $X_j$ being at least $\e$ away from any normal self-$\tau$ element. 
\end{proof}

\begin{thm} \label{selfTauLin}
For every $\e > 0$ there is a $\delta > 0$ such that for any $n \in \N$, any reflection $\tau$ on $M_n$ and any pair $A,B \in (M_n, \tau)$ of self-adjoint, self-$\tau$ matrices such that $\|A\|, \|B\| \leq 1$ and 
\[
  \|AB - BA\| < \delta,
\]
there exists a commuting pair $A',B' \in (M_n, \tau)$ of self-adjoint and self-$\tau$ matrices with 
\[
  \| A - A' \| + \| B - B' \| < \e. 
\]
\end{thm}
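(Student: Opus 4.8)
The plan is to reduce this two-matrix statement to the single normal-matrix statement of Theorem~\ref{normalLin} by the classical device of packaging a self-adjoint pair into one normal element. Given self-adjoint, self-$\tau$ contractions $A,B$, I would set $X = (A+iB)/2$. Since $\|A\|,\|B\| \leq 1$ we have $\|X\| \leq 1$, and because $\tau$ is $\C$-linear with $A^\tau = A$ and $B^\tau = B$, the element $X$ is self-$\tau$. A direct computation gives $X^*X - XX^* = \tfrac{i}{2}(AB-BA)$, so that $\|X^*X - XX^*\| = \tfrac{1}{2}\|AB-BA\|$; thus controlling the commutator of $A$ and $B$ controls the self-commutator of $X$.

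Next I would invoke Theorem~\ref{normalLin}: for the tolerance $\e/4$ it supplies some $\delta' > 0$, and I would set $\delta = 2\delta'$. If $\|AB-BA\| < \delta$, then $\|X^*X - XX^*\| < \delta'$, so there is a normal, self-$\tau$ matrix $X' \in (M_n,\tau)$ with $\|X - X'\| < \e/4$. I would then recover the approximating pair as the (doubled) real and imaginary parts, $A' = X' + X'^*$ and $B' = -i(X' - X'^*)$. These are self-adjoint by construction, and they commute precisely because $X'$ is normal (one checks $A'B' - B'A' = -2i(X'^*X' - X'X'^*) = 0$). They are also self-$\tau$: using that $\tau$ preserves adjoints and is $\C$-linear, together with $X'^\tau = X'$, one gets $(A')^\tau = X'^\tau + (X'^\tau)^* = A'$ and likewise $(B')^\tau = B'$.

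Finally, since $A - A' = (X-X') + (X-X')^*$ and $B - B' = -i\bigl((X-X') - (X-X')^*\bigr)$, each difference is bounded by $2\|X-X'\|$, so that $\|A-A'\| + \|B-B'\| \leq 4\|X-X'\| < \e$, as required. The uniformity of $\delta$ over all $n$ and all reflections $\tau$ is inherited directly from the corresponding uniformity already built into Theorem~\ref{normalLin}.

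I do not expect a genuine obstacle here: the argument is the standard Friis--R{\o}rdam reduction, and the only content beyond the complex case is checking that every passage between $X$ and the pair $(A,B)$ respects the reflection. That verification is immediate from the $\C$-linearity of $\tau$ and the fact that $\tau$ commutes with $*$, so the bulk of the work is bookkeeping of constants and confirming that normality of $X'$ yields commutativity of $A'$ and $B'$.
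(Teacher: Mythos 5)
Your proof is essentially the paper's own: the same packaging $X = (A+iB)/2$, the same appeal to Theorem~\ref{normalLin}, and the same recovery $A' = X' + X'^*$, $B' = -i(X' - X'^*)$, with reflection-invariance checked at each step exactly as the paper does. If anything your constant bookkeeping is tighter --- the paper takes $\delta$ for tolerance $\e/2$, which bounds each of $\|A - A'\|$ and $\|B - B'\|$ by $\e$ but their sum only by $2\e$, while your choice of $\e/4$ (together with the correct factor $\tfrac{1}{2}$ in $\|X^*X - XX^*\| = \tfrac{1}{2}\|AB - BA\|$, which the paper states without that factor) literally delivers the stated bound $\|A - A'\| + \|B - B'\| < \e$.
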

\begin{proof}
Let $\e > 0$ be given. Use Theorem~\ref{normalLin} to find a $\delta$ matching $\e/2$. Let $A, B \in (M_n,\tau)$ be given as in the theorem. Define $X = (A + i B)/2$. Then 
\[
	\| XX^* - X^*X \| = \frac{1}{2} \| AB - BA \| < \delta. 
\]
Hence we can find a normal self-$\tau$ matrix $X' \in M_n$ such that $\|X - X'\| < \e/2$. Now let 
\[
	A' = X' + {X'}^* \quad \text{ and } \quad B' = -i(X' - {X'}^*).
\] 
Then $A'$ and $B'$ are self-adjoint and self-$\tau$. Since $X'$ is normal they commute. As 
\[
	\| A - A'\| = \| (X + X^*) - (X' + {X'}^*) \| \leq  \|X - X'\| + \| X^* - {X'}^*\| < \e,
\]
and likewise for $\|B - B'\|$, $A'$ and $B'$ show that we can approximate $A$ and $B$ by commuting self-adjoint, self-$\tau$ matrices.
\end{proof}

Setting $\tau$ equal the transpose in Theorem~\ref{selfTauLin} we
obtain the extension of Lin's theorem to real matrices.  Using
the dual operation, $\tau=\sharp,$ we obtain the extension of
Lin's theorem to self-dual matrices.



\section{TR rank one} \label{TRrankOne}

The proof of Theorem \ref{mainThm} generalizes to real $C^{*}$-algebras beyond the matrices.
In particular, we get versions of Theorem \ref{mainThm} that apply to paths of almost commuting matrices.

We will need a replacement for Lemma \ref{stblMatrix} regarding the approximation of self-dual matrices by invertible self-dual matrices.
We must introduce a concept along the lines of stable rank and real rank. 

\subsection{TR rank one}

Recall that real rank zero \cite{BrownPedersenRRzero} and stable rank one \cite{Rieffel_tsr} are defined in terms of density of the invertibles within a class of elements. 
For stable rank one the class is all elements,  while for real rank zero the class is restricted by requiring $a^{*}=a$.
When $(A,\tau)$ is a $C^{*,\tau}$-algebra we can look at many more symmetry restrictions, such as $a^{*}=a^{\tau}=a$ or $a^{*}=-a^{\tau}$.
Rank based on the elements with $a^{*}= a^{\tau}$ was considered in \cite{StaceyRealForUHF}.

Looking at Lemma \ref{stblMatrix} we see we need to consider invertibles within the class of elements with $a^{\tau}=a$.
To the resulting definition of rank we give a name that reflects the role of time reversal symmetry in motivating this work.

\begin{Def}
A unital $C^{*,\tau}$-algebra $A$ is said to have \emph{TR rank one} if the invertible elements of $A$ such that $a^{\tau}=a$ are dense in the set of elements of $A$ such that $a^{\tau}=a$. 
A non-unital $C^{*,\tau}$-algebra is said to have TR rank one when its unitization has TR rank one.
An $R^{*}$-algebra has TR rank one when its complexification has TR rank one.
\end{Def}

It is obvious that a direct sum of unital $C^{*,\tau}$-algebras has TR rank one if and only if each factor has TR rank one. Therefore there is no conflict in the first and second sentences of the definition.

We give a few examples of TR rank one algebras.

\begin{lemma}
The $C^{*,\tau}$-algebra $C(S^{1})$
with $f^{\tau} = f$  has TR rank one.
\end{lemma}
\begin{proof}
This is just the usual assertion that a function from the circle to the plane can be perturbed to miss the origin.
\end{proof}

\begin{lemma}
The $C^{*,\tau}$-algebra $C(S^{1}) \otimes M_{2}(\mathbb{C})$ with $f^{\tau}(x)=\left(f(x)\right)^{\sharp}$ has TR rank one.
\end{lemma}
\begin{proof}
The only two-by-two matrices that are self-dual are the scalar matrices, so this follows from the previous lemma.
\end{proof}

It is useful for us  that TR rank one is preserved when adding matrices. 

\begin{prop}[{See \cite[Proposition 3]{RobertsonStableRange}}]
Let $(A,\tau)$ be a $C^{*,\tau}$-algebra with identity. 
If $(A,\tau)$ has TR rank one then $(M_{n}(A),\tau \otimes T)$ has TR rank one.
\end{prop}
\begin{proof}
As an induction hypothesis assume that $M_{n}(A)$ has TR rank one (it is the assumption of the proposition that the basis of the induction is true).
Suppose we have $X=X^{\tau \otimes T}$ in $M_{n+1}(A)$.
In block form
\[
	X = \begin{pmatrix} a & B \\ B^{\tau T} & D \end{pmatrix},
\]
where $a^{\tau}=a$ in $A$ and $D^{\tau}=D$ in $M_{n}(D)$.
Here the meaning of $B^{\tau T}$ is to apply $\tau$ componentwise and then take matrix transpose. 
Let 
\[
	X_0 = \begin{pmatrix} a_{0} & B \\ B^{\tau T} & D \end{pmatrix},
\]
with $a_{0}^{\tau}=a_{0}$ being invertible and close to $a$.
Since $(a_{0}^{-1})^{\tau}=a_{0}^{-1}$ we can find an $E \in M_n(A)$ arbitrarily close to $D-B^{\tau T}a_{0}^{-1}B$ that is invertible and with $E^{\tau}=E$.
Let $D_{0}=E+B^{\tau T}a_{0}^{-1}B$ and notice that
\[
	X_1 = \begin{pmatrix} a_{0} & B \\ B^{\tau T} & D_{0} \end{pmatrix}
\]
will be close to $X$ and invariant under $\tau \otimes T$.
We are done if we show that $X_1$ is invertible. 
To this end note that 
\[
	\begin{pmatrix} 1 & -a_{0}^{-1}B \\ 0 & 1 \end{pmatrix} \quad \text{ and } \quad \begin{pmatrix} a_{0} & 0 \\
0 & E \end{pmatrix}
\]
are invertible. 
Since 
\[
	\begin{pmatrix} 1 & -a_{0}^{-1}B \\ 0 & 1 \end{pmatrix}^{\tau \otimes T} X_1 \begin{pmatrix} 1 & -a_{0}^{-1}B \\ 0 & 1\end{pmatrix} = \begin{pmatrix} a_{0} & 0 \\ 0 & E \end{pmatrix},
\]
we can write $X_1$ as a product of invertibles. 
Hence it is invertible. 
\end{proof}

Although we have no need for it here we expand slightly on how TR rank one behaves with respect to matrices. 

\begin{lemma}[{see \cite[lemma 3.4]{Rieffel_tsr}}]
Let $(A,\tau)$ be a unital $C^{*,\tau}$-algebra and let $n \in \N$. 
If $(M_{n+1}(A), \tau \otimes T)$ has TR rank one, then $(A,\tau)$ has TR rank one. 
\end{lemma}
\begin{proof}
Let a self-$\tau$ element $t \in A$ and an $0 < \e < 1$ be given. 
Define 
\[
	X = \begin{pmatrix}t & 0 \\ 0 & I_n \end{pmatrix} \in M_{n+1}(A),
\]
where $I_n$ is the identity element in $M_n(A)$. 
By assumption we can find an invertible $Y \in M_{n+1}(A)$ within $\e$ of $X$ and such that $Y = Y^{\tau \otimes T}$. 
In block form $Y$ looks like
\[
	Y = \begin{pmatrix}a & B \\ B^{\tau T} & D \end{pmatrix}
\]
Notice that $a^\tau = a$ and $D^{\tau \otimes T} = D$.
Computing as in \cite{Rieffel_tsr} we see that $D$ and $t_0 = a - BD^{-1}B^{\tau T}$ are invertible and that $t_0$ is within $\e(1-\e)^{-1}$ of $t$. 
Hence we are done if we can show that $t_0^{\tau} = t_0$. 
Since $a$ is self-$\tau$ it suffices to show that $BD^{-1}B^{\tau T}$ is self-$\tau$.
For this write $D^{-1} = (d_{ij})$ and $B = (b_1 b_2 \ldots b_n)$.
The symmetry on $D^{-1}$ translates to $d_{ij}^{\tau} = d_{ji}$. 
We compute 
\begin{align*}
	(BD^{-1}B^{\tau T})^\tau & = \left( \sum_{i,j=1}^n b_i d_{ij} b_j^\tau \right)^\tau = \sum_{i,j}^n b_j d_{ij}^\tau b_i^\tau \\
					 & = \sum_{i,j}^n b_j d_{ji} b_i^\tau = BD^{-1}B^{\tau T}. \qedhere
\end{align*}
\end{proof}

Combining the two previous results we get:

\begin{thm}
Let $(A,\tau)$ be a unital $C^{*,\tau}$-algebra. 
The following are equivalent:
\begin{enumerate}
	\item $(A,\tau)$ has TR rank one.
	\item $(M_n(A),\tau \otimes T)$ has TR rank one for all $n \in \N$.
	\item $(M_n(A),\tau \otimes T)$ has TR rank one for some $n \in \N$.
\end{enumerate}
\end{thm}

\begin{kor} \label{TRLoopSpaces}
Both $C(S^{1},M_{n}(\mathbb{C}))$ with $\tau$-operation
\[
f^{\tau}(x)=\left(f(x)\right)^{T}
\]
 and $C(S^{1},M_{2N}(\mathbb{C}))$ with $\tau$-operation
\[
f^{\tau}(x)=\left(f(x)\right)^{\sharp}
\]
have TR rank one.
\end{kor}

\begin{rem}
We could have added the $\tau$-operation that exchanges two summands that are each $C(S^{1},M_{n}(\mathbb{C}))$.
In terms of the underlying $R^{*}$-algebras, we have thus the fact that
\[
C\left(S^{1},M_{n}(\mathbb{R})\right),\ C\left(S^{1},M_{n}(\mathbb{C})\right),\ C\left(S^{1},M_{n}(\mathbb{H})\right)
\]
all have TR rank one.
\end{rem}

\subsection{Generalizing the Friis-R{\o}rdam Theorem}

In the terms of this section, Lemma \ref{stblMatrix} just says that $(M_n(\C), T)$ and $(M_{2n}(\C), \sharp)$ have TR rank one. 
All of section \ref{approxNormalSec} was built on that lemma and the structured polar decomposition. 
Hence one can redo the work starting with the assumption of TR rank 1 instead of Lemma \ref{stblMatrix}.
That work yields the following.

\begin{prop} \label{TRGrid}
Let $(A_n, \tau_n)$ be a sequence of unital TR rank one $C^{*,\tau}$-algebras. 
Define $(M,\tau) = (\prod A_n, \prod \tau_n)$ and $A = \bigoplus A_n$.
If $x \in (M/A,\tau)$ is normal and self-$\tau$ then for every $\e > 0$ there is a normal self-$\tau$ element $y \in (M/A,\tau)$ with $\sigma(y) \subseteq \Gamma_\e$, and such that $\| x - y \| < \e$, where 
\[
	\Gamma_\e = \{ z \in \C \mid \Re(z) \in \e\Z \text{ or } \Im(z) \in \e\Z \}.
\]  
\end{prop}

With that at hand, we get a much more general theorem. 

\begin{thm} \label{biggerThm}
For all $\e > 0$ there exists a $\delta > 0$ such that for all $C^{*,\tau}$-algebras of TR rank one the following holds: Whenever $a,b$ are in $A$ with $a^* = a = a^\tau$, $b^* = b = b^\tau$ and $\|ab - ba\| < \delta$ there exists $a',b'$ in $A$ such that $a'^* = a' = a'^\tau$, $b'^* = b' = b'^\tau$, $a'b' = b'a'$, and
\[
	\|a - a'\|, \|b - b'\| < \e.
\]
\end{thm}
\begin{proof}
Just as it was for matrices, it suffices to prove that a self-$\tau$ almost normal element is close to a self-$\tau$ normal element. 

We reduce to the case where $A$ is unital. 
To do this, let an element $x$ in a non-unital $C^{*,\tau}$-algebra $B$ be given. 
Suppose we can find a normal and self-$\tau$ element $y \in \tilde{B}$ that is close to $x$, say $\| y - x \| < \eta$.
Write $y = \alpha 1 + z$ with $\alpha \in \C$ and $z \in B$. 
Now we notice that $z$ must be normal and self-$\tau$.
Further we have that $\| y - z \| = |\alpha| < \eta$. 
Hence $z$ is a normal and self-$\tau$ element that is at most $2 \eta$ away from $x$.

It remains to prove the theorem for $A$ unital. 
The proof of this is just as the proof for matrices given when we proved Theorem \ref{normalLin}, only we should reference Proposition \ref{TRGrid} instead of Proposition \ref{normalGridSpectrum}.
\end{proof}

As an application, we get nice perturbation properties for paths of matrices. 

\begin{thm} \label{realLinPaths}
For all $\e > 0$ there exists a $\delta > 0$ such that for all $n \in \N$ the following holds:
Whenever $A_t,B_t$ are two closed paths of $n$-by-$n$, contractive, self-adjoint, real (resp. self-dual) matrices such
that $\|A_t B_t - B_t A_t\| < \delta$ for all $t$ there exists closed paths of $n$-by-$n$,  self-adjoint, real (resp. self-dual) matrices $A'_t,B'_t$ such that $A'_tB'_t = B'_tA_t'$ for all $t$ and
\[
	\|A_t - A_t'\|, \|B_t - B_t'\| < \e
\]
for all $t$.
\end{thm}
\begin{proof}
Combine Corollary \ref{TRLoopSpaces} with Theorem \ref{biggerThm}.
\end{proof}

\begin{rem}
Theorem \ref{realLinPaths} is also true for paths that need not be closed.
\end{rem}



\providecommand{\bysame}{\leavevmode\hbox to3em{\hrulefill}\thinspace}
\providecommand{\MR}{\relax\ifhmode\unskip\space\fi MR }
\providecommand{\MRhref}[2]{%
  \href{http://www.ams.org/mathscinet-getitem?mr=#1}{#2}
}
\providecommand{\href}[2]{#2}


\begin{thebibliography}{10}

\bibitem{altland1997nonstandard}
Alexander Altland and Martin~R Zirnbauer, \emph{Nonstandard symmetry classes in
  mesoscopic normal-superconducting hybrid structures}, Physical Review B
  \textbf{55} (1997), no.~2, 1142.

\bibitem{atiyah66reality}
M.~F. Atiyah, \emph{{$K$}-theory and reality}, Quart. J. Math. Oxford Ser. (2)
  \textbf{17} (1966), 367--386. \MR{0206940 (34 \#6756)}

\bibitem{bhatia2007spectral}
Rajendra Bhatia, \emph{Spectral variation, normal matrices, and finsler
  geometry}, Mathematical Intelligencer \textbf{29} (2007), no.~3, 41--46.

\bibitem{blackadar1985shape}
Bruce Blackadar, \emph{Shape theory for {$C^\ast$}-algebras}, Math. Scand.
  \textbf{56} (1985), no.~2, 249--275. \MR{813640 (87b:46074)}

\bibitem{blackadar2004semiprojectivity}
\bysame, \emph{Semiprojectivity in simple {$C^*$}-algebras}, Operator algebras
  and applications, Adv. Stud. Pure Math., vol.~38, Math. Soc. Japan, Tokyo,
  2004, pp.~1--17. \MR{2059799 (2005g:46101)}

\bibitem{BoersemaUnitedKtheory}
Jeffrey~L. Boersema, \emph{Real {$C^*$}-algebras, united {$K$}-theory, and the
  {K}\"unneth formula}, $K$-Theory \textbf{26} (2002), no.~4, 345--402.
  \MR{1935138 (2004b:46072)}

\bibitem{BrownPedersenRRzero}
Lawrence~G. Brown and Gert~K. Pedersen, \emph{{$C^*$}-algebras of real rank
  zero}, J. Funct. Anal. \textbf{99} (1991), no.~1, 131--149. \MR{1120918
  (92m:46086)}

\bibitem{cardoso1998blind}
J.F. Cardoso, \emph{Blind signal separation: statistical principles},
  Proceedings of the IEEE \textbf{86} (1998), no.~10, 2009--2025.

\bibitem{DadarlatShapeAndE}
Marius D{\u{a}}d{\u{a}}rlat, \emph{Shape theory and asymptotic morphisms for
  {$C^*$}-algebras}, Duke Math. J. \textbf{73} (1994), no.~3, 687--711.
  \MR{1262931 (95c:46117)}

\bibitem{circleHom}
Alessandro Derango, \emph{C*-algebras associated with homeomorphisms of the
  unit circle}, ProQuest LLC, Ann Arbor, MI, 2000, Thesis (Ph.D.)--University
  of Toronto (Canada). \MR{2701546}

\bibitem{dyson1962threefold}
Freeman~J Dyson, \emph{The threefold way. algebraic structure of symmetry
  groups and ensembles in quantum mechanics}, Journal of Mathematical Physics
  \textbf{3} (1962), no.~1199.

\bibitem{EffrosKaminkerShape}
E.~G. Effros and J.~Kaminker, \emph{Homotopy continuity and shape theory for
  {$C^\ast$}-algebras}, Geometric methods in operator algebras ({K}yoto, 1983),
  Pitman Res. Notes Math. Ser., vol. 123, Longman Sci. Tech., Harlow, 1986,
  pp.~152--180. \MR{866493 (88a:46082)}

\bibitem{ELPBusby}
S{\o}ren Eilers, Terry~A. Loring, and Gert~K. Pedersen, \emph{Morphisms of
  extensions of {$C^*$}-algebras: pushing forward the {B}usby invariant}, Adv.
  Math. \textbf{147} (1999), no.~1, 74--109. \MR{1725815 (2000j:46104)}

\bibitem{glashoff2012multimodal}
Davide Eynard, Klaus Glashoff, Michael~M Bronstein, and Alexander~M Bronstein,
  \emph{Multimodal diffusion geometry by joint diagonalization of laplacians},
  arXiv preprint arXiv:1209.2295 (2012).

\bibitem{freed2012twisted}
Daniel~S Freed and Gregory~W Moore, \emph{Twisted equivariant matter}, Annales
  Henri Poincar{\'e}, Springer, pp.~1--97.

\bibitem{friisRordam}
Peter Friis and Mikael R{\o}rdam, \emph{Almost commuting self-adjoint
  matrices---a short proof of {H}uaxin {L}in's theorem}, J. Reine Angew. Math.
  \textbf{479} (1996), 121--131. \MR{1414391 (97i:46097)}

\bibitem{glashoff2013almost}
Klaus Glashoff and Michael~M Bronstein, \emph{Matrix commutators: their
  asymptotic metric properties and relation to approximate joint
  diagonalization}, www.cs.technion.ac.il/~mbron/publications.html, 2013.

\bibitem{goodearlBook}
K.~R. Goodearl, \emph{Notes on real and complex {$C^{\ast} $}-algebras}, Shiva
  Mathematics Series, vol.~5, Shiva Publishing Ltd., Nantwich, 1982. \MR{677280
  (85d:46079)}

\bibitem{gygi2003computation}
F.~Gygi, J.L. Fattebert, and E.~Schwegler, \emph{Computation of maximally
  localized wannier functions using a simultaneous diagonalization algorithm},
  Computer physics communications \textbf{155} (2003), no.~1, 1--6.

\bibitem{hastings2008topology}
M.~B. Hastings, \emph{Topology and phases in fermionic systems}, J. Stat. Mech.
  Theory Exp. (2008), no.~1, L01001, 8. \MR{2373523 (2009d:81434)}

\bibitem{hastings2009making}
\bysame, \emph{Making almost commuting matrices commute}, Comm. Math. Phys.
  \textbf{291} (2009), no.~2, 321--345. \MR{2530163 (2010m:15032)}

\bibitem{HastingsLoringWannier}
M.B. Hastings and T.A. Loring, \emph{Almost commuting matrices, localized
  wannier functions, and the quantum hall effect}, Journal of Mathematical
  Physics \textbf{51} (2010), 015214.

\bibitem{HastLorTheoryPractice}
\bysame, \emph{Topological insulators and {$C^*$}-algebras: {T}heory and
  numerical practice}, Arxiv preprint arXiv:1012.1019 (2010).

\bibitem{kasparov1980}
G.~G. Kasparov, \emph{Hilbert {$C^{\ast} $}-modules: theorems of {S}tinespring
  and {V}oiculescu}, J. Operator Theory \textbf{4} (1980), no.~1, 133--150.
  \MR{587371 (82b:46074)}

\bibitem{kovnatsky2013coupled}
Artiom Kovnatsky, Michael~M Bronstein, Alexander~M Bronstein, Klaus Glashoff,
  and Ron Kimmel, \emph{Coupled quasi-harmonic bases}, Computer Graphics Forum,
  vol.~32, Wiley Online Library, 2013, pp.~439--448.

\bibitem{liBook}
Bingren Li, \emph{Real operator algebras}, World Scientific Publishing Co.
  Inc., River Edge, NJ, 2003. \MR{1995682 (2004k:46100)}

\bibitem{LinAlmostCommutingHermitian}
Huaxin Lin, \emph{Almost commuting selfadjoint matrices and applications},
  Operator algebras and their applications ({W}aterloo, {ON}, 1994/1995),
  Fields Inst. Commun., vol.~13, Amer. Math. Soc., Providence, RI, 1997,
  pp.~193--233. \MR{1424963 (98c:46121)}

\bibitem{lindner2011floquet}
Netanel~H Lindner, Gil Refael, and Victor Galitski, \emph{Floquet topological
  insulator in semiconductor quantum wells}, Nature Physics \textbf{7} (2011),
  no.~6, 490--495.

\bibitem{LoringHastingsEPL}
T.A. Loring and M.B. Hastings, \emph{Disordered topological insulators via
  {$C^*$}-algebras}, EPL (Europhysics Letters) \textbf{92} (2010), 67004.

\bibitem{LoringLogOfUnitary}
Terry~A. Loring, \emph{Computing a logarithm of a unitary matrix with general
  spectrum}, arxiv:1203.6151.

\bibitem{LoringQuantKth}
\bysame, \emph{Quantitative $k$-theory and spin chern numbers},
  arxiv:1302.0349.

\bibitem{loring96graph}
\bysame, \emph{Stable relations. {II}. {C}orona semiprojectivity and
  dimension-drop {$C^*$}-algebras}, Pacific J. Math. \textbf{172} (1996),
  no.~2, 461--475. \MR{1386627 (97c:46070)}

\bibitem{loringBook}
\bysame, \emph{Lifting solutions to perturbing problems in {$C^*$}-algebras},
  Fields Institute Monographs, vol.~8, American Mathematical Society,
  Providence, RI, 1997. \MR{1420863 (98a:46090)}

\bibitem{LoringQuaternions}
\bysame, \emph{Factorization of matrices of quaternions}, Exposition. Math.
  \textbf{30} (2012), no.~3, 250--267.

\bibitem{LorSorensenTorus}
Terry~A. Loring and Adam P.~W. S{\o}rensen, \emph{Almost commuting unitary
  matrices related to time reversal}, Comm. Math. Phys. (to appear),
  arxiv:1107.4187.

\bibitem{may99stable}
J.~P. May, \emph{Stable algebraic topology, 1945--1966}, History of topology,
  North-Holland, Amsterdam, 1999, pp.~665--723. \MR{1721119 (2000m:55002)}

\bibitem{ogata2013approximating}
Yoshiko Ogata, \emph{Approximating macroscopic observables in quantum spin
  systems with commuting matrices}, Journal of Functional Analysis (2013).

\bibitem{PalmerRealCstar}
T.~W. Palmer, \emph{Real {$C\sp*$}-algebras}, Pacific J. Math. \textbf{35}
  (1970), 195--204. \MR{0270162 (42 \#5055)}

\bibitem{qi2010quantum}
Xiao-Liang Qi and Shou-Cheng Zhang, \emph{The quantum spin hall effect and
  topological insulators}, Physics Today \textbf{63} (2010), no.~1, 33--38.

\bibitem{Rieffel_tsr}
Marc~A. Rieffel, \emph{Dimension and stable rank in the {$K$}-theory of
  {$C^{\ast}$}-algebras}, Proc. London Math. Soc. (3) \textbf{46} (1983),
  no.~2, 301--333. \MR{693043 (84g:46085)}

\bibitem{RobertsonStableRange}
A.~Guyan Robertson, \emph{Stable range in {$C^{\ast} $}-algebras}, Math. Proc.
  Cambridge Philos. Soc. \textbf{87} (1980), no.~3, 413--418. \MR{556921
  (82h:46079)}

\bibitem{StaceyRealForUHF}
P.~J. Stacey, \emph{Stability of involutory {$*$}-antiautomorphisms in {UHF}
  algebras}, J. Operator Theory \textbf{24} (1990), no.~1, 57--74. \MR{1086544
  (92g:46077)}

\bibitem{StaceyRealInfinite}
\bysame, \emph{Real structure in purely infinite {$C^*$}-algebras}, J. Operator
  Theory \textbf{49} (2003), no.~1, 77--84. \MR{1978322 (2004c:46111)}

\bibitem{vonNeumannQuantumErgodic}
J.~von Neumann, \emph{Beweis des ergodensatzes und des {$H$}-theorems in der
  neuen mechanik}, Zeitschrift f\:ur Physik A Hadrons and Nuclei \textbf{57}
  (1929), 30--70, 10.1007/BF01339852.

\bibitem{Weigand2012oneDim}
Daniel Weigand, \emph{Entanglement entropy in 1d noninteracting fermionic
  systems)}, Master's thesis, 2012,
  http://www.physik.rwth-aachen.de/institute/institut-fuer-quanteninformation/student-projects/past-theses/.

\bibitem{zabrodin2006matrix}
A.~Zabrodin, \emph{Matrix models and growth processes: from viscous flows to
  the quantum {H}all effect}, Applications of random matrices in physics, NATO
  Sci. Ser. II Math. Phys. Chem., vol. 221, Springer, Dordrecht, 2006,
  pp.~261--318. \MR{2232116 (2007d:82081)}

\end{thebibliography}
\end{document}